\documentclass[11pt]{amsart}

\usepackage{amsthm,amssymb,amsmath,epsfig,graphics,appendix,bm, mathrsfs,bbm}
\usepackage[all]{xy}
\usepackage{dsfont}

\usepackage{xcolor}
\usepackage[T1]{fontenc}
\usepackage{wasysym}
\usepackage{ulem}
\usepackage{stmaryrd} 
\usepackage[left=1in, right=1in, top=1.1in,bottom=1.1in]{geometry}
\usepackage{enumitem}
\usepackage{hyperref}
\hypersetup{
     colorlinks   = true,
     citecolor    = blue,
     linkcolor=blue
}
\usepackage{cancel}
\usepackage{comment}
\usepackage{tikz}

\usepackage{csquotes}
\usepackage{graphicx}
\usepackage{pstricks}
\usepackage{lmodern}
\newfam\msbmfam\font\tenmsbm=msbm10\textfont
\msbmfam=\tenmsbm\font\sevenmsbm=msbm7
\scriptfont\msbmfam=\sevenmsbm
\def\NN{\mathbb N}
\def\d1{\mathds 1}
\def\ZZ{\mathbb Z}
\def\WW{\mathbb W}

\def \RR{\mathbb R}

\def\EE{{\bb E}}
\def\EE{{\mathbb E}}

\def\de{{\delta}}

\def\ga{{\gamma}}\def\de{{\delta}}

\numberwithin{equation}{section}

\newtheorem{thm}{Theorem}[section]
\newtheorem{lemma}[thm]{Lemma}

\newtheorem{Def}[thm]{Definition}

\newtheorem{corollary}[thm]{Corollary}
\newtheorem{remark}[thm]{Remark}
\newtheorem{example}[thm]{Example}

\renewcommand{\theequation}{\arabic{section}.\arabic{equation}}

\def\ZZ{{\mathbb Z}}
\def\XX{{\mathbb X}}
\def\PP{{\mathbb P}}

\def\Ad2{{| A-\widetilde A |_{L^{2}_{loc}} }}

 \renewcommand{\theequation}{\arabic{section}.\arabic{equation}}
\renewcommand{\theequation}{\arabic{section}.\arabic{equation}}
\newcommand{\D}{{\mathrm d}}

\newcommand{\nc}{\newcommand}
\nc{\bi}{\bibitem}

\begin{document}
\title[Normal Approximation for the polynomial Functionals of  Random Walk]{Normal Approximation for the polynomial Functionals of correlated random field sampling along random walk path in dimension $1+1$}
\author[Ao Huang]{Ao Huang}
\address{School of Mathematical Sciences, Zhejiang University Hangzhou 310058, China}
\email{12435025@zju.edu.cn}
\author[G. Rang] {
Guanglin  Rang}
\address{School of Mathematics and Statistics, Wuhan University, Wuhan 430072,China;
Computational Science Hubei Key Laboratory, Wuhan University, Wuhan, 430072, China} 
\email{glrang.math@whu.edu.cn}
\author[Z. Su]{Zhonggen Su}
\address{School of Mathematical Sciences, Zhejiang University Hangzhou 310058, China}
\email{suzhonggen@zju.edu.cn}

\date{}
\maketitle

\begin{abstract}
Let $\xi$ be the stationary occupation field generated by a Poisson system of independent simple symmetric random walks on $\ZZ$ in
space--time dimension $1+1$. For a finite set $A\subset\ZZ$, we consider the classical fixed-region observables
$W_N(A)$, the cumulative occupation of $A$ up to time $N$, and
$D_N(A)$, the number of distinct particles visiting $A$ up to time $N$. We prove quantitative central limit theorems for both observables, with Wasserstein rate of order
$N^{-1/4}$.

In addition, we introduce an independent nearest-neighbour
random walk $S=(S_n,\,n\ge 0)$ on $\ZZ$ with non-zero drift and sample the field along this ballistic
path. For a fixed polynomial observable $\varphi(x)=\sum_{j=0}^k \beta_j x^j, \beta_k\neq 0$, of degree $k\in \NN$, we consider the partial sums $Y_{N,\varphi}=\sum_{n=1}^N \varphi(\xi(n,S_n)).$
We prove a Wasserstein bound of order $N^{-1/2}$ for the normal approximation of the standardized
$Y_{N,\varphi}$. To the best of our knowledge, this is the first quantitative normal approximation result
for polynomial functionals of the Poisson occupation field sampled along a random walk path.
The drift induces an effective decorrelation of the sampled environment, leading to a substantial
improvement over fixed-region sampling. The proofs rely on a representation of $\xi$ as a
Poisson functional on path space and on the Malliavin--Stein method for Poisson functionals.

\medskip
\noindent{\bf Key words:} dynamic random environment; Malliavin--Stein method; occupation field; Poisson point process; Poisson random walks; quantitative CLT.

\smallskip
\noindent{\bf MSC2020 subject classification:} Primary 60F05; Secondary 60G55, 60H07, 60J10, 60K35.
\end{abstract}
%\end{frontmatter}

%\tableofcontents
\section{Introduction and main results}\label{sec:introduction}
{\bf Background.} We study quantitative normal approximations for functionals of a strongly correlated random field
in space--time dimension $1+1$. The field arises as the occupation field of a Poisson system of
independent simple symmetric random walks (SSRW) on $\ZZ$, started from i.i.d.\ Poisson initial particles.
Such Poisson systems go back at least to the work of Derman \cite{Derman1955SomeCT},
and have since appeared in a variety of contexts as one of the simplest
examples of infinite particle systems; see, e.g.,
Liggett \cite{liggett2013stochastic} and Kipnis--Landim \cite{kipnis2013scaling} for background.

Besides being a natural Markovian model with an explicit invariant measure, this occupation field
has been used as a dynamic random environment in a number of works; see, e.g.,
\cite{hilario2015random,drewitz2011survival,gartner2006intermittency,Kesten20052402,MR1027821}.
In particular, it has served as a random potential or catalyst through which other particles move;
see, e.g., Shen--Song--Sun--Xu~\cite{SHEN2021109066} and the references therein.
We now turn to the discrete-time model investigated in this paper.

{\bf Model.} Throughout the paper we write $\NN_0 = \{0,1,2,\dots\}$, $\NN = \{1,2,3,\dots \}$ and let $\ZZ$ denote the set of all integers. Let $Z\sim \mathcal N(0,1)$ be a standard Gaussian random variable. We now recall the discrete-time model used throughout the paper.
The following definition is adapted from~\cite{MR753813,SHEN2021109066}.

\begin{Def}[Poisson field of independent walks]\label{def:pfiw}
At time $n=0$, we start with $\xi(0,x)$ particles at each site
$x\in\ZZ$, where the family $\{\xi(0,x),\,x\in\ZZ\}$ consists of
independent Poisson random variables with mean $\lambda>0$.
Each particle then performs an independent simple symmetric
random walk on $\ZZ$.
We denote by $\xi(n,x)$ the number of particles at position $x$
and time $n\in\NN_0$.
More precisely,
\begin{equation}\label{xi}
  \xi(n,x)
  =\sum_{y\in\ZZ}\sum_{i=1}^{\xi(0,y)}
    \mathbf{1}_{\{X_n^{y,i}=x\}},\qquad n\in\NN_0,\ x\in\ZZ,
\end{equation}
where $X^{y,i}=(X^{y,i}_n,\,n\in\NN_0)$ is the $i$-th random walk starting from $y$ at time~$0$.
\end{Def}

It is well known (see, for example,~\cite{kipnis2013scaling,Derman1955SomeCT})
that this infinite particle system admits the i.i.d.\ product Poisson
measure with mean $\lambda$ as an invariant and ergodic measure, so that
for each fixed $n\in\NN_0$ the random variables $\{\xi(n,x),\,x\in\ZZ\}$
are again i.i.d.\ Poisson with mean $\lambda$.
In particular, the field
$\{\xi(n,x)\colon n\in\NN_0,\,x\in\ZZ\}$ is stationary in time and homogeneous in space. On the other hand, it exhibits strong correlation in the time
direction: the value of $\xi(n,x)$ depends on the common history of many
particles and is therefore highly dependent across different $n$.
This strong space–time dependence is one of the main difficulties in
establishing quantitative limit theorems.

In two earlier papers, Port~\cite{port1966system,port1967equilibrium} investigated the corresponding Poisson system of independent Markov chains and introduced, for a finite set $A\subset\ZZ$, two classical fixed-region observables. To make this more precise, let $A\subset\ZZ$ be a finite and nonempty set and define
\[
  \xi(n,A):=\sum_{x\in A}\xi(n,x),\qquad n\in\NN_0,
\]
the total number of particles in $A$ at time $n$.
In our notation, his cumulative occupation functional is defined by
\[
  W_N(A):=\sum_{n=1}^N \xi(n,A)
  =\sum_{n=1}^N\sum_{x\in A}\xi(n,x),\qquad N\in\NN.
\]
He also considered the number of distinct
particles that visit $A$ up to time $N$,
\[
  D_N(A):
  =\sum_{y\in\ZZ}\sum_{i=1}^{\xi(0,y)}
    \mathbf{1}_{\{\exists\, 1 \le m\le N:\ X_m^{y,i}\in A\}}.
\]
Port proved strong laws of large numbers and central limit theorems for $W_N(A)$ and 
$D_N(A)$. Following Port’s work, Cox and Griffeath~\cite{MR1027821} studied continuous-time analogues
and established large deviation principles for occupation-time type functionals of Poisson systems
of independent random walks on $\ZZ^d$. However, these results are essentially qualitative and do not provide explicit rates of convergence.
Moreover, the observables considered there involve only finitely many spatial sites.

{\bf Main results.} In the present work we revisit the same Poisson field and focus on
normal approximation with explicit rates for functionals that depend on a
much larger number of space–time points, including functionals sampled
along an independent random walk path.

Our first main result gives Wasserstein bounds for the classical fixed-region observables
$W_N(A)$ and $D_N(A)$. We write $d_W$ for the Wasserstein distance; see
Section~\ref{sec:MSB} for the definition.
\begin{thm}[Fixed-region occupation and range-type functionals]\label{thm:WN}
Let
\[
  F_N^{(A)}=\frac{W_N(A)-\EE W_N(A)}
  {\sqrt{\operatorname{Var}(W_N(A))}},
  \qquad
  G_N^{(A)}=\frac{D_N(A)-\EE D_N(A)}
  {\sqrt{\operatorname{Var}(D_N(A))}}.
\]
Then there exists a constant $C_1,C_2>0$, depending only on $\lambda$ and $A$, such that:
\begin{enumerate}[label=(\alph*),leftmargin=2em]
  \item 
  \[
    d_W\bigl(F_N^{(A)},Z\bigr)\le C_1\,N^{-1/4},\qquad N\ge1.
  \]

  \item 
  \[
    d_W\bigl(G_N^{(A)},Z\bigr)\le C_2\,N^{-1/4},\qquad N\ge1.
  \]
\end{enumerate}
\end{thm}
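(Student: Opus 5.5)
We represent $\xi$ as a Poisson functional on path space. Let $\eta$ be a Poisson point process on $E=\ZZ\times\Omega$, where $\Omega=\ZZ^{\NN_0}$ is the space of nearest-neighbour paths. Its intensity is $\mu(\{y\}\times d\omega)=\lambda P_y(d\omega)$, with $P_y$ the law of SSRW started at $y$. By the marking theorem, $\eta$ has the same law as the collection of particle trajectories in Definition~\ref{def:pfiw}. Both observables are then \emph{linear} functionals of $\eta$:
\[
W_N(A)=\int f_N\,d\eta,\quad f_N(\omega)=\sum_{n=1}^N\mathbf 1_{\{\omega_n\in A\}},\qquad
D_N(A)=\int g_N\,d\eta,\quad g_N(\omega)=\mathbf 1_{\{\exists\,1\le m\le N:\ \omega_m\in A\}} .
\]
For a first-chaos functional $F=\int h\,d\eta$, the add-one cost is $D_xF=h(x)$ and $D^2F=0$. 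The Malliavin--Stein bound for Poisson functionals (Last--Peccati--Schulte, recalled in Section~\ref{sec:MSB}) therefore reduces to the standard bound
\[
d_W\Bigl(\tfrac{F-\EE F}{\sqrt{\operatorname{Var}F}},Z\Bigr)\le \frac{\int |h|^3\,d\mu}{(\operatorname{Var}F)^{3/2}},\qquad \operatorname{Var}F=\int h^2\,d\mu .
\]
This is Stein's method for compound-Poisson sums, and the proof reduces to estimating moments of $f_N$ and $g_N$ under $\mu$.

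For $W_N(A)$ we compute $\int f_N^2\,d\mu=\lambda\sum_y E_y[L_N(A)^2]$, where $L_N(A)$ is the occupation time of $A$ up to time $N$. Summing over the start point and using stationarity, $\sum_y P_y(\omega_n=x,\omega_m=x')=P_0(S_{|m-n|}=x'-x)$. Hence
\[
\operatorname{Var}W_N(A)=\lambda\sum_{n,m\le N}\sum_{x,x'\in A}p_{|m-n|}(x'-x)\asymp N^{3/2},
\]
by the local CLT $p_k(z)\asymp k^{-1/2}$ (parity issues only change constants, since $p_k+p_{k+1}\asymp k^{-1/2}$). The lower bound needs some care but is straightforward. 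For the third moment, $\sum_y E_y[L_N^3]$ is a sum over $n_1\le n_2\le n_3$ of products $p_{n_2-n_1}p_{n_3-n_2}$, i.e.\ $N\cdot(\sum_{k\le N}k^{-1/2})^2\asymp N^2$. The ratio is then $N^2/N^{9/4}=N^{-1/4}$, which proves (a).

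For $D_N(A)$ we have $g_N^3=g_N^2=g_N$, so the bound becomes $(\int g_N\,d\mu)^{-1/2}$, and it suffices to show $\operatorname{Var}D_N(A)=\lambda\sum_y P_y(\tau_A\le N)\gtrsim N^{1/2}$, where $\tau_A$ is the hitting time of $A$ after time $1$. Every $y$ with $|y|\le\sqrt N$ hits $A$ before time $N$ with probability bounded below, by the invariance principle (or by the reflection principle $P_y(\tau_0\le N)\ge P(|S_N|\ge |y|)$). So $\int g_N\,d\mu\gtrsim\sqrt N$, which gives $d_W\le CN^{-1/4}$ and proves (b). The finite-$N$ regime is handled by enlarging constants, since $d_W\le$ const for normalized variables.

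The main technical point is the lower bound $\operatorname{Var}W_N(A)\gtrsim N^{3/2}$. I would handle it by keeping only the diagonal pairs $x=x'$ and using $\sum_{n,m\le N}p_{|m-n|}(0)\ge c\sum_{n}\sum_{k\le N/2,\,k\ \mathrm{even}}k^{-1/2}$, since every term is nonnegative. Also routine but necessary is justifying the path-space Poisson representation, so that the Malliavin--Stein bound applies to the possibly infinite integrals; this follows because $f_N$ is in $L^1\cap L^2(\mu)$, which the same computations show.
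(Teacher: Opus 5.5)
Your proposal is correct and is essentially the paper's proof: the same path-space Poisson representation, the same first-chaos reduction of the Malliavin--Stein bound to $\int|h|^3\,d\mu/\sigma^3$, and the same orders $N^{3/2}$, $N^2$, $N^{1/2}$. The only differences are that you prove just the variance lower bounds you need (diagonal even-gap terms for $W_N(A)$, a hitting bound from $|y|\lesssim\sqrt N$ for $D_N(A)$), whereas the paper derives sharp constants via the parity-corrected local CLT and $\EE[R_N]\sim 2\sqrt{2/\pi}\,N^{1/2}$; and, as a harmless slip, in discrete time $P_0(\max_{m\le N}S_m\ge u)=P(|S_N|\ge u)-P(S_N=u)$, so your reflection inequality should read $P_y(\tau_0\le N)\ge P(S_N\ge|y|)=\tfrac12 P(|S_N|\ge|y|)$, which still suffices.
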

\begin{remark}\label{rem:JMS}
After completing this work, we noticed that Jaramillo--Murillo-Salas~\cite{jaramillo2025asymptotics} develop a Stein--Mecke
approach for Poisson-driven particle systems and obtain quantitative Gaussian approximations for
\emph{occupation-time type additive functionals} of the form
\[
  A_r[\psi]=\int_0^r \langle \mu_t^\eta,\psi\rangle\,\mathrm dt,
  \qquad
  Z_r[\psi]=\frac{A_r[\psi]-\EE[A_r[\psi]]}{\sigma_r}.
\]
In particular, in the case of one-dimensional uniformly elliptic diffusions and $\psi\ge0$,
their Proposition~5.2 yields a Wasserstein bound of order $r^{-1/4}$ for $Z_r[\psi]$.
This is consistent with the rate $N^{-1/4}$ in Theorem~\ref{thm:WN}(a) for the discrete-time
occupation functional $W_N(A)$, which can be viewed as a lattice analogue of such occupation-time
additive functionals (formally corresponding to $\psi=\mathbf 1_A$ and $r\approx N$).

On the other hand, the framework in Jaramillo--Murillo-Salas~\cite{jaramillo2025asymptotics} is tailored to additive functionals
of the above time-integrated form and the corresponding moment bounds rely on conditional density
estimates for the underlying motion. The distinct-visitor count $D_N(A)$ in Theorem~\ref{thm:WN}(b) is a
hitting-type functional (it records whether a trajectory visits $A$ at least once up to time $N$)
and is not of the form $\int_0^r \langle \mu_t^\eta,\psi\rangle\,\mathrm dt$. Therefore, their results do not
directly apply to $D_N(A)$ without further extensions.
\end{remark}

In order to go beyond a fixed finite region $A$, we introduce an
independent biased (simple asymmetric) nearest--neighbour random walk
$S=(S_n,n\in\NN_0)$ on $\ZZ$, independent of the Poisson field $\xi$.
More precisely, $S_0=0$ and its increments $\Delta_n:=S_n-S_{n-1}$ are i.i.d. with
\[
\PP_S(\Delta_n=+1)=\mathfrak p,\qquad \PP_S(\Delta_n=-1)=\mathfrak q:=1-\mathfrak p,
\qquad \mathfrak p\in(0,1),\ \mathfrak p\neq\tfrac12,
\]
so that $S$ has non--zero drift $v:=\EE_S[\Delta_1]=\mathfrak p-\mathfrak q\neq0$ and is ballistic.
Sampling along this random path allows us to explore the random environment at
spatial locations that change with time and typically separate linearly in time.

Motivated in part by~\cite{SHEN2021109066}, we study polynomial functionals sampled along the
trajectory.
For a fixed polynomial observable
\[
  \varphi:\NN_0\to\RR,
  \qquad
  \varphi(x)=\sum_{j=0}^k \beta_jx^j,
  \qquad
  \beta_k\neq 0,
\]
of degree $k\in \NN$, we study the partial sums
\begin{equation}\label{eq:Yk-def-intro}
  Y_{N,\varphi}:=\sum_{n=1}^N \varphi(\xi(n,S_n)).
\end{equation}
From the perspective of random media, the case $\varphi(x)=x$ corresponds to the occupation time of a
particle moving through a dynamic random environment, while more
general polynomial observables naturally encode
nonlinear fluctuation statistics and appear, for instance, in perturbative expansions of partition
functions for polymers in such environments.

We denote by $(\Omega,\mathcal F,\PP)$ the underlying probability space
supporting the Poisson field $\xi$ and all random walks, and by $\EE$
the associated expectation, regardless of whether we consider only the
field $\xi$ or the joint randomness of $\xi$ and $S$; when necessary, a
subscript will indicate the corresponding probability or expectation. We write
\[
  P_n(x)=\PP_S(S_n=x),\qquad x\in\ZZ,\ n\in\NN_0,
\]
for its $n$-step transition probabilities.
More generally, if $S_0=y$,
\[
  \PP_S(S_n=x\mid S_0=y)=P_n(x-y),\qquad x,y\in\ZZ.
\]
Conditionally on the initial configuration $\{\xi(0,x),\,x\in\ZZ\}$, the
family of particle walks
\[
  \{X^{y,i}\colon y\in\ZZ,\ 1\le i\le \xi(0,y)\}
\]
is independent, and each $X^{y,i}=(X^{y,i}_n,n\in\NN_0)$ is a simple
symmetric random walk on $\ZZ$ starting from $y$.
We denote by
\[
  Q_n(x)=\PP\big(X^{0,i}_n = x \big),\qquad x\in\ZZ,\ n\in\NN_0,
\]
the $n$-step transition probabilities of the particle walks. Hence, for
general starting point $y$,
\[
  \PP\big(X^{y,i}_n = x \,\big|\, X^{y,i}_0 = y\big)
  = Q_n(x-y),\qquad x,y\in\ZZ.
\]
In our setting, $Q_n$ corresponds to the simple symmetric random walk kernel
governing the particle trajectories in the Poisson field, whereas $P_n$ is the
transition kernel of the biased sampling walk $S$.
We keep different symbols to emphasize that $S$ is independent of the Poisson
field $\xi$ and of all the particle walks $X^{y,i}$.

Our second main theorem concerns the functionals sampled along the random
walk path~\eqref{eq:Yk-def-intro}.

\begin{thm}[Annealed normal approximation for $Y_{N,\varphi}$ under drifted sampling]\label{thm:Yk-main}
Under drifted sampling $v \neq 0 $, for every fixed polynomial $\varphi$, define the normalized functional \[ F_{N,\varphi}:=\frac{Y_{N,\varphi}-\mathbb E[Y_{N,\varphi}]}{\sigma_{N,\varphi}},\qquad\sigma_{N,\varphi}^2:=\mathrm{Var}(Y_{N,\varphi}), \]
Then there exists a constant $C_{\varphi}\in(0,\infty)$,
depending on the fixed polynomial $\varphi$, such that for all $N\ge1$,
\[
  d_W\!\big(F_{N,\varphi},Z\big)\le C_{\varphi}\,N^{-1/2}.
\]
Moreover, there exists a constant $\mathfrak c_\varphi\in(0,\infty)$
such that
\[
  \sigma_{N,\varphi}^2=\mathfrak c_\varphi\,N+O(1),
  \qquad N\to\infty.
\]
More precisely,
\begin{equation}\label{eq:vphi-constant}
  \mathfrak c_\varphi
  =
  \sum_{q=1}^k c_{\varphi,q}^2\,q!\,\lambda^q
  \Bigl(1+2\sum_{t\ge1}a_t^{(q)}\Bigr),
  \qquad
  a_t^{(q)}:=\EE_S\!\big[Q_t(S_t)^q\big],
\end{equation}
where $(c_{\varphi,q})_{q\ge1}$ are the Poisson--Charlier coefficients of the centered
single-site observable
\[
  \varphi(\xi(n,x))-\EE[\varphi(\xi(n,x))],
\]
as in Lemma~\ref{lem:single-site-k-chaos}. 
\end{thm}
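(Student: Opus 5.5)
The approach is to treat $Y_{N,\varphi}$ as a Poisson functional of the marked point process $\eta$ on path space, with intensity $\lambda\,\#\otimes\mathbb P^{\mathrm{SSRW}}$, and to condition on the sampling walk $S$ where convenient. The field is then $\xi(n,x)=\eta(\{\omega:\omega_n=x\})$. The first step uses Lemma~\ref{lem:single-site-k-chaos} to expand the centered single-site observable in Poisson--Charlier polynomials:
\[
\varphi(\xi(n,x))-\EE\varphi(\xi(n,x))=\sum_{q=1}^k c_{\varphi,q}\,C_q(\xi(n,x)).
\]
Since $C_q(\eta(B))$ is the $q$-th multiple Wiener--It\^o integral of $\mathbf 1_B^{\otimes q}$, we get $Y_{N,\varphi}-\EE Y_{N,\varphi}=\sum_{q\le k}I_q(f_q)$ conditionally on $S$, with $f_q=c_{\varphi,q}\sum_{n\le N}\mathbf 1_{B_{n,S_n}}^{\otimes q}$ and $B_{n,x}=\{\omega_n=x\}$.

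The second step is the variance computation. The isometry gives $\EE[C_q(\xi(n,x))C_{q'}(\xi(m,y))]=\delta_{qq'}q!\,(\lambda\,\mathbb P(\omega_n=x,\omega_m=y))^q$. Taking the stationary initial law, $\mathbb P(\omega_n=x,\omega_m=y)=Q_{|m-n|}(y-x)$. Averaging over $S$, we have $S_m-S_n\overset{d}{=}S_{m-n}$, which produces $a^{(q)}_t=\EE_S Q_t(S_t)^q$. Adding the annealed contribution $\mathrm{Var}_S(\EE[Y\mid S])$, which vanishes because $\EE[\varphi(\xi(n,x))]$ does not depend on $x$, we obtain
\[
\sigma_{N,\varphi}^2=\sum_q c_{\varphi,q}^2 q!\lambda^q\Bigl(N+2\sum_{t=1}^{N-1}(N-t)a_t^{(q)}\Bigr).
\]
The drift is what makes this sum converge. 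By the local CLT and a Hoeffding bound on $S_t$, the probability $Q_t(S_t)$ is at most $e^{-ct}$ except on an event of probability $e^{-ct}$, so $a^{(q)}_t\le Ce^{-ct}$. Hence $\sum_t t\,a_t^{(q)}<\infty$, which gives $\sigma^2=\mathfrak c_\varphi N+O(1)$. Positivity of $\mathfrak c_\varphi$ holds because $a_t^{(q)}\ge0$ and $c_{\varphi,k}=\beta_k\ne0$.

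The third step is the Wasserstein bound. I would apply the second-order Poincaré inequality of Last--Peccati--Schulte (stated in Section~\ref{sec:MSB}) to the annealed functional. Here $S$ can be encoded as an independent Poisson (or i.i.d.) input, or the conditional bound can be applied and then averaged; the latter is valid since $\EE[Y\mid S]$ is constant. The bound involves terms $\gamma_1,\gamma_2,\gamma_3$ built from $D_\omega F$ and $D^2_{\omega,\omega'}F$. For a single path $\omega$, $D_\omega Y=\sum_n[\varphi(\xi(n,S_n)+1)-\varphi(\xi(n,S_n))]\mathbf 1\{\omega_n=S_n\}$, a polynomial of degree $k-1$ in the field summed over the meeting times of $\omega$ and $S$. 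Likewise $D^2_{\omega,\omega'}Y$ is summed over the common meeting times of $\omega$, $\omega'$ and $S$. All moments of $\xi$ are bounded, so Hölder reduces everything to counting expectations of products of indicators of meetings. The number of meetings $L(\omega)=\#\{n\le N:\omega_n=S_n\}$ has geometric tails uniformly in the starting point, again because the symmetric path and the drifted walk separate ballistically. The measure of paths meeting $S$ in $[1,N]$ is $O(N)$.

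With these facts, $\gamma_3\lesssim\sigma^{-3}\int\EE|D_\omega Y|^3\lambda(d\omega)=O(N\cdot N^{-3/2})$. In $\gamma_1,\gamma_2$ the main term is $\sigma^{-2}\bigl(\int(\int\EE[(D^2_{\omega_1,\omega_3})^2(D^2_{\omega_2,\omega_3})^2]^{1/2}\ldots)\bigr)^{1/2}$. This integral is controlled by triples of paths whose meeting times with $S$ are close, and the exponential decay of $a_t$-type quantities shows it is $O(N)$. Hence $\gamma_1,\gamma_2=O(N^{1/2}/N)=O(N^{-1/2})$. The main obstacle is this step: bounding the triple integrals uniformly in $N$ times $N$, which requires the decorrelation estimate $\sup_y\sum_{n,m}\mathbb P(\omega^y_n=S_n,\omega^y_m=S_m)\lesssim 1$, with exponential decay in $|n-m|$, together with the quenched randomness of $S$. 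I would first try to prove it through the coupling $\omega_n-S_n$, which is a random walk with drift $-v$, and then use its Green function bound.
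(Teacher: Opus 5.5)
Your conditional Poisson--Charlier expansion and your variance computation match the paper. Using a second-order Poincar\'e inequality conditionally on $S$, instead of the paper's product-formula and contraction estimates, is a legitimate way to handle the Poisson fluctuations. After conditioning on $S$ and using bounded conditional moments of $\xi(n,S_n)$, your $\gamma_1,\gamma_2,\gamma_3$ reduce to sums of products of intersection masses $\mu(\mathcal A_n\cap\mathcal A_m\cap\cdots)$. These are of the same type the paper bounds by $CN$ in Lemma~\ref{lem:Hrs-L2-Yk} and in the $\mathrm T_2^{(\varphi)}$ estimate.

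The gap is that you never control the randomness of the conditional variance. Given $S$, $F_{N,\varphi}$ is centred, but $\operatorname{Var}_\xi(F_{N,\varphi}\mid S)=V_S/\sigma_{N,\varphi}^2$. Here $V_S=\sum_q c_{\varphi,q}^2q!\lambda^q(N+2\Sigma^{(q)}_S)$ with $\Sigma^{(q)}_S=\sum_{n<m}Q_{m-n}(S_m-S_n)^q$, and this genuinely depends on $S$. The Last--Peccati--Schulte theorem assumes unit variance, and its $\gamma_1,\gamma_2$ essentially bound only $\EE_\xi|\Theta_{N,\varphi}-V_S/\sigma_{N,\varphi}^2|$. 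If you instead use the first-order bound actually stated in Section~\ref{sec:MSB}, which needs no unit-variance assumption, its first term still contains $\EE_S|1-V_S/\sigma_{N,\varphi}^2|$. If you standardize conditionally, you pay $\EE_S|\sqrt{V_S}/\sigma_{N,\varphi}-1|$ when passing back to $F_{N,\varphi}$. Your remark that $\EE[Y_{N,\varphi}\mid S]$ is constant handles centring only, not this term.

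To get the rate $N^{-1/2}$ you need $\operatorname{Var}_S(V_S)\le CN$, i.e.\ $\operatorname{Var}_S(\Sigma^{(q)}_S)\le CN$ for each $q$. The paper proves this in Lemmas~\ref{lem:VarSigma-casek} and~\ref{lem:VarVS-Yk}:
\begin{itemize}
\item $Q_t(S_{n+t}-S_n)^q$ is measurable with respect to the increments indexed by $\{n+1,\dots,n+t\}$;
\item hence at most $N(t+s)$ pairs of terms with gaps $t,s$ are correlated;
\item and $\sum_t t\,\EE_S[Q_t(S_t)^{2q}]^{1/2}<\infty$ by the drift.
\end{itemize}
This is a second, separate use of the drift. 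Lemma~\ref{lem:rank1-obstruction-app} shows it is exactly the estimate that stops closing under symmetric sampling, so it cannot be waved through.

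Encoding $S$ as an additional Poisson or i.i.d.\ input does not avoid the problem either. Changing a single increment $\Delta_i$ shifts $S_n$ for all $n\ge i$. The corresponding add-one cost is then the non-local sum $\sum_{n\ge i}[\varphi(\xi(n,S_n\pm2))-\varphi(\xi(n,S_n))]$, of typical size $\sqrt{N-i}$. The resulting third-moment term is of order $\sigma_{N,\varphi}^{-3}\sum_{i\le N}(N-i)^{3/2}\asymp N$, which does not vanish.
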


\begin{remark}\label{rem:rates}
\begin{enumerate}
\item \emph{Why fixed-region sampling is slower.}
The covariance identity \eqref{eq:covarianceidentity} implies that, for a fixed finite set $A\subset\ZZ$,
\[
  \operatorname{Cov}(\xi(n,A),\xi(n+t,A))
\]
decays at the same polynomial rate as $Q_t(0)$ in dimension one. Hence the correlations of
$(\xi(n,A))_{n\ge1}$ are not summable, and this long memory is reflected in the slower Wasserstein
rate $N^{-1/4}$ in Theorem~\ref{thm:WN}.

\item \emph{Why ballistic path-sampling recovers the classical $N^{-1/2}$ rate.}
Along the drifted walk $S$, the relevant correlation coefficients are
\[
  a_t^{(q)}=\EE_S[Q_t(S_t)^q].
\]
Since $|S_t|\approx |v|t$ with exponentially small deviations and $Q_t(x)$ satisfies a sub-Gaussian
heat-kernel bound, one has
\[
  a_t^{(q)}\le C\,t^{-q/2}e^{-ct},
\]
for suitable constants $c,C\in(0,\infty)$ depending on $q$ and the drift parameter. In
particular, $\sum_{t\ge1}a_t^{(q)}<\infty$ for every fixed $q$, which yields linear variance
growth $\sigma_{N,\varphi}^2\asymp N$ and ultimately the Wasserstein rate $N^{-1/2}$ in
Theorem~\ref{thm:Yk-main}.

\item \emph{On the coefficients $c_{\varphi,q}$.}
The constants $c_{\varphi,q}$ are deterministic and depend only on the fixed polynomial
$\varphi$ and on $\lambda$. They are the coefficients in the Poisson--Charlier expansion of the
centered single-site observable and can be computed from the orthogonal projection formula in
Lemma~\ref{lem:single-site-k-chaos}.

\item \emph{Annealed nature.}
Theorem~\ref{thm:Yk-main} is annealed, in the sense that it averages jointly over the Poisson field
and the sampling walk. 
\end{enumerate}
\end{remark}

Theorems~\ref{thm:WN}(a) and~\ref{thm:Yk-main} thus exhibit a clear dichotomy between fixed-region
sampling and ballistic sampling. While $W_N(A)$ retains the long temporal memory of the occupation
field in one dimension, sampling along a drifted path effectively decorrelates the environment and
yields the typical $N^{-1/2}$ Wasserstein rate for the entire family of polynomial observables
$Y_{N,\varphi}$.
To the best of our knowledge, these are the first quantitative normal
approximation results for such functionals of the Poisson field sampled
along a random walk path.

{\bf Proof strategy.} Our proofs are based on Malliavin calculus on Poisson space combined with
Stein's method which goes back to \cite{peccati2010stein}, often referred to as the Malliavin--Stein approach  and is also employed in \cite{eichelsbacher2014new,lachieze2022quantitative,schulte2016normal,last2016normal}.
The key observation is that the occupation field $\xi$ can be regarded as
a functional
of a Poisson random measure on path space, so that $W_N(A), D_N(A)$ and $Y_{N,\varphi}$ belong to
the $L^2$ space of a Poisson random measure.
This allows us to apply the general normal approximation bounds available
for Poisson functionals.
Roughly speaking, our approach has three steps.

1. We realize the occupation field as a functional of a Poisson random measure on path space; see
      \eqref{eq:xiFunctional}. For $W_N(A)$ and 
$D_N(A)$ this yields first-chaos
      representations such as \eqref{eq:WN-proof-chaos} and \eqref{eq:DN-proof-chaos}, while for the path-sampled polynomial
      functionals $Y_{N,\varphi}$ we obtain a finite conditional Poisson--Charlier chaos expansion; see
      \eqref{eq:Yk-chaos}.

2. We determine the correct variance scale from the associated covariance sums. In particular,
      fixed-region sampling in dimension one leads to $\operatorname{Var}(W_N(A))\asymp N^{3/2}$ and $\operatorname{Var}(D_N(A))\asymp N^{1/2}$, whereas drifted path sampling yields linear variance growth through
      the summability of the averaged correlations $a_t^{(q)}$; see \eqref{eq:ck-constant}.

3. We apply Poisson Malliavin--Stein bounds. For the path-sampled observables, this requires an
      additional annealed-by-conditional reduction; see \eqref{eq:MS-Yk}. The main technical input
      is then the control of the associated contraction kernels and fluctuation terms; see
      Lemma~\ref{lem:Hrs-L2-Yk}.

The main technical difficulty lies in controlling the contributions of
higher–order chaos terms and their correlations over large time
intervals.
Nevertheless, the Poisson structure of the model and the explicit
representation of the occupation field make it possible to carry out
these computations and to derive the quantitative bounds stated in
Theorems~\ref{thm:WN} and~\ref{thm:Yk-main}.

{\bf Discussions.}

1. Our techniques can potentially be extended to higher dimensions. The restriction to the
one-dimensional setting simplifies the notation and allows us to avoid certain technicalities.
For the fixed-region observables, the variance asymptotics and hence the resulting quantitative
rates are expected to depend on the dimension through the decay of the underlying heat kernel.
For the path-sampled polynomial functionals under drifted sampling, the same mechanism based on
ballistic separation and summable averaged correlations in Subsection~\ref{sec:auxiliary} should remain applicable in higher
dimensions under analogous assumptions.

2. The discrete-time setting is adopted here mainly for clarity of notation and to keep the technical
core transparent. The same strategy should also apply to continuous-time analogues, after replacing
the discrete path space by a suitable c\`adl\`ag path space and the discrete kernels by the
corresponding transition semigroup.

3. The particle motion in the environment need not be simple symmetric random walk.
What is used in the proofs is not the exact form of the SSRW kernel, but rather the Poisson
path-space realization together with suitable heat-kernel bounds, and, where needed, local limit
estimates. In this sense, the present method should be adaptable to more general symmetric or
centered walks, and to other translation-invariant motions for which comparable kernel estimates are
available.

4. For the path-sampled observables, the role of the drift is structural rather than cosmetic. In the
drifted regime, the averaged correlations are summable and the variance is asymptotically linear,
which is the mechanism behind the $N^{-1/2}$ rate in Theorem~\ref{thm:Yk-main}. By contrast, under
symmetric sampling the correlation decay is only polynomial, and
Appendix~\ref{app:symmetric-sampling} shows that the variance growth then depends on the first
non-vanishing Poisson--Charlier coefficient of the observable: it is of order $N^{3/2}$ in rank
one, $N\log N$ in rank two, and $N$ from rank three onward. The appendix also explains why the conditional-variance step in the drifted Malliavin--Stein proof no longer closes in the
rank-one symmetric regime.

{\bf General notation.}
For the reader’s convenience, we collect some notation used throughout the paper.
\begin{itemize}
\item $C,c$ denote generic positive constants that may change from line to line.
When needed, we indicate parameter dependence by writing, for example,
$C:=C(\lambda,\varphi)$ or $c:=c(k)$.

\item For quantities $a=a(x)$ and $b=b(x)\ge 0$ in the relevant limiting regime
(typically $x=N\to\infty$ or $x=t\to\infty$), we write $a\lesssim b$ or
$a=O(b)$ if $|a(x)|\le C\,b(x)$ for some constant $C>0$ and all sufficiently
large $x$, and $a\asymp b$ if both $a\lesssim b$ and $b\lesssim a$ hold.
We write $a=o(b)$ if $a(x)/b(x)\to0$ as $x\to\infty$.

\item We write $a_n\sim b_n$ if $a_n/b_n\to1$ in the regime under consideration.

\end{itemize}

{\bf Organization of the paper.} The rest of the paper is organized as follows.
In Section~\ref{sec:pre} we collect the necessary background on
Malliavin calculus on Poisson space and Stein's method, and we state a
general normal approximation bound for square–integrable Poisson
functionals that will be used throughout the paper.
In Section~\ref{sec:3} we apply this bound to the linear functional
$W_N(A), D_N(A)$ and prove Theorem~\ref{thm:WN}.
Section~\ref{sec:Yk} establishes the annealed quantitative CLT for the path-sampled observables $Y_{N,\varphi}$ and
proves Theorem~\ref{thm:Yk-main}.
Appendix~\ref{app:symmetric-sampling} contains additional arguments of Theorem~\ref{thm:Yk-main}.

\section{Preliminaries }\label{sec:pre}

\subsection{Malliavin calculus on Poisson space }\par\noindent
 
In this section, we briefly introduce some basic notions and properties on Malliavin calculus on Poisson space. For more details on Malliavin calculus for Poisson functionals, we refer to \cite{nualart1990anticipative,last2016normal,peccati2010stein,last2016stochastic,last2018lectures} and the references therein.

{\bf Poisson random measure.} Let $(\XX,\mathscr{X})$ be a standard Borel space, which is equipped with a $\sigma$-finite measure $\mu$.
By $\eta$ we denote a Poisson random measure on $\XX$ with control measure $\mu$, which is defined on an
underlying probability space $(\Omega,\mathcal{F},\mathbb{P})$.
Actually, $\eta$ can be realized by 
$$
\eta=\sum_{j=0}^n\de_{\ga_j}, \quad \ga_j\in\XX~ {\rm and}~ n\in\NN_0\cup\{\infty\},
$$
where $\de_\ga$ is Dirac measure at $\ga\in\XX$.
Here we say Poisson random measure $\eta$ with control measure $\mu$ means:
$\eta=\{\eta(B): B\in \mathscr{X}_0\}$ is a collection of random variables indexed by the elements of
$\mathscr{X}_0=\{B\in\mathscr{X}:\mu(B)<\infty\}$ such that $\eta(B)$ is Poisson distributed with mean $\mu(B)$
for each $B\in\mathscr{X}_0$, and for all $n\in\mathbb{N}$, the random variables
$\eta(B_1),\ldots,\eta(B_n)$ are independent whenever $B_1,\ldots,B_n$ are disjoint sets from $\mathscr{X}_0$. To this end, let ${\bf N}_\sigma= {\bf N}(\XX)$ be the space of integer-valued $\sigma$-finite measures on $\XX$ equipped with the smallest sigma-field ${\mathscr N}$ making the mappings $\nu \to \nu(B)$ measurable for all $B\in\mathscr{X}$. We remark that the Poisson random measure $\eta$ is a random element in the space ${\bf N}_\sigma$. The distribution of $\eta$ (on the space ${\bf N}_\sigma$) will be denoted by $\mathbb{P}_\eta$. For more details see \cite[Chapter~VI]{ccinlar2011probability}
and \cite{last2018lectures}.

\textbf{$L^1$- and $L^2$-spaces.}
For $n \in \mathbb{N} \setminus \{0\}$, denote by $L^1(\mu^n)$ and $L^2(\mu^n)$ the space of integrable and square-integrable functions with respect to $\mu^n$, respectively. 
The scalar product and the norm in $L^2(\mu^n)$ are denoted by $\langle \cdot , \cdot \rangle_n$ and $\|\cdot\|_n$, respectively. 
From now on, we will omit the index $n$ as it will always be clear from the context. 
Moreover, let us denote by $L^2(\mathbb{P}_\eta)$ the space of square-integrable functionals of a Poisson random measure $\eta$. 
Finally, we denote by $L^2(\mathbb{P}, L^2(\mu))$ the space of jointly measurable mappings 
\[
h : \Omega \times \XX \to \mathbb{R} \quad \text{such that} \quad 
\int_\Omega \int_\XX h(\omega,z)^2   \mu(dz)   \mathbb{P}(d\omega) < \infty
\]
(recall that $(\Omega,\mathcal{F},\mathbb{P})$ is our underlying probability space).

Now we come to the chaos decomposition of square integrable functionals. 

{\bf Chaos decomposition of $L^2$ functionals.}
For any $g\in L^2(\mu)$, the stochastic integral of $g$ with respect to the compensated Poisson measure $\hat{\eta}:= \eta - \mu$ is denoted by $I_1(g)=\int_\XX g(\ga)\hat{\eta}(\D \ga)$. For $n\geq 2$ and $g_n\in L^2(\mu^n)$, the multiple Wiener-It\^{o} integral of $g_n$ is given by 
\begin{align}\label{n:integral} 
I_n(g_n)=\sum_{J\subset[n]}(-1)^{n-|J|}\int_{\XX^n}g_n(\ga_1,\dots,\ga_n)\eta^{(|J|)}(\D \ga_{J})\mu^{n-|J|}(\D \ga_{J^C}).
\end{align}
 Especially, when $g_n=g^{\otimes n}$ with some $g\in L^2(\mu)$, \eqref{n:integral} is reduced to 
\begin{align}\label{n:tensor} 
I_n(g_n)=\sum_{k=0}^n(-1)^{n-k}\binom {n}{k}\eta^{(k)}(g^{\otimes k})[\mu(g)]^{n-k},
\end{align}
where the $\eta^{(k)}$, for $k\geq 2$, is called k-th \emph{factorial measure} of $\eta$. For $B\in{\mathscr X}$, we have that
$$
\eta^{(k)}(B^k)=\eta(B)(\eta(B)-1)\cdots(\eta(B)-k+1).
$$

For every $f\in L^2(\PP_\eta)$, there exists a a uniquely determined \emph{symmetric} function
\[
  g_n\in L^2(\mu^n),\qquad n\ge0,
\]
(with the convention $L^2(\mu^0)=\mathbb R$) such that
\begin{equation}\label{CD1}
  f(\eta)=\sum_{n=0}^\infty I_n(g_n),
\end{equation}
where the series converges in $L^2$. Moreover,
\begin{equation}\label{e:I_n}
  \EE\big[I_n(g_n)I_m(g_m)\big]
  =n!\,\mathbf 1_{\{n=m\}}\langle g_n,g_m\rangle_{L^2(\mu^n)}.
\end{equation}
The representation \eqref{CD1} is called the chaotic expansion
of $F$ and we say that $F$ has a finite chaotic expansion if only finitely many of
the functions $g_n$ are non-vanishing. In particular, \eqref{CD1} together with the
orthogonality of multiple stochastic integrals leads to the variance formula
\begin{equation}\label{chao-var}
\operatorname{Var}(f)=\sum_{n=1}^{\infty} n!\,\|g_n\|^{2}.
\end{equation}

{\bf Malliavin operators.} In order to obtain the coefficients $g_n$ for a given functional $f$, we need to introduce Malliavin derivative. The derivative of measurable function $f:{\bf N}_\sigma\to \RR$, in the direction of $\gamma\in \XX$, denoted by ${D}_\ga f$, on ${\bf N}_\sigma$ is defined by  
\begin{align}\label{deriva}
{D}_\ga f(\eta)=f(\eta+\de_\ga)-f(\eta),\quad \eta\in{\bf N}_\sigma,
\end{align}
where the Dirac measure $\delta_\ga$ is defined by
\[
\delta_\ga(B)=\mathbf{1}_B(\ga),\quad B\in\mathscr{X}.
\]
By iterating procedure, one can define ${D}^n_{\ga_n,\dots,\ga_1} f$ for $n\geq 2$ and $(\ga_1,\dots,\ga_n)\in \XX^n$ recursively by
$$
{D}^n_{\ga_n,\dots,\ga_1} f={D}^1_{\ga_n}{D}^{n-1}_{\ga_{n-1},\dots,\ga_1} f,
$$
with ${D}^1f={D}f, {D}^0f=f$. The following formula is obvious,
\begin{align}\label{n-D}
{D}^n_{\ga_n,\dots,\ga_1} f(\eta)=\sum_{J\subset[n]}(-1)^{n-|J|}f(\eta+\sum_{j\in J}\de_{\ga_j}).
\end{align}
From this we know the operator ${D}^n_{\ga_n,\dots,\ga_1} $ is symmetric in $(\ga_1,\dots,\ga_n)$. For $f\in L^2(\PP_\eta)$, we define a mapping $T_nf:\XX^n\to \RR$ via
\begin{align}\label{T}
T_nf(\ga_1,\dots,\ga_n)=\EE[{D}^n_{\ga_n,\dots,\ga_1} f].
\end{align}

For any $f\in L^2(\PP_\eta)$, the above $T_nf\in L_s^2(\mu^n)$, where the subscript "s" in $L_s^2(\mu^n)$ stands for the function is symmetric in its arguments, and the chaos decomposition \eqref{CD1} can be written as
\begin{align}\label{CD2}
f(\eta)=\sum_{k=0}^\infty\frac{I_k(T_kf)}{k!}.
\end{align}

Suppose that $f\in L^2(\PP_\eta)$ admits the decomposition as \eqref{CD1}. We call $f\in {\rm dom}({D})$, the domain of operator ${D}$, if
$$
\sum_{n=1}^\infty nn!\|g_{n}\|^2_{n}<\infty.
$$
In this case, one has $\PP_\eta$ a.e. that 
\begin{align}\label{De}
{D}_\ga f=\sum_{n=1}^\infty nI_{n-1}(g_{n}(\ga,\cdot)),\quad \ga\in\XX.
\end{align}
For $f\in L^2(\PP_\eta)$ satisfying \eqref{CD1}, we call $f$ belonging to the domain of Ornstein-Uhlenbeck operator ${L}$, denoted by $f\in {\rm dom}({L})$, if $\{g_n\}_{n=0}^\infty$ satisfies 
$$
\sum_{n=1}^\infty n^2n!\|g_n\|^2_{n}<\infty.
$$
In this case, one has 
\begin{align}\label{OU}
{L}f=-\sum_{n=1}^\infty nI_n(g_n)
\end{align}
and its inverse is denoted by $L^{-1}$. In terms of the chaos expansion of a centred random
variable $f\in L^{2}(\mathbb P_\eta)$, i.e.\ $\mathbb E(f)=0$, it is given by
\begin{align}\label{L_inverse}
L^{-1}F=-\sum_{n=1}^{\infty}\frac{1}{n}\, I_n(g_n).
\end{align}
Finally, let $\ga \mapsto h(\ga)$ be a random function on $\XX$ with chaos expansion 
\[
h(\ga) = h_0(\ga) + \sum_{n=1}^\infty I_n(h_n(\ga,\cdot))
\]
with symmetric functions $h_n(\ga,\cdot) \in L^2(\mu^n)$ such that
\[
\sum_{n=0}^\infty (n+1)! \| h_n \|^2 < \infty,
\]
(let us write $h \in\rm dom(\delta)$ if this is satisfied), the Skorohod integral $\delta(h)$ of $h$ is defined as
\[
\delta(h) = \sum_{n=0}^\infty I_{n+1}(\widetilde{h}_n),
\]
where $\widetilde{h}_n$ is the canonical symmetrization of $h_n$ as a function of $n+1$ variables.  

The next lemma summarizes a relationships between the operators $D$, $\delta$ and $L$, the classical and a modified integration-by-parts-formula as well as an isometric formula for Skorohod integrals.
\begin{lemma}
\begin{itemize}
\item[(i)] It holds that $f \in \rm dom(L)$ if and only if $f \in \rm dom(D)$ and $Df \in \rm dom(\delta)$, in which case
\begin{align}
\delta(Df) = -Lf. 
\end{align}

\item[(ii)] We have the integration-by-parts formula
\begin{align}\label{eq:integration-by-parts}
\mathbb{E}[f\delta(h)] = \mathbb{E}\langle Df, h \rangle 
\end{align}
for every $f \in \rm dom(D)$ and $h \in \rm dom(\delta)$.

\item[(iii)] Suppose that $f \in L^2(\mathbb{P}_\eta)$ (not necessarily assuming that $f$ belongs to the domain of $D$), that $h \in \rm dom(\delta)$ has a finite chaos expansion and that $D_\ga \mathbf 1(f>x) h(\ga) \geq 0$ for any $x \in \mathbb{R}$ and $\mu$-almost all $\ga \in \XX$. Then
\begin{align}
\mathbb{E}[\mathbf 1(f>x)\delta(h)] = \mathbb{E}\langle D \mathbf 1(f>x), h \rangle. 
\end{align}

\item[(iv)] If $h \in \rm dom(\delta)$ it holds that
\begin{align}
\mathbb{E}[\delta(h)^2] 
= \mathbb{E} \int_{\XX} h(\ga_1)^2   \mu(d\ga_1) 
+ \mathbb{E} \int_{\XX} \int_{\XX} (D_{\ga_2}h(\ga_1))(D_{\ga_1}h(\ga_2))   \mu(d\ga_1)\mu(d\ga_2). 
\end{align}
\end{itemize}
\end{lemma}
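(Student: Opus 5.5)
We verify each item of the lemma directly on the level of chaos expansions. The standing tools are the orthogonality relation \eqref{e:I_n}, the chaos formula \eqref{CD1}, and the representation \eqref{De} of $D$. For (i), take $f=\sum_n I_n(g_n)$. Then \eqref{De} gives $D_\ga f=\sum_{n\ge1} nI_{n-1}(g_n(\ga,\cdot))$, so $Df$ has chaos kernels $h_{n-1}(\ga,\cdot)=n g_n(\ga,\cdot)$. These kernels are already symmetric as functions of $n$ variables, so $\widetilde h_{n-1}=ng_n$. The condition $Df\in\mathrm{dom}(\delta)$ therefore reads $\sum_n n!\,n^2\|g_n\|^2<\infty$, which is exactly the condition for $f\in\mathrm{dom}(L)$. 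It also implies the condition for $\mathrm{dom}(D)$, since $n\,n!\le n^2 n!$. Finally, $\delta(Df)=\sum_n I_n(ng_n)=-Lf$ by \eqref{OU}.

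For (ii), expand $f=\sum I_n(g_n)$ and $\delta(h)=\sum_n I_{n+1}(\widetilde h_n)$. By \eqref{e:I_n},
\[
\EE[f\delta(h)]=\sum_n (n+1)!\langle g_{n+1},\widetilde h_n\rangle=\sum_n (n+1)!\langle g_{n+1},h_n\rangle,
\]
where the second equality uses the symmetry of $g_{n+1}$. On the other side, $\EE\langle Df,h\rangle=\int \EE[D_\ga f\,h(\ga)]\mu(d\ga)$. Pairing the $n$-th chaos of $D_\ga f$, namely $(n+1)I_n(g_{n+1}(\ga,\cdot))$, with $I_n(h_n(\ga,\cdot))$ gives $(n+1)\,n!\,\langle g_{n+1}(\ga,\cdot),h_n(\ga,\cdot)\rangle$. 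Integrating in $\ga$ yields the same sum. To justify the interchange of sum and integral we use Cauchy--Schwarz together with $\sum n\,n!\|g_n\|^2<\infty$ and $\sum (n+1)!\|h_n\|^2<\infty$.

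For (iv), the isometry, the left side is $\sum_n (n+1)!\|\widetilde h_n\|^2$. We expand the norm of the symmetrization, which averages over $(n+1)!$ permutations, by separating the diagonal term from the cross terms in which the distinguished variable $\ga$ is swapped with another one. The diagonal term gives $\sum n!\|h_n\|^2=\EE\int h^2\,d\mu$. The swap terms give $\sum_n n\cdot n!\,\langle h_n(\ga_1,\ga_2,\cdot),h_n(\ga_2,\ga_1,\cdot)\rangle$ integrated over $(\ga_1,\ga_2)$. This is exactly $\EE\iint D_{\ga_2}h(\ga_1)D_{\ga_1}h(\ga_2)$, which we see by applying \eqref{De} to $h(\ga_1)$ and using \eqref{e:I_n}. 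The bookkeeping of this combinatorial count is routine; we would first do it for $h$ in finitely many chaoses and then extend by density, both sides being continuous in the $\mathrm{dom}(\delta)$ norm.

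Item (iii) is the main obstacle, because $\mathbf 1(f>x)$ need not lie in $\mathrm{dom}(D)$, so (ii) cannot be applied directly. The plan is to truncate and pass to the limit. Since $h$ has a finite chaos expansion, $\delta(h)$ lies in a finite sum of chaoses and hence in $L^2$. Let $P_m$ denote the projection onto the chaoses of order at most $m$, and set $G=\mathbf 1(f>x)$. Each $P_mG$ lies in $\mathrm{dom}(D)$, so (ii) gives $\EE[P_mG\,\delta(h)]=\EE\langle DP_mG,h\rangle$. For $m$ beyond the top chaos of $\delta(h)$ the left side equals $\EE[G\delta(h)]$. The right side can be rewritten through the kernels $T_nG$ from \eqref{T}, giving $\sum_n \frac{1}{n!}\langle T_{n+1}G,(n+1)!\,h_n\rangle$ up to normalization. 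Identifying this with $\EE\langle DG,h\rangle$ requires the Mecke formula, $\EE\int D_\ga G\,h(\ga)\,\mu(d\ga)=\EE\bigl[\int G(\eta)h(\ga,\eta-\delta_\ga)\,\eta(d\ga)-\ldots\bigr]$. The sign hypothesis $D_\ga G\,h(\ga)\ge0$ ensures that the integral $\EE\langle DG,h\rangle$ is well defined in $[0,\infty]$ and lets us use monotone convergence instead of dominated convergence. We would therefore derive the identity via Mecke's formula for $G$ bounded and $h$ a finite chaos, and that step is where most of the care is needed.
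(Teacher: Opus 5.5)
Your arguments for (i), (ii) and (iv) are correct. They are the standard chaos-level verifications; the paper gives no argument of its own and simply refers to Lemma~2.1 of \cite{eichelsbacher2014new}. In (iv), it is the paper's unsymmetrized condition $\sum_n (n+1)!\|h_n\|^2<\infty$ in the definition of $\mathrm{dom}(\delta)$ that guarantees $h(\gamma_1)\in\mathrm{dom}(D)$ for $\mu$-a.e.\ $\gamma_1$, so \eqref{De} may legitimately be applied there.

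The gap is in (iii). Your truncation step carries no information. If $h$ has chaoses only up to order $M$ and $m>M$, then both sides of (ii) applied to $P_mG$ equal $\sum_{n\le M}(n+1)!\langle g_{n+1},h_n\rangle=\EE[G\,\delta(h)]$, where $g_n$ are the kernels of $G=\mathbf 1(f>x)$. The entire content of (iii) is therefore the identity
\[
\EE\int_\XX D_\gamma G\,h(\gamma)\,\mu(d\gamma)=\sum_{n\le M}(n+1)!\,\langle g_{n+1},h_n\rangle,
\]
and this is exactly the step you defer to a Mecke-formula argument that is never carried out. As sketched, that route does not go through directly. Splitting $D_\gamma G\,h(\gamma)=G(\eta+\delta_\gamma)h(\gamma)-G(\eta)h(\gamma)$ destroys the sign, so monotone convergence no longer applies to the pieces. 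Moreover, each piece, and the pathwise formula for $\delta(h)$, needs $h\in L^1(\PP\otimes\mu)$, which is not assumed. A deterministic $h\in L^2(\mu)\setminus L^1(\mu)$ already violates this, since $\mu$ is infinite here. You would at least need an extra localization $h\mathbf 1_B$ with $\mu(B)<\infty$, followed by a limit in $B$.

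The missing idea is to work at fixed $\gamma$. For each $\gamma$, $D_\gamma G\in\{-1,0,1\}$ is bounded, so \eqref{CD2} applies to it. Its kernels are $T_n(D_\gamma G)=T_{n+1}G(\gamma,\cdot)=(n+1)!\,g_{n+1}(\gamma,\cdot)$ for $\mu$-a.e.\ $\gamma$. Since $h(\gamma)$ lives in chaoses of order at most $M$, orthogonality gives
\[
\EE[D_\gamma G\,h(\gamma)]=\sum_{n\le M}(n+1)!\,\langle g_{n+1}(\gamma,\cdot),h_n(\gamma,\cdot)\rangle .
\]
Equivalently, $\EE[D_\gamma G\,h(\gamma)]=\EE[D_\gamma P_mG\,h(\gamma)]$ pointwise in $\gamma$ for $m>M$; this is the link your truncation was missing. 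Each summand is $\mu$-integrable in $\gamma$ by Cauchy--Schwarz. You correctly sensed where the sign hypothesis enters, but it is needed exactly once: to justify Tonelli, $\EE\int D_\gamma G\,h(\gamma)\,\mu(d\gamma)=\int\EE[D_\gamma G\,h(\gamma)]\,\mu(d\gamma)$, without knowing beforehand that $DG\cdot h\in L^1(\PP\otimes\mu)$. Integrating the finite sum over $\gamma$ then yields $\EE[G\,\delta(h)]$.
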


We refer the reader to Lemma 2.1 in \cite{eichelsbacher2014new} for more details. Moreover, we refer to \cite{MR2824870} for a pathwise interpretation of the Skorohod integral.

\textbf{Contractions.} For integers $q_1, q_2 \geq 1$, let $r \in \{0,\ldots, \min(q_1,q_2)\}$, $\ell \in \{1,\ldots,r\}$, and let $f_1 \in L^2(\mu^{q_1})$ and $f_2 \in L^2(\mu^{q_2})$ be symmetric 
functions. 
The contraction kernel $f_1 \star_r^\ell f_2$ on $\XX^{q_1+q_2-r-\ell}$ acts on the tensor product $f_1 \otimes f_2$ 
first by identifying $r$ variables and then integrating out $\ell$ among them. More formally,
\begin{align*}
& f_1 \star_r^\ell f_2(\gamma_1,\ldots,\gamma_{r-\ell}, t_1,\ldots,t_{q_1-r}, s_1,\ldots,s_{q_2-r}) \\
&= \int_{\XX^\ell} f_1(\ga_1,\ldots,\ga_\ell, \gamma_1,\ldots,\gamma_{r-\ell}, t_1,\ldots,t_{q_1-r}) \\
&\quad \times f_2(\ga_1,\ldots,\ga_\ell,\gamma_1,\ldots,\gamma_{r-\ell}, s_1,\ldots,s_{q_2-r}) 
  \mu^\ell(\mathrm{d}(\ga_1,\ldots,\ga_\ell)).
\end{align*}

In addition, we define
\[
f_1 \star_r^0 f_2(\gamma_1,\ldots,\gamma_r,t_1,\ldots,t_{q_1-r}, s_1,\ldots,s_{q_2-r}) 
= f_1(\gamma_1,\ldots,\gamma_r,t_1,\ldots,t_{q_1-r}) f_2(\gamma_1,\ldots,\gamma_r,s_1,\ldots,s_{q_2-r}).
\]

Besides of the contraction $f_1 \star_r^\ell f_2$, we will also deal with their canonical symmetrizations 
$f_1  \widetilde{\star}_r^\ell f_2$. They are defined as
\[
(f_1  \widetilde{\star}_r^\ell f_2)(x_1,\ldots,x_{q_1+q_2-r-\ell})
= \frac{1}{(q_1+q_2-r-\ell)!} \sum_{\pi} (f_1 \star_r^\ell f_2)(x_{\pi(1)},\ldots,x_{\pi(q_1+q_2-r-\ell)}),
\]
where the sum runs over all $(q_1+q_2-r-\ell)!$ permutations of $\{1,\ldots,q_1+q_2-r-\ell\}$.

\textbf{Product formula.} Let $q_1,q_2 \geq 1$ be integers and $f_1 \in L^2(\mu^{q_1})$ and $f_2 \in L^2(\mu^{q_2})$ 
be symmetric functions. In terms of the contractions of $f_1$ and $f_2$ introduced in the previous paragraph, 
one can express the product of $I_{q_1}(f_1)$ and $I_{q_2}(f_2)$ as follows:
\begin{align}\label{mutiIn}
I_{q_1}(f_1) I_{q_2}(f_2) = \sum_{r=0}^{\min(q_1,q_2)} r! \binom{q_1}{r} \binom{q_2}{r} 
\sum_{\ell=0}^{r} \binom{r}{\ell}   I_{q_1+q_2-r-\ell}(f_1  \widetilde{\star}_r^\ell f_2).
\end{align}
see \cite[Proposition 6.5.1]{peccati2011wiener}.

\subsection{Malliavin-Stein bound}\label{sec:MSB}\par\noindent

Besides Malliavin calculus, our proof of Theorem \ref{thm:WN} and \ref{thm:Yk-main} rests upon Stein’s method that goes back to Stein \cite{stein1972bound,stein1986approximate} and is a powerful tool for proving limit theorems. For a detailed and more general introduction into this topic, we refer to \cite{MR2732624,chen2005stein,stein1986approximate}. 

{\bf Probability distances.} To measure the distance between the distributions of two random variables
$X$ and $Y$ defined on a common probability space $(\Omega,\mathcal{F},\mathbb{P})$,
one often uses distances of the form
\[
d_{\mathcal H}(X,Y)=\sup_{h\in\mathcal H}\bigl|\mathbb{E}h(X)-\mathbb{E}h(Y)\bigr|,
\]
where $\mathcal H$ is a suitable class of real-valued test functions
(note that we slightly abuse notation by writing $d(X,Y)$ instead of
$d(\mathcal L(X),\mathcal L(Y))$).
Prominent examples are the class $\mathcal H_W$ of Lipschitz functions with
Lipschitz constant bounded by one and the class $\mathcal H_K$ of
indicator functions of intervals $(-\infty,x]$ with $x\in\mathbb{R}$.
The resulting distances $d_W:=d_{\mathcal H_W}$ and $d_K:=d_{\mathcal H_K}$
are usually called Wasserstein and Kolmogorov distance. 
We notice that $d_W(X_n,Y)\to0$ or $d_K(X_n,Y)\to0$ as $n\to\infty$ for a
sequence of random variables $X_n$ implies convergence of $X_n$ to $Y$ in
distribution.
These metrics provide quantitative versions of the central limit theorem:
bounds of the form $d(X,Z)=O(N^{-\alpha})$, where $Z\sim\mathcal N(0,1)$
is standard normal, describe the rate of convergence of the
distribution of $X$ to $Z$.

{\bf Stein's method.} A standard Gaussian random variable $Z$ is characterized by the fact that for
every absolutely continuous function $f:\mathbb{R}\to\mathbb{R}$ for which
$\mathbb{E}\!\left[ |Z f(Z)| \right]<\infty$ it holds that
\[
\mathbb{E}\!\left[f'(Z)-Z f(Z)\right]=0.
\]

The Wasserstein distance between two $\RR^d$-valued variables $F$ and $Z$, denoted by $d_W(F,Z)$, is defined by
\begin{align}\label{dW1} 
d_W(F,Z)=\sup\{|\EE h(F)-\EE h(Z)|: \|h\|_{\rm Lip}\leq 1 \},
\end{align}
where, for all functions $h: \RR^d\to\RR$,
$$
\|h\|_{\rm Lip}=\sup_{x,y\in \RR^d,x\ne y}\frac{|h(x)-h(y)|}{\|x-y\|_{\RR^d}}.
$$
By Stein method, if $F$ is square integrable and $Z\sim N(0,1)$ standard normal distribution, then the Wasserstein distance \eqref{dW1} has the following {\bf Stein bound:}
\begin{align}\label{SB1}
d_W(F,Z)\leq \sup_{f\in{\mathscr F}_W}|\EE f'(F)-\EE Ff(F)|,
\end{align}
where ${\mathscr F}_W=\{f\in {\mathscr C}^1: \|f'\|_\infty\leq \sqrt{2/\pi}, \|f''\|_\infty\leq 2\}$ and ${\mathscr C}^1$ is the collection of all continuously differentiable functions on $\RR$.

The following result gives the upper bound of the Wasserstein distance $d _W(F,Z)$ in terms of Malliavin calculus , see \cite[Theorem 7]{bourguin2016malliavin} for details.

\begin{lemma}
All notations and assumptions as before. Let $F\in\rm dom(D)$ with $\mathbb{E}[F]=0$, then we have
\begin{align}\label{SB2}
\begin{split}
d_{W}(F,Z)&\leq\sqrt{\frac{2}{\pi}}\EE\left[|1-\langle{D}F,-{D}{L}^{-1}F\rangle_{L^2{(\mu)}}|\right]+\int_{\XX}\EE\left[|{D}_\ga F|^2|{D}_\ga{L}^{-1}F|\right]\mu{(\D\ga)}\\
&\leq\sqrt{\frac{2}{\pi}\EE\left[(1-\langle{D}F,-{D}{L}^{-1}F\rangle_{L^2{(\mu)}})^2\right]}+\int_{\XX}\EE\left[|{D}_\ga F|^2|{D}_\ga{L}^{-1}F|\right]\mu{(\D\ga)},
\end{split}
\end{align}
where we use the standard notation
\[
\langle DF, -DL^{-1}F\rangle_{L^2{(\mu)}}
= - \int_{\XX} (D_{z}F)\times (D_{z}L^{-1}F) \mu(\D z).
\]   
\end{lemma}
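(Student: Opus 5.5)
We derive the bound directly from the Stein bound \eqref{SB1}, rewriting $\EE[Ff(F)]$ with the Poisson integration-by-parts formula and Taylor-expanding the difference operator. Throughout, fix $f\in\mathscr F_W$, so $\|f'\|_\infty\le\sqrt{2/\pi}$ and $\|f''\|_\infty\le 2$.

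\textbf{Step 1: domain checks.} Since $\EE F=0$, \eqref{L_inverse} gives $F=LL^{-1}F$. Since $F\in{\rm dom}(D)$, the chaos kernels $g_n$ of $F$ satisfy $\sum_n n\,n!\|g_n\|^2<\infty$. This is exactly the condition $\sum_n n^2 n!\|g_n/n\|^2<\infty$, so $L^{-1}F\in{\rm dom}(L)$. Part (i) of the first lemma then gives $DL^{-1}F\in{\rm dom}(\delta)$ and $F=LL^{-1}F=-\delta(DL^{-1}F)$. Moreover $f(F)\in{\rm dom}(D)$. Indeed, $f$ is Lipschitz, so $|D_z f(F)|=|f(F+D_zF)-f(F)|\le \sqrt{2/\pi}\,|D_zF|$, and $DF\in L^2(\PP,L^2(\mu))$. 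The identification of the pathwise difference operator with the chaos-defined $D$ for such square-integrable differences is the standard fact from Last--Penrose, which I would quote rather than reprove.

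\textbf{Step 2: integration by parts.} By \eqref{eq:integration-by-parts},
\[
\EE[Ff(F)]=\EE\bigl[f(F)\,\delta(-DL^{-1}F)\bigr]=\EE\int_\XX D_zf(F)\,(-D_zL^{-1}F)\,\mu(\D z).
\]

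\textbf{Step 3: Taylor expansion of the difference.} Write $D_zf(F)=f'(F)D_zF+R_z$. Taylor's theorem gives $|R_z|\le \tfrac12\|f''\|_\infty (D_zF)^2\le (D_zF)^2$. Substituting,
\[
\EE f'(F)-\EE Ff(F)=\EE\Bigl[f'(F)\bigl(1-\langle DF,-DL^{-1}F\rangle_{L^2(\mu)}\bigr)\Bigr]+\EE\int_\XX R_z\,D_zL^{-1}F\,\mu(\D z).
\]
The first term is bounded by $\sqrt{2/\pi}\,\EE|1-\langle DF,-DL^{-1}F\rangle|$. The second is bounded by $\int_\XX\EE[(D_zF)^2|D_zL^{-1}F|]\,\mu(\D z)$, using Fubini for nonnegative integrands. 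Taking the supremum over $f\in\mathscr F_W$ in \eqref{SB1} yields the first inequality in \eqref{SB2}. The second inequality follows from $\EE|X|\le(\EE X^2)^{1/2}$.

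\textbf{Main obstacle.} The only delicate point is Step 1: justifying that the pathwise difference of $f(F)$ coincides with the Malliavin derivative, so that \eqref{eq:integration-by-parts} applies, and that all the integrals involved are finite. If $\int\EE[(D_zF)^2|D_zL^{-1}F|]\mu(\D z)=\infty$, the bound is trivial. Otherwise, the Lipschitz estimate from Step 1 and Cauchy--Schwarz make every term integrable.
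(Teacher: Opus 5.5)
Your proposal is correct. It is the standard argument: the Stein bound, $F=\delta(-DL^{-1}F)$, integration by parts, and a second-order Taylor expansion $D_zf(F)=f'(F)D_zF+R_z$ with $|R_z|\le\tfrac12\|f''\|_\infty(D_zF)^2$. This is the argument behind the result, which the paper does not prove but simply quotes from \cite[Theorem 7]{bourguin2016malliavin}.

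One small slip in Step 1: $\sum_n n^2 n!\,\|g_n/n\|^2=\sum_n n!\,\|g_n\|^2=\operatorname{Var}(F)$. So $L^{-1}F\in{\rm dom}(L)$ holds for every centered $F\in L^2(\PP_\eta)$, and this condition is not ``exactly'' the ${\rm dom}(D)$ condition. Your conclusion is unaffected.
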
 

\subsection{Functional of Poisson random fields}\par\noindent

% Path-space and Poisson construction (discrete time, 1D)

To prove  Theorem \ref{thm:WN} and \ref{thm:Yk-main}, we need to use the bound in \eqref{SB2}. We will show that $\xi(\cdot,\cdot)$ is a functional of Poisson random measure below.

It will be useful to view $\xi(\cdot,\cdot)$ as a subprocess of a Poisson point process on a space of simple random walk trajectories  as follows. Denote
\begin{equation}
   \WW
  \;=\;
  \bigl\{\, w=(w(n))_{n\in\mathbb N_0}:\; w(n)\in\mathbb Z,\ \lvert w(n+1)-w(n)\rvert = 1 \ \forall n\in\mathbb N_0 \,\bigr\},
\end{equation}
as the set of simple random walk trajectories on $\mathbb Z$.
Endow $\WW$ with the sigma-algebra $\mathcal W$ generated by the canonical projections
$Z_n:\WW\to\mathbb Z$, $Z_n(w)=w(n)$, $n\in\mathbb N_0$.
A partition of $\WW$ into disjoint measurable sets is given by $\{\WW_x\}_{x\in\mathbb Z}$, where
$\WW_x = \{\, w\in \WW:\ w(0)=x \,\}$.

We introduce the space $\bar\Omega$ of point measures on $\WW$  as
\begin{equation}
  \bar\Omega
  \;=\;
  \Bigl\{\, \eta=\textstyle\sum_{j\in\mathbb Z_+}\delta_{w_j}:\;
               w_j\in \WW \ \forall j\in\mathbb Z_+,\ \lvert \eta(\WW_x)\rvert < \infty \ \forall x\in\mathbb Z \Bigr\}.
\end{equation}
Recall Definition \ref{def:pfiw}, and define a random point measure $\eta\in\bar\Omega$ by
\begin{equation}
  \eta
  \;=\;
  \sum_{z\in\mathbb N} \ \sum_{1\le i\le \xi(0,z)} \delta_{X^{z,i}} ,
\end{equation}
where $\delta_w$ stands for the Dirac measure concentrated at the point $w\in \WW$.
It is straightforward to check that, under $\mathbb P$, $\eta$ is a Poisson point process on $\WW$
with intensity measure $\mu$, where
\begin{equation}
  \mu(\cdot)
  \;=\;
  \lambda\sum_{x\in\mathbb Z} \mathbb P_x(\cdot),
\end{equation}
and $\mathbb P_x$ is the law on $\WW$, with support on $\WW_x$. Note that under which $Z(\cdot)=(Z_n(\cdot))_{n\in\mathbb N_0}$
is distributed as a simple symmetric random walk on $\mathbb Z$.

For a nonnegative measurable functional $F:\WW\to[0,\infty]$. For $F\in L^1(\mu)$, we use the Poisson integral
\[
  \langle\eta,F\rangle \;:=\; \int_\WW F(w)\,\eta(\mathrm d w)
  \;=\; \sum_{j\ge1} F(w_j)\in[0,\infty].
\]
The Laplace functional (Campbell formula) of $\eta$ is
\begin{equation}\label{eq:Campbell}
  \mathbb E\Big[e^{-\langle\eta,F\rangle}\Big]
  \;=\; \exp\!\Big(-\int_\WW (1-e^{-F(w)})\,\mu(\mathrm d w)\Big),
  \qquad F\ge0.
\end{equation}
For $(n,x)\in\mathbb N_0\times\mathbb Z$ define the cylinder set
\[
  C_{n,x}\;:=\;\{w\in \WW:\; w(n)=x\}\;\in\mathcal W,
\]
and the indicator functional $F_{n,x}(w):=\mathbf 1_{C_{n,x}}(w)=\mathbf 1\{w(n)=x\}$.
Therefore
\begin{equation}\label{eq:xiFunctional}
  \xi(n,x)
  \;=\; \int_\WW \mathbf 1\{w(n)=x\}\,\eta(\mathrm d w)
  \;=\; \sum_{y\in\mathbb Z}\ \sum_{k=1}^{\xi(0,y)} \mathbf 1\{Y^{y,k}_n=x\},
\end{equation}
which is exactly the equation \eqref{xi} in “Poisson field of independent walks” construction.
Thus $\xi(n,x)$ counts how many Poissonian trajectories pass through $x$ at time $n$.
More generally, for each $n\in\mathbb N$ we obtain a random counting measure on $\mathbb Z$ by
\begin{equation}\label{eq:xiMeasure}
  \xi_n(B) \;:=\; \int_\WW \mathbf 1\{w(n)\in B\}\,\eta(\mathrm d w),
  \qquad B\subset\mathbb Z\ .
\end{equation}

The representation \eqref{eq:xiFunctional} and \eqref{eq:Campbell} immediately yield the standard
facts used later.

\begin{itemize}

\item \emph{Marginals.}
Since
\[
  \mu(C_{n,x})
  \;=\;\lambda \,\sum_{y\in\mathbb Z} \mathbb P_y\big(Z_n=x\big)
  \;=\;\lambda \,\sum_{y\in\mathbb Z} \mathbb P_0\big(Z_n=x-y\big)
  \;=\;\lambda\,\sum_{z\in\mathbb Z} \mathbb P_0\big(Z_n=z\big)
  \;=\;\lambda,
\]
we have
\[
  \xi(n,x) \;\sim\; \mathrm{Poi}(\lambda),\qquad \mathbb E[\xi(n,x)]=\lambda,\qquad
  \mathrm{Var}(\xi(n,x))=\lambda.
\]

\item \emph{Independence at fixed time.}
For a fixed $n$, the sets $\{C_{n,x}\}_{x\in\mathbb Z}$ are pairwise disjoint, hence
$\{\xi(n,x)\}_{x\in\mathbb Z}$ are independent; by the previous item they are i.i.d.\
$\mathrm{Poi}(\lambda)$. Equivalently, $\xi_n$ in \eqref{eq:xiMeasure} is a Poisson random measure on
$\mathbb Z$ with intensity $\lambda\,\#(\cdot)$ (where $\#(\cdot)$ is counting measure).

\item \emph{Two-time correlations.}
For general measurable $A_1,A_2\subset \WW$,
$\mathrm{Cov}\big(\eta(A_1),\eta(A_2)\big)=\mu(A_1\cap A_2)$ for a Poisson random measure.
With $A_1=C_{n_1,x_1}$ and $A_2=C_{n_2,x_2}$ we obtain
\begin{equation}\label{eq:covarianceidentity}
  \mathrm{Cov}\,\big(\xi(n_1,x_1),\xi(n_2,x_2)\big)
  \;=\; \mu\!\big(C_{n_1,x_1}\cap C_{n_2,x_2}\big)
  \;=\; \lambda\,Q_{|n_2-n_1|}(x_2-x_1).
\end{equation}
In particular, variables at different times are typically not independent.

\item \emph{Laplace functionals of linear statistics.}
For any test function $\chi:\mathbb Z\to[0,\infty)$,
\[
  \langle\xi_n,\chi\rangle
  :=\sum_{x\in\mathbb Z}\chi(x)\,\xi(n,x)
  = \int_\WW \chi\!\big(Z_n(w)\big)\,\eta(\mathrm d w),
\]
and by \eqref{eq:Campbell},
\[
  \mathbb E\Big[e^{-\langle\xi_n,\chi\rangle}\Big]
  = \exp\!\Big(-\lambda\,\sum_{x\in\mathbb Z} \bigl(1-e^{-\chi(x)}\bigr)\Big).
\]
This is the Laplace functional of an independent product of $\mathrm{Poi}(\lambda)$ variables
indexed by $\mathbb Z$, confirming the previous item.

\end{itemize}
Let $\mathsf N(\WW)$ be the space of locally finite counting measures on $\WW$ and equip it with the
evaluation $\sigma$-algebra
\[
  \mathcal N\;:=\;\sigma\big\{\#\mapsto \#(A):\,A\in\mathcal W\big\}.
\]
For every nonnegative measurable $f:\WW\to[0,\infty]$, the map
\[
  T_f:\ \mathsf N(\WW)\to[0,\infty],\qquad T_f(\eta):=\int_\WW f(w)\,\eta(\mathrm dw),
\]
is $(\mathcal N,\mathcal B([0,\infty]))$-measurable. By the measurability of $T_{F_{n,x}}$, the map
\[
J_{n,x}:\ \mathsf N(\WW)\to [0,\infty],\qquad J_{n,x}(\eta):=\eta(C_{n,x}),
\]
is $(\mathcal N,\mathcal B([0,\infty]))$-measurable, hence $\xi(n,x)=J_{n,x}(\eta)$ is a (measurable) functional of the Poisson measure $\eta$.

Let $\psi: [0,\infty]\to\mathbb R$ be Borel measurable .
By closure under composition,
\[
  \psi\!\big(\xi(n,x)\big)\;=\;\psi\!\big(J_{n,x}(\eta)\big)
\]
is a measurable functional of the Poisson measure $\eta$.
More generally, for any finite family $(n_i,x_i)_{i=1}^N$ and any Borel map
$\Psi:[0,\infty]^N \to\mathbb R$,
\[
  \Psi\big(\xi(n_1,x_1),\dots,\xi(n_N,x_N)\big)
  \;=\;\Psi\big(J_{n_1,x_1}(\eta),\dots,J_{n_N,x_N}(\eta)\big)
\]
is again a measurable Poisson measure functional on $\mathsf N(\WW)$.

Finally, using
\eqref{eq:xiFunctional} we can write
\[
W_N(A)=\sum_{n=1}^N \sum_{x\in A}\xi(n,x)
      =\int_{\WW} g_{N,A}(w)\,\eta(\mathrm d w),
\qquad
g_{N,A}(w):=\sum_{n=1}^N \mathbf 1\{w(n)\in A\}.
\]

Similarly, other observables such as the distinct-particle count $D_N(A)$ can also be written as Poisson functionals. Define the "hitting" set
\[
H_{N,A}:=\{w\in\WW:\ \exists\, m\in\{1,\dots,N\}\ \text{s.t. } w(m)\in A\}.
\]
Then
\[
D_N(A)=\eta(H_{N,A}),
\]
i.e., $D_N(A)$ counts the number of Poissonian trajectories that hit $A$ up to time $N$.

Moreover, for the path-sampled polynomial functional \eqref{eq:Yk-def-intro},
for every fixed trajectory $S=(S_n,\,n\in\NN_0)$ and $k\in \NN$,
\[
Y_{N,\varphi}=\sum_{n=1}^N \varphi(\xi(n,S_n))
       =\sum_{n=1}^N \varphi(\eta(C_{n,S_n}))
\]
is a measurable functional of $\eta$.

\section{Proof of Theorem \ref{thm:WN}}\label{sec:3}
\subsection{Proof of Theorem \ref{thm:WN}(a): the functional $W_N(A)$}\label{subsec:proof-WN}

In this section, we prove the quantitative normal approximation for the functional
\[
W_N(A) = \sum_{n=1}^N \xi(n,A),
\]
where $A\subset\mathbb Z$ is a finite and nonempty set.

We first determine the Wiener--It\^o chaos expansion of $W_N(A)$.
Recall that $\xi(n,A)$ can be represented as a functional of the Poisson random
measure $\eta$ on the path space $\WW$ via
\[
  \xi(n,A)=\int_{\WW}\mathbf 1\{w(n)\in A\}\,\eta(\mathrm dw).
\]
For $n\in\{1,\dots,N\}$, define
\[
  C_{n,A}:=\{w\in\WW:\ w(n)\in A\},
\]
and set
\[
  f_N(w):=\sum_{n=1}^N \mathbf 1_{C_{n,A}}(w),\qquad w\in\WW.
\]
Then, by the Poisson path-space representation,
\begin{equation}\label{eq:WN-proof-chaos}
  W_N(A)-\EE[W_N(A)]
  =\sum_{n=1}^N\bigl(\xi(n,A)-\mu(C_{n,A})\bigr)
  =\int_{\WW} f_N(w)\,\hat\eta(\mathrm dw)
  =:I_1(f_N),
\end{equation}
where $\hat\eta=\eta-\mu$ is the compensated Poisson measure on $\WW$.

Hence, by the isometry of first-order Poisson stochastic integrals,
\begin{align}
  \sigma_{N,W}^2
  := \operatorname{Var}(W_N(A))
  =\|f_N\|_{L^2(\mu)}^2
  =\sum_{n,m=1}^N \mu\big(C_{n,A}\cap C_{m,A}\big).
  \label{eq:WN-proof-var0}
\end{align}
For $n\le m$, using the Markov property of the simple symmetric random walk and
$\mu=\lambda\sum_{u\in\ZZ}\PP_u$, we have

\begin{equation}\label{eq:muCnCm-rigorous}
\begin{aligned}
  \mu\big(C_{n,A}\cap C_{m,A}\big)
  &=\lambda\sum_{u\in\ZZ}\PP_u\big(w(n)\in A,\ w(m)\in A\big) \\
  &=\lambda\sum_{u\in\ZZ}\sum_{x\in A}\PP_u\big(w(n)=x\big)\PP_x\big(w(m-n)\in A\big) \\
  &=\lambda\sum_{x\in A}\PP_x\big(w(m-n)\in A\big)
   =\lambda\sum_{x,y\in A}Q_{m-n}(y-x).
\end{aligned}
\end{equation}
Substituting \eqref{eq:muCnCm-rigorous} into \eqref{eq:WN-proof-var0} and separating the diagonal terms
yields
\begin{equation}\label{eq:Var-expansion-rigorous}
  \operatorname{Var}\!\big(W_N(A)\big)
  =\lambda\Bigg[ N|A| + 2\sum_{n=1}^{N-1}(N-n)\sum_{x,y\in A}Q_n(y-x)\Bigg].
\end{equation}

We will use the following parity-corrected local central limit theorem to obtain
sharp asymptotics for \eqref{eq:Var-expansion-rigorous}.

\begin{lemma}
\label{lem:heat-kernel-rigorous}
Recall that \(Q_n(x)\) is the transition kernel of the one-dimensional simple symmetric random walk. Then
\[
\sup_{x\in\ZZ}
\left|
\frac{\sqrt n}{2}\,Q_n(x)
-
\mathbf 1_{\{x\equiv n \!\!\!\!\pmod 2\}}
\frac{1}{\sqrt{2\pi}}
e^{-x^2/(2n)}
\right|
\to 0
\qquad\text{as } n\to\infty .
\]
Consequently, there exists a constant \(C<\infty\) such that
\[
Q_n(x)\le C n^{-1/2}e^{-x^2/(2n)}
\le C n^{-1/2},
\qquad x\in\ZZ,\ n\ge1.
\]
\end{lemma}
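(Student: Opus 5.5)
The plan is to prove the local limit statement directly from the explicit binomial form of the kernel, and then obtain the uniform Gaussian-type bound by splitting into a moderate-deviation regime and a large-deviation regime.

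\textbf{Step 1: explicit formula and the central window.} For $x\equiv n \pmod 2$ and $|x|\le n$ we have
\[
Q_n(x)=2^{-n}\binom{n}{(n+x)/2},
\]
and $Q_n(x)=0$ otherwise. Parity vanishing is immediate, so assume $x\equiv n\pmod 2$ from now on. In the window $|x|\le n^{2/3}$ apply Stirling's formula with error $m!=\sqrt{2\pi m}\,(m/e)^m(1+O(1/m))$ to $n!$ and to $((n\pm x)/2)!$. Then expand
\[
\tfrac{n+x}{2}\log(1+x/n)+\tfrac{n-x}{2}\log(1-x/n)=\tfrac{x^2}{2n}+O(x^4/n^3).
\]
This yields
\[
\tfrac{\sqrt n}{2}Q_n(x)=\tfrac{1}{\sqrt{2\pi}}e^{-x^2/(2n)}\bigl(1+O(n^{-1/3})\bigr),
\]
uniformly in this window. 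The exponent error is $O(n^{-1/3})$ and the prefactor error is $O(n^{-1/3})$ via $\sqrt{n^2/(n^2-x^2)}$.

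\textbf{Step 2: the tails.} For $|x|>n^{2/3}$, Hoeffding's inequality for the sum of $n$ independent $\pm1$ steps gives
\[
Q_n(x)\le \PP(|X_n|\ge n^{2/3})\le 2e^{-n^{1/3}/2}.
\]
Hence $\sqrt n\,Q_n(x)\to0$ uniformly there. The Gaussian term $e^{-x^2/(2n)}\le e^{-n^{1/3}/2}$ also vanishes uniformly. Combining with Step 1 gives the uniform convergence in the statement.

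\textbf{Step 3: the consequence.} The bound $Q_n(x)\le Cn^{-1/2}$ follows directly from Step 1, or from the central binomial coefficient bound $2^{-n}\binom{n}{\lfloor n/2\rfloor}\le Cn^{-1/2}$. For the Gaussian factor, write $k=(n+x)/2$. The Chernoff/relative entropy bound $2^{-n}\binom nk\le e^{-nH}$ holds with
\[
H=\tfrac{1+u}{2}\log(1+u)+\tfrac{1-u}{2}\log(1-u)\ge u^2/2,\qquad u=x/n,
\]
since the power series of $H$ is $\sum_{j\ge1} u^{2j}/(2j(2j-1))$. This gives $Q_n(x)\le e^{-x^2/(2n)}$. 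To recover the factor $n^{-1/2}$ as well, use the sharper Stirling-type inequality
\[
\binom nk\le \sqrt{\tfrac{n}{2\pi k(n-k)}}\,e^{nh(k/n)}\cdot e^{1/(12n)}.
\]
When $k(n-k)\ge n/4$, i.e. $|x|\le n-1$, this gives
\[
Q_n(x)\le Cn^{-1/2}\sqrt{\tfrac{n^2}{n^2-x^2}}\,e^{-x^2/(2n)}.
\]
For $|x|\le n/2$ the square-root factor is bounded, which closes this case. For $|x|>n/2$, keep the term $-nH$, which is at least $x^2/(2n)+cn$ because $H-u^2/2\ge u^4/12\ge c$. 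The extra $e^{-cn}$ absorbs both $\sqrt n$ and the square-root factor, which is at most $\sqrt{n}$. The endpoint $|x|=n$ is checked directly: $Q_n(\pm n)=2^{-n}\le Cn^{-1/2}e^{-n/2}$ since $\log 2>1/2$.

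\textbf{Main obstacle.} The chief care point is this last step: obtaining the sub-Gaussian bound with the exact constant $1/2$ in the exponent together with the $n^{-1/2}$ prefactor, uniformly up to $|x|=n$. The entropy inequality $H(u)\ge u^2/2+u^4/12$ is the device that makes this work.
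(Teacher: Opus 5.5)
Your proof is correct, and it takes a more hands-on route than the paper. The paper cites the general lattice local CLT (Petrov, Ch.~VII, Thm.~1) for the first display. It then asserts that $Q_n(x)\le Cn^{-1/2}e^{-x^2/(2n)}$ ``follows immediately'' from it, with small $n$ absorbed into $C$.

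You instead derive the limit statement directly from $Q_n(x)=2^{-n}\binom{n}{(n+x)/2}$, using Stirling on $|x|\le n^{2/3}$ and Hoeffding beyond. You then prove the upper bound separately. The tools are the entropy (Chernoff) bound $2^{-n}\binom nk\le e^{-nH(x/n)}$ with $H(u)\ge u^2/2+u^4/12$, and Robbins' form of Stirling to keep the $n^{-1/2}$ prefactor.

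What the citation buys is brevity, and generality for the limit statement: it covers any lattice walk with finite variance. What your route buys is a self-contained argument and an actual justification of the Gaussian factor. The uniform LCLT only controls $Q_n(x)$ up to an additive $o(n^{-1/2})$. That error says nothing about quantities of size $n^{-1/2}e^{-x^2/(2n)}$ once $x^2/n$ is large, in particular for $|x|\asymp n$. So from the first display alone only $Q_n(x)\le Cn^{-1/2}$ follows, not the bound with the exponential factor.

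Your Step~3 fills exactly this gap. That matters downstream: the moment-decay lemma applies the Gaussian bound at $|S_r|\asymp r$. The price is that your argument relies on the explicit binomial form of the SSRW kernel, which is also what gives you the sharp constant $1/2$. For the more general walks mentioned in the paper's discussion, one would need a Chernoff-type bound, possibly with a smaller constant in the exponent.

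Two harmless remarks. First, the prefactor error in Step~1 is actually $O(x^2/n^2)=O(n^{-2/3})$, better than the $O(n^{-1/3})$ you state. Second, for $|x|>n/2$, including $|x|=n$, the refined Stirling inequality is unnecessary. The plain entropy bound already gives $Q_n(x)\le e^{-cn}e^{-x^2/(2n)}$, and $e^{-cn}\le Cn^{-1/2}$.
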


\begin{proof}
The local limit statement is the classical local central limit theorem for lattice
distributions; see, e.g., \cite[Chapter~VII, Theorem~1]{MR388499}.
The upper bound follows immediately from this asymptotic for all sufficiently large
\(n\), and the finitely many small values of \(n\) can be absorbed into the constant
\(C\). 
\end{proof}

We can now refine \eqref{eq:Var-expansion-rigorous} as follows.

\begin{lemma}\label{lem:WN-var-sharp}
As \(N\to\infty\),
\begin{equation}\label{eq:WN-var-sharp}
\operatorname{Var}\!\big(W_N(A)\big)
=
\frac{8\lambda |A|^2}{3\sqrt{2\pi}}\,N^{3/2}
+o(N^{3/2}).
\end{equation}
In particular,
\begin{equation}\label{eq:Var-order-rigorous}
\operatorname{Var}\!\big(W_N(A)\big)\asymp N^{3/2}.
\end{equation}
\end{lemma}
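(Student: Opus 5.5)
Starting from the exact formula \eqref{eq:Var-expansion-rigorous}, the diagonal term $\lambda N|A|$ is $O(N)=o(N^{3/2})$. It therefore suffices to show
\[
\sum_{n=1}^{N-1}(N-n)\sum_{x,y\in A}Q_n(y-x)=\frac{4|A|^2}{3\sqrt{2\pi}}N^{3/2}+o(N^{3/2}).
\]
This handles each of the finitely many pairs $(x,y)\in A^2$ separately; the total $|A|^2$ comes from summing the pairs. Fix $z=y-x$, a fixed integer. By Lemma~\ref{lem:heat-kernel-rigorous}, $Q_n(z)=\mathbf 1_{\{z\equiv n \,(2)\}}\frac{2}{\sqrt{2\pi n}}e^{-z^2/(2n)}+o(n^{-1/2})$. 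Since $z$ is fixed, $e^{-z^2/(2n)}=1+O(1/n)$, so
\[
Q_n(z)=\mathbf 1_{\{n\equiv z\,(2)\}}\frac{2}{\sqrt{2\pi}}\,n^{-1/2}+\varepsilon_n n^{-1/2},\qquad \varepsilon_n\to0.
\]

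The main term is $\frac{2}{\sqrt{2\pi}}\sum_{n\le N-1,\ n\equiv z}(N-n)n^{-1/2}$. I would compare this sum with $\frac12\int_0^N (N-s)s^{-1/2}\,ds=\frac12\cdot\frac43N^{3/2}$. The factor $\frac12$ comes from the parity restriction: the function $(N-s)s^{-1/2}$ is monotone on $[1,N]$ with total variation $O(N)$, so summing over every other integer equals half the full sum up to an $O(N)$ error. The full sum is $\int_0^N(N-s)s^{-1/2}ds+O(N)$ by a Riemann-sum comparison, with the singularity at $0$ contributing only $O(N)$. Hence the main term equals $\frac{2}{\sqrt{2\pi}}\cdot\frac23N^{3/2}+O(N)=\frac{4}{3\sqrt{2\pi}}N^{3/2}+O(N)$.

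For the error term, given $\delta>0$ choose $n_0$ with $|\varepsilon_n|\le\delta$ for $n\ge n_0$. Then
\[
\sum_{n<N}(N-n)|\varepsilon_n|n^{-1/2}\le C n_0 N+\delta\cdot\tfrac43N^{3/2},
\]
so this term is $o(N^{3/2})$. Summing over the $|A|^2$ pairs and multiplying by $2\lambda$ gives \eqref{eq:WN-var-sharp}. Since the leading constant is strictly positive, $\operatorname{Var}(W_N(A))\asymp N^{3/2}$ for large $N$. For small $N$ the variance is positive because it is at least $\lambda N|A|$, and constants can be adjusted, which yields \eqref{eq:Var-order-rigorous}.

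The only delicate point is the parity bookkeeping. The local CLT gives twice the Gaussian density on the correct parity class and zero on the other, so the factor $2$ in the kernel and the factor $\frac12$ from summing every other $n$ cancel. This cancellation is what produces the constant $\frac{8}{3\sqrt{2\pi}}$, and the rest of the argument is routine Riemann-sum estimation.
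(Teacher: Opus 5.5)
Your proof is correct and follows the paper's argument essentially step for step. Both arguments apply the parity-corrected local CLT to each of the finitely many differences $y-x\in A-A$, evaluate the parity-restricted sum $\sum_{n\equiv h}(N-n)n^{-1/2}$ as half of $\int_0^N(N-s)s^{-1/2}\,ds=\frac43N^{3/2}$ by a Riemann-sum comparison, and dispose of the $o(n^{-1/2})$ remainder by the same $\varepsilon$--$n_0$ splitting. Your monotonicity/interlacing remark additionally gives an explicit $O(N)$ error for the parity-restricted sum, where the paper only records $o(N^{3/2})$.
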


\begin{proof}
Let
\[
A-A:=\{y-x:\ x,y\in A\}.
\]
Since \(A-A\) is finite, Lemma~\eqref{lem:heat-kernel-rigorous} implies that,
uniformly in \(d\in A-A\),
\begin{equation}\label{eq:WN-proof-LCLT-fixed-d}
Q_n(d)
=
\mathbf 1_{\{n\equiv d\!\!\!\!\pmod 2\}}
\frac{2}{\sqrt{2\pi n}}
+\rho_n(d),
\qquad
\sup_{d\in A-A}\sqrt n\,|\rho_n(d)|\xrightarrow[n\to\infty]{}0.
\end{equation}
Indeed, since \(A-A\) is finite,
\[
e^{-d^2/(2n)}=1+O(n^{-1})
\]
uniformly in \(d\in A-A\), and this error can be absorbed into \(\rho_n(d)\).

For \(d\in A-A\), define
\[
\Sigma_N(d):=\sum_{n=1}^{N-1}(N-n)Q_n(d).
\]
By \eqref{eq:WN-proof-LCLT-fixed-d},
\[
\Sigma_N(d)
=
\frac{2}{\sqrt{2\pi}}
\sum_{\substack{1\le n\le N-1\\ n\equiv d\!\!\!\!\pmod 2}}
(N-n)n^{-1/2}
+
\sum_{n=1}^{N-1}(N-n)\rho_n(d).
\]
Since \(\sup_{d\in A-A}\sqrt n\,|\rho_n(d)|\to0\) and
\[
\sum_{n=1}^{N-1}(N-n)n^{-1/2}\asymp N^{3/2},
\]
a standard \(\varepsilon\)-argument yields
\begin{equation}\label{eq:WN-proof-error}
\sup_{d\in A-A}
\left|
\sum_{n=1}^{N-1}(N-n)\rho_n(d)
\right|
=o(N^{3/2}).
\end{equation}

Next, for each fixed \(h\in\{0,1\}\), write
\[
T_{N,h}:=
\sum_{\substack{1\le n\le N-1\\ n\equiv h\!\!\!\!\pmod 2}}
(N-n)n^{-1/2}.
\]
Writing \(n=2m+h\), one checks by comparison with the corresponding Riemann integral that
\begin{equation}\label{eq:WN-proof-parity-sum}
T_{N,h}
=
\frac23 N^{3/2}+o(N^{3/2}),
\qquad N\to\infty.
\end{equation}
Therefore, combining \eqref{eq:WN-proof-error} and \eqref{eq:WN-proof-parity-sum}, we get
\begin{equation}\label{eq:sum-sigma}
\Sigma_N(d)
=
\frac{4}{3\sqrt{2\pi}}\,N^{3/2}
+o(N^{3/2}),
\end{equation}
uniformly in \(d\in A-A\).

Substituting \eqref{eq:sum-sigma} into \eqref{eq:Var-expansion-rigorous}, we obtain
\[
\operatorname{Var}\!\big(W_N(A)\big)
=
\frac{8\lambda |A|^2}{3\sqrt{2\pi}}\,N^{3/2}
+o(N^{3/2}),
\]
which proves \eqref{eq:WN-var-sharp}.\qed
\end{proof}

We now apply the Malliavin--Stein bound. Define
\[
  F_N^{(A)}:=\frac{W_N(A)-\EE[W_N(A)]}{\sigma_{N,W}}
  =I_1\!\Big(\frac{f_N}{\sigma_{N,W}}\Big).
\]
Since $F_N^{(A)}$ belongs to the first-order chaos, one has
\[
  L^{-1}F_N^{(A)}=-F_N^{(A)},
  \qquad
  D_wF_N^{(A)}=\frac{1}{\sigma_{N,W}}f_N(w),
\]
and $D_wF_N^{(A)}$ is deterministic.
This implies:
\begin{enumerate}
    \item The first term in the Malliavin--Stein bound \eqref{SB2} vanishes identically:
    \[
    \mathbb{E}\left[ \left| 1 - \langle D F_N^{(A)}, -D L^{-1} F_N^{(A)} \rangle_{L^2(\mu)} \right| \right]
    = \left| 1 - \| \sigma_{N,W}^{-1} f_N \|_{L^2(\mu)}^2 \right|
    = | 1 - 1 | = 0.
    \]
    \item The Wasserstein distance is controlled solely by the remainder term involving the third moment:
    \begin{equation}\label{eq:Stein-FirstChaos}
    d_W(F_N^{(A)}, Z) \le \EE \int_\WW |D_w F_N^{(A)}|^3 \, \mu(\mathrm{d}w)
    = \frac{1}{\sigma_{N,W}^3} \int_\WW |f_N(w)|^3 \, \mu(\mathrm{d}w).
    \end{equation}
\end{enumerate}

We now estimate $\int f_N^3 \, \mathrm{d}\mu$.
Using the definition of $f_N$ and the idempotence property of indicator functions ($\mathbf{1}_E^k = \mathbf{1}_E$), we expand the cube of the sum:
\begin{align}
f_N(w)^3
&= \left( \sum_{n=1}^N \mathbf{1}_{C_{n,A}}(w) \right)^3 \notag \\
&= \sum_{n=1}^N \mathbf{1}_{C_{n,A}}(w)
 + 6 \sum_{1 \le n < m \le N} \mathbf{1}_{C_{n,A}}(w)\mathbf{1}_{C_{m,A}}(w) \notag \\
&\quad + 6 \sum_{1 \le n < m < \ell \le N} \mathbf{1}_{C_{n,A}}(w)\mathbf{1}_{C_{m,A}}(w)\mathbf{1}_{C_{\ell,A}}(w). \label{eq:cube-expansion}
\end{align}
Integrating with respect to $\mu$ yields three terms, denoted by $T_1, T_2, T_3$.
\begin{enumerate}[label=(\roman*)]
    \item \textbf{Linear term ($T_1$):}
    $T_1=\sum_{n=1}^N\mu(C_{n,A})=N\lambda|A|\lesssim N$.
    \item \textbf{Quadratic term ($T_2$):} Using \eqref{eq:muCnCm-rigorous} and Lemma~\ref{lem:heat-kernel-rigorous}, we have
   \[
  T_2
  =6\sum_{1\le n<m\le N}\mu\big(C_{n,A}\cap C_{m,A}\big)
  \lesssim \sum_{1\le n<m\le N}\sum_{x,y\in A}Q_{m-n}(y-x)
  \lesssim N^{3/2}.
\]
    \item \textbf{Cubic term ($T_3$):}
    This is the dominant term. By the Markov property of the random walks,
    \[
    \mu(C_{n,A} \cap C_{m,A} \cap C_{\ell,A})
    = \lambda \sum_{x,y,z \in A} Q_{m-n}(y-x) Q_{\ell-m}(z-y).
    \]
   Using Lemma~\ref{lem:heat-kernel-rigorous} twice gives
\[
  \mu\big(C_{n,A}\cap C_{m,A}\cap C_{\ell,A}\big)
  \lesssim (m-n)^{-1/2}(\ell-m)^{-1/2}.
\]
Therefore,
\begin{align*}
  T_3
  &\lesssim \sum_{1\le n<m<\ell\le N}\frac{1}{\sqrt{m-n}}\frac{1}{\sqrt{\ell-m}} \\
  &= \sum_{t_1,t_2\ge1}\frac{(N-t_1-t_2)_+}{\sqrt{t_1t_2}}
   \le N\Big(\sum_{t=1}^N t^{-1/2}\Big)^2 \\ &
   \lesssim N\cdot (N^{1/2})^2
   \asymp N^2.
\end{align*}
\end{enumerate}

Combining the three terms yields
\begin{equation}\label{eq:f3-bound}
  \int_{\WW}|f_N(w)|^3\,\mu(\mathrm dw)\lesssim N^2.
\end{equation}

Finally, substituting \eqref{eq:f3-bound} and \eqref{eq:Var-order-rigorous} into \eqref{eq:Stein-FirstChaos} gives
\[
  d_W(F_N^{(A)},Z)
   \lesssim \frac{N^2}{\sigma_{N,W}^3}
   \lesssim \frac{N^2}{(N^{3/2})^{3/2}}
   =N^{-1/4}.
\]
This completes the proof of Theorem~\ref{thm:WN}(a).

\subsection{Proof of Theorem \ref{thm:WN}(b): the functional $D_N(A)$}\label{subsec:proof-DN}

Recall the ``hitting'' cylinder event
\[
  H_{N,A}
  =
  \Bigl\{w\in\WW:\ \exists\, m\in\{1,\dots,N\}\ \text{such that}\ w(m)\in A\Bigr\}
  \in\mathcal W,
\]
and set $g_N(w):=\mathbf 1_{H_{N,A}}(w)$.

Then
\begin{equation}\label{eq:DN-proof-chaos0}
  D_N(A)=\eta\big(H_{N,A}\big)=\int_{\WW}g_N(w)\,\eta(\mathrm dw),
\end{equation}
so that
\begin{equation}\label{eq:DN-proof-chaos}
  D_N(A)-\EE[D_N(A)]
  =\eta\big(H_{N,A}\big)-\mu\big(H_{N,A}\big)
  =\int_{\WW}g_N(w)\,\hat\eta(\mathrm dw)
  =:I_1(g_N).
\end{equation}
Consequently,
\begin{equation}\label{eq:DN-proof-var}
  \sigma_{N,D}^2
  :=\operatorname{Var}(D_N(A))
  =\|g_N\|_{L^2(\mu)}^2
  =\mu\big(H_{N,A}\big).
\end{equation}

We next identify the exact growth of $\mu(H_{N,A})$.
Let $\widetilde X=(\widetilde X_m)_{m\ge0}$ be a one-dimensional simple symmetric random walk,
and define
\[
  \tau_A^{+}:=\inf\{m\ge1:\ \widetilde X_m\in A\}.
\]
Then, by the definition of $\mu=\lambda\sum_{x\in\ZZ}\PP_x$,
\begin{equation}\label{eq:DN-proof-mu}
  \mu\big(H_{N,A}\big)
  =\lambda\sum_{x\in\ZZ}\PP_x(\tau_A^{+}\le N).
\end{equation}

Introduce the ranges
\[
  \mathcal R_N^{+}:=\{\widetilde X_1,\dots,\widetilde X_N\},
  \qquad
  \mathcal R_N:=\{\widetilde X_0,\widetilde X_1,\dots,\widetilde X_N\},
  \qquad
  R_N:=|\mathcal R_N|.
\]
By translation invariance,
\[
  \PP_x(\tau_A^{+}\le N)
  =\PP_0\big(\mathcal R_N^{+}\cap (A-x)\neq\varnothing\big)
  =\PP_0\big(x\in A-\mathcal R_N^{+}\big),
\]
and hence \eqref{eq:DN-proof-mu} becomes
\begin{equation}\label{eq:DN-proof-range}
  \mu\big(H_{N,A}\big)
  =\lambda\,\EE\big[\,|A-\mathcal R_N^{+}|\,\big].
\end{equation}

Since $\mathcal R_N=\mathcal R_N^{+}\cup\{0\}$, we have
\[
  \bigl||A-\mathcal R_N^{+}|-|A-\mathcal R_N|\bigr|
  \le |A|,
\]
and therefore
\begin{equation}\label{eq:DN-proof-range-plus}
  \mu\big(H_{N,A}\big)
  =\lambda\,\EE\big[\,|A-\mathcal R_N|\,\big]+O(1).
\end{equation}

Now write
\[
  m_N:=\min_{0\le k\le N}\widetilde X_k,
  \qquad
  M_N:=\max_{0\le k\le N}\widetilde X_k.
\]
Since $\widetilde X$ is nearest-neighbour,
\[
  \mathcal R_N=[m_N,M_N]\cap\ZZ,
  \qquad
  R_N=M_N-m_N+1.
\]
Let
\[
  a_-:=\min A,\qquad a_+:=\max A,
\]
and fix any $a_0\in A$. Then
\[
  a_0-\mathcal R_N\subset A-\mathcal R_N
  \subset [a_- - M_N,\ a_+ - m_N]\cap\ZZ.
\]
Taking cardinalities gives
\[
  R_N\le |A-\mathcal R_N|\le R_N+(a_+-a_-).
\]
Hence
\begin{equation}\label{eq:DN-proof-range-RN}
  \mu\big(H_{N,A}\big)=\lambda\,\EE[R_N]+O(1).
\end{equation}

It remains to identify the asymptotics of $\EE[R_N]$. By symmetry,
\[
  \EE[R_N]=2\EE[M_N]+1.
\]
Moreover, Donsker's invariance principle yields
\[
  \frac{M_N}{\sqrt N}\Rightarrow \sup_{0\le t\le1}B_t,
\]
where $B$ is standard Brownian motion. By the Brownian reflection principle,
\[
  \sup_{0\le t\le1}B_t \stackrel{d}= |Z|,
  \qquad Z\sim\mathcal N(0,1).
\]
On the other hand, the discrete reflection-principle estimate implies that for all $x\ge0$,
\[
  \PP\big(M_N\ge x\sqrt N\big)
  \le 2\,\PP\big(\widetilde X_N\ge x\sqrt N\big)
  \le 2e^{-x^2/2},
\]
where the last inequality follows, for example, from Hoeffding's bound.
Thus $(M_N/\sqrt N)_{N\ge1}$ is uniformly integrable, and therefore
\[
  \frac{\EE[M_N]}{\sqrt N}
  \longrightarrow
  \EE\!\Big[\sup_{0\le t\le1}B_t\Big]
  =
  \EE|Z|
  =\sqrt{\frac{2}{\pi}}.
\]
Consequently,
\[
  \EE[R_N]\sim 2\sqrt{\frac{2}{\pi}}\,N^{1/2}.
\]
Together with \eqref{eq:DN-proof-range-RN}, this yields the sharp variance asymptotic
\begin{equation}\label{eq:DN-proof-sharp}
  \sigma_{N,D}^2
  =\operatorname{Var}(D_N(A))
  \sim 2\lambda\sqrt{\frac{2}{\pi}}\,N^{1/2}.
\end{equation}
In particular,
\begin{equation}\label{eq:DN-proof-order}
  \sigma_{N,D}\asymp N^{1/4}.
\end{equation}

Finally, define
\[
  G_N^{(A)}:=\frac{D_N(A)-\EE[D_N(A)]}{\sigma_{N,D}}
  =I_1\!\Big(\frac{g_N}{\sigma_{N,D}}\Big).
\]
Again $G_N^{(A)}$ belongs to the first Wiener chaos, so
\[
  L^{-1}G_N^{(A)}=-G_N^{(A)},
  \qquad
  D_wG_N^{(A)}=\frac{1}{\sigma_{N,D}}\,g_N(w)
  =\frac{1}{\sigma_{N,D}}\,\mathbf 1_{H_{N,A}}(w),
\]
and the derivative is deterministic. Hence the first term in \eqref{SB2} vanishes and
\begin{equation}\label{eq:DN-proof-MS}
  d_W\bigl(G_N^{(A)},Z\bigr)
  \le
  \int_{\WW}|D_wG_N^{(A)}|^3\,\mu(\mathrm dw)
  =
  \frac{1}{\sigma_{N,D}^3}\int_{\WW}\mathbf 1_{H_{N,A}}(w)\,\mu(\mathrm dw).
\end{equation}
Using \eqref{eq:DN-proof-var}, we obtain
\[
  d_W\bigl(G_N^{(A)},Z\bigr)
  \le
  \frac{\mu(H_{N,A})}{\sigma_{N,D}^3}
  =
  \frac{1}{\sigma_{N,D}}
  \lesssim N^{-1/4},
\]
where the last step follows from \eqref{eq:DN-proof-order}. This proves
Theorem~\ref{thm:WN}(b).
\section{Proof of Theorem \ref{thm:Yk-main} }\label{sec:Yk}

Throughout this section, fix an integer $k\in \NN$ and a deterministic polynomial
\[
  \varphi:\NN_0\to\RR,
  \qquad
  \varphi(x)=\sum_{j=0}^k \beta_jx^j,
  \qquad
  \beta_k\neq 0.
\]
We consider
\begin{equation}\label{eq:Yk-def}
  Y_{N,\varphi}:=\sum_{n=1}^N \varphi(\xi(n,S_n)),
  \qquad
  \sigma_{N,\varphi}^2:=\operatorname{Var}(Y_{N,\varphi}),
  \qquad
  F_{N,\varphi}:=\frac{Y_{N,\varphi}-\EE[Y_{N,\varphi}]}{\sigma_{N,\varphi}}.
\end{equation}
We prove an annealed CLT for $Y_{N,\varphi}$ with Wasserstein rate $N^{-1/2}$ when the sampling walk
$S$ has non-zero drift. The proof follows the standard Malliavin--Stein route:
(i) derive a conditional Poisson-chaos expansion for $Y_{N,\varphi}$; (ii) show $\sigma_{N,\varphi}^2\asymp N$;
(iii) apply an annealed Malliavin--Stein bound; (iv) estimate the two resulting terms.

\subsection{Auxiliary estimates}\label{sec:auxiliary}

We collect a few drift-induced estimates that will be used repeatedly below. Recall that $S=(S_n,n\in\NN_0)$ is an independent nearest--neighbour walk on $\ZZ$ with i.i.d.\ increments
$\Delta_n=S_n-S_{n-1}\in\{-1,+1\}$ satisfying
\begin{equation}\label{eq:drift-assumption-Yk}
  \PP_S(\Delta_n=+1)=\mathfrak p\neq\tfrac12,\qquad
  \PP_S(\Delta_n=-1)=1-\mathfrak p,\qquad
  v:=\EE_S[\Delta_1]=2\mathfrak p-1\neq0.
\end{equation}

\begin{lemma}\label{lem:ballistic-Y2}
Under \eqref{eq:drift-assumption-Yk}, there exists $c_0:=c_0(v)>0$ such that for all $n\ge1$,
\[
  \PP_S\Big(\big|S_n-v n\big|\ge \tfrac{|v|}{2}n\Big)\le 2e^{-c_0 n}.
\]
In particular,
\[
  \PP_S\Big(|S_n|\le \tfrac{|v|}{2}n\Big)\le 2e^{-c_0 n}.
\]
\end{lemma}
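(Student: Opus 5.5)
The plan is to apply Hoeffding's inequality to the centred bounded increments. Since $S_n-vn=\sum_{i=1}^n(\Delta_i-v)$ is a sum of i.i.d.\ centred variables, each taking values in an interval of length $2$ (namely $\Delta_i-v\in[-1-v,1-v]$), Hoeffding's inequality will give, for every $s>0$,
\[
  \PP_S\bigl(|S_n-vn|\ge s\bigr)\le 2\exp\!\Big(-\frac{2s^2}{n\cdot 2^2}\Big)=2\exp\!\Big(-\frac{s^2}{2n}\Big).
\]
We will then choose $s=\tfrac{|v|}{2}n$, which yields the bound $2e^{-v^2n/8}$. So the first claim holds with $c_0:=v^2/8>0$, and this is positive precisely because $v\neq0$, i.e.\ $\mathfrak p\neq\tfrac12$.

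For the second claim, we will use an inclusion of events. If $|S_n|\le \tfrac{|v|}{2}n$, then the reverse triangle inequality gives
\[
  |S_n-vn|\ge |v|n-|S_n|\ge \tfrac{|v|}{2}n.
\]
Hence $\{|S_n|\le \tfrac{|v|}{2}n\}\subset\{|S_n-vn|\ge\tfrac{|v|}{2}n\}$, and the second bound will follow from the first with the same constant $c_0$.

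There is no real obstacle here. The only points to check are that the Hoeffding constant uses the range length $2$ of the increments, and that the resulting bound holds for all $n\ge1$ and not only asymptotically. Both are immediate from the non-asymptotic form of Hoeffding's inequality. A Chernoff/Cramér bound would serve equally well and could give a sharper constant, but that is not needed.
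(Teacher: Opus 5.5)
Your proposal is correct and follows the paper's proof exactly: Hoeffding's inequality applied to $\sum_{i=1}^n(\Delta_i-v)$ for the first bound, and the inclusion $\{|S_n|\le \tfrac{|v|}{2}n\}\subseteq\{|S_n-vn|\ge \tfrac{|v|}{2}n\}$ via the reverse triangle inequality for the second. Your explicit constant $c_0=v^2/8$, obtained from the increment range length $2$, is also correct.
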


\begin{proof}
Apply Hoeffding's inequality to $\sum_{i=1}^n(\Delta_i-v)$ to obtain the first bound.
For the second, note that on $\{|S_n|\le \frac{|v|}{2}n\}$,
\[
  |S_n-v n|\ge |v|n-|S_n|\ge \frac{|v|}{2}n,
\]
so $\{|S_n|\le \frac{|v|}{2}n\}\subseteq \{|S_n-v n|\ge \frac{|v|}{2}n\}$.
\qed\end{proof}

\begin{lemma}[Exponential moment decay under drift]\label{lem:moments-decay-Yk}
Fix $m\in\NN$. Under \eqref{eq:drift-assumption-Yk}, there exist constants $c:=c(v),C:=C(m)\in(0,\infty)$ such
that for all $r\ge1$,
\[
  \EE_S\big[Q_r(S_r)^m\big]\le C\,r^{-m/2}\,e^{-cr}.
\]
In particular, for every fixed $m$ the sequence $r\mapsto \EE_S[Q_r(S_r)^m]$ is summable and
$r\mapsto r\,\EE_S[Q_r(S_r)^m]$ is also summable.
\end{lemma}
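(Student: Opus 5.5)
The plan is to split the expectation according to whether the sampling walk is near the origin, in the spirit of Lemma~\ref{lem:ballistic-Y2}. Let $G_r:=\{|S_r|>\tfrac{|v|}{2}r\}$. Then
\[
  \EE_S\big[Q_r(S_r)^m\big]=\EE_S\big[Q_r(S_r)^m\mathbf 1_{G_r}\big]+\EE_S\big[Q_r(S_r)^m\mathbf 1_{G_r^c}\big].
\]
We bound each term using only the heat-kernel estimate of Lemma~\ref{lem:heat-kernel-rigorous},
\[
  Q_r(x)\le C r^{-1/2}e^{-x^2/(2r)},
\]
together with the exponential tail from Lemma~\ref{lem:ballistic-Y2}.

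\textbf{Ballistic event.} On $G_r$ we have $S_r^2/(2r)>v^2r/8$. Hence, pointwise,
\[
  Q_r(S_r)^m\le C^m r^{-m/2}e^{-m v^2 r/8}\le C^m r^{-m/2}e^{-v^2 r/8},
\]
so the first term is at most $C^m r^{-m/2}e^{-v^2r/8}$.

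\textbf{Complementary event.} Here we bound $Q_r(S_r)^m\le C^m r^{-m/2}$ and invoke Lemma~\ref{lem:ballistic-Y2}, which gives $\PP_S(G_r^c)\le 2e^{-c_0 r}$. The second term is therefore at most $2C^m r^{-m/2}e^{-c_0 r}$.

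\textbf{Combining.} Set $c:=\min(c_0,v^2/8)>0$; it depends only on $v$, since $c_0=c_0(v)$. Adding the two bounds gives
\[
  \EE_S\big[Q_r(S_r)^m\big]\le (C^m+2C^m)\,r^{-m/2}e^{-cr}=:C(m)\,r^{-m/2}e^{-cr}.
\]
Summability of $r\mapsto \EE_S[Q_r(S_r)^m]$ and of $r\mapsto r\,\EE_S[Q_r(S_r)^m]$ then follows because $\sum_r r^{1-m/2}e^{-cr}<\infty$ for any $c>0$.

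The only delicate point is making sure the constants have the stated dependence. The constant in Lemma~\ref{lem:heat-kernel-rigorous} is uniform in $x$ and $r\ge1$, so $C(m)=3C^m$ depends only on $m$. The exponential rate $c$ comes from the Hoeffding constant $c_0(v)$ and from the Gaussian factor $v^2/8$, so it depends only on $v$. I do not expect any further difficulty.
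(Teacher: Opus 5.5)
Your proof is correct and is essentially the paper's argument: the paper splits on $E_r=\{|S_r|\le \tfrac{|v|}{2}r\}$ (your $G_r^c$). On $E_r$ it uses $\sup_x Q_r(x)\le Cr^{-1/2}$ together with the tail bound of Lemma~\ref{lem:ballistic-Y2}, and on $E_r^c$ it uses the Gaussian factor in Lemma~\ref{lem:heat-kernel-rigorous}. Your constant bookkeeping, using $e^{-mv^2r/8}\le e^{-v^2r/8}$ so that $c$ depends only on $v$, is if anything slightly more careful than the paper's.
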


\begin{proof}
Let $\theta:=|v|/2>0$ and split according to $E_r:=\{|S_r|\le \theta r\}$.
On $E_r$, $\sup_x Q_r(x)\le C r^{-1/2}$ and by Lemma~\ref{lem:ballistic-Y2}, we have 
\[
  \EE\big[Q_r(S_r)^m\mathbf 1_{E_r}\big]
  \le C r^{-m/2}\PP(E_r)
  \le C r^{-m/2} e^{-cr}.
\] On $E_r^c$ we use Lemma~\ref{lem:heat-kernel-rigorous}:
\[
  Q_r(S_r)^m
  \le C r^{-m/2}\exp\!\Big(-\frac{m S_r^2}{C r}\Big)
  \le C r^{-m/2}\exp\!\Big(-\frac{m\theta^2}{C}\,r\Big)
  \le C r^{-m/2}e^{-cr},
\]
hence $\EE[Q_r(S_r)^m\mathbf 1_{E_r^c}]\le C r^{-m/2}e^{-cr}$.
\qed\end{proof}

We also record a multi-convolution counting bound used repeatedly.

\begin{lemma}\label{lem:conv-count-Yk}
Let $M\in\NN$ and let $(u^{(j)}_n)_{n\ge1}$, $j=1,\dots,M$, be nonnegative sequences with
$\sum_{n\ge1}u^{(j)}_n<\infty$ for every $j$. Then for all $N\ge1$,
\[
  \sum_{n_1,\dots,n_M\ge1}(N-n_1-\cdots-n_M)_+\prod_{j=1}^M u^{(j)}_{n_j}
  \;\le\;
  N\prod_{j=1}^M\sum_{n\ge1}u^{(j)}_n.
\]
\end{lemma}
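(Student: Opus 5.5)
The bound follows by counting: we count, for each fixed $(n_1,\dots,n_M)$ with nonzero weight, how many times it contributes, and then sum. Since all terms are nonnegative, we may interchange sums freely (Tonelli).

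First, note that $(N-n_1-\cdots-n_M)_+ \le N$ for all $n_j\ge1$. Indeed, the positive part is either $0$ or equal to $N-\sum_j n_j<N$.

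Second, the summand vanishes unless $n_1+\cdots+n_M<N$. Dropping that restriction only enlarges the sum, since all terms are nonnegative. This gives
\[
\sum_{n_1,\dots,n_M\ge1}(N-n_1-\cdots-n_M)_+\prod_{j=1}^M u^{(j)}_{n_j}
\le N\sum_{n_1,\dots,n_M\ge1}\prod_{j=1}^M u^{(j)}_{n_j}.
\]

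Finally, the multiple series of a product of nonnegative terms factorises by Tonelli's theorem:
\[
\sum_{n_1,\dots,n_M\ge1}\prod_{j=1}^M u^{(j)}_{n_j}=\prod_{j=1}^M\sum_{n\ge1}u^{(j)}_n .
\]
Each factor on the right is finite by the summability assumption, so the product is finite and the claimed inequality holds.

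No step here is a real obstacle; the only care needed is to invoke nonnegativity so that the interchange of the infinite sums is justified. In later applications the sequences will be $u^{(j)}_n=\EE_S[Q_n(S_n)^{m_j}]$, or products of such terms, whose summability is guaranteed by Lemma~\ref{lem:moments-decay-Yk}. Typically the sum arises from ordered time indices $1\le t_0<t_1<\dots<t_M\le N$: one writes the consecutive gaps as $n_j=t_j-t_{j-1}$, and for fixed gaps the number of admissible starting times $t_0$ is at most $(N-n_1-\cdots-n_M)_+$.
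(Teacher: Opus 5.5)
Your proof is correct and is exactly the paper's argument: bound $(N-n_1-\cdots-n_M)_+\le N$ pointwise, then factor the resulting nonnegative multiple series by Tonelli's theorem. Your opening sentence about ``counting'' how many times each tuple contributes is not actually used, since the argument you give is just this pointwise bound, so you can drop it.
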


\begin{proof}
Use the fact $(N-n_1-\cdots-n_M)_+\le N$ and Tonelli's theorem.
\qed\end{proof}

\subsection{Conditional chaos expansion}

In this subsection we derive a conditional Poisson-chaos expansion for $Y_{N,\varphi}$. We now work on the product probability space with law $\PP=\PP_\xi\otimes\PP_S$ and expectation $\EE$.
For a fixed realization of $S$, write
\[
 \mathcal A_n:=C_{n,S_n},\qquad n=1,\dots,N,
\]
where the cylinder set $C_{n,x}:=\{w\in\WW:w(n)=x\}$. For later use, note that for all $1\le n,m\le N$,
\begin{equation}
  \mu(\mathcal A_n)=\lambda,
  \qquad
  \mu(\mathcal A_n\cap \mathcal A_m)=\lambda Q_{|m-n|}(S_m-S_n).
\end{equation}
We begin with its expectation.

\begin{lemma}\label{lem:Yk-mean}
For every $N\ge1$,
\[
  \EE_\xi[Y_{N,\varphi}\mid S]
  =N\,\EE\big[\varphi(\mathrm{Poi}(\lambda))\big]
  \qquad \text{a.s.}
\]
In particular,
\[
  \EE[Y_{N,\varphi}]
  =N\,\EE\big[\varphi(\mathrm{Poi}(\lambda))\big].
\]
\end{lemma}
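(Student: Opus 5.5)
The claim follows from independence of $S$ and the Poisson field, combined with the fixed-time marginal law of $\xi$. Fix a realization of the sampling path $S=(S_n)_{n\ge0}$. Because $S$ is independent of $\eta$ (equivalently of $\xi$), conditioning on $S$ leaves the law of $\eta$ unchanged: it is still a Poisson random measure on $\WW$ with intensity $\mu$. Consequently, for each $n$, the conditional law of $\xi(n,S_n)=\eta(\mathcal A_n)$ given $S$ is that of $\eta(C_{n,x})$ evaluated at $x=S_n$.

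From the \emph{Marginals} item in the preliminaries, $\mu(C_{n,x})=\lambda$ for every $(n,x)$, so $\eta(C_{n,x})\sim\mathrm{Poi}(\lambda)$. Hence, for each $n$,
\[
\EE_\xi\big[\varphi(\xi(n,S_n))\mid S\big]=\EE\big[\varphi(\mathrm{Poi}(\lambda))\big]\quad\text{a.s.}
\]
Since $\varphi$ is a polynomial and Poisson variables have moments of all orders, every term is integrable. Linearity of conditional expectation, applied to the finite sum defining $Y_{N,\varphi}$, then gives $\EE_\xi[Y_{N,\varphi}\mid S]=N\,\EE[\varphi(\mathrm{Poi}(\lambda))]$ a.s.

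Taking expectations with respect to $\PP_S$ (the tower property on the product space $\PP_\xi\otimes\PP_S$) yields the unconditional identity. This step is trivial because the conditional expectation is a deterministic constant.

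The only point needing care is making the conditioning step rigorous. I would do this via Fubini on the product space: write $\EE_\xi[\varphi(\eta(C_{n,S_n}))\mid S]=h_n(S_n)$ with $h_n(x):=\EE_\xi[\varphi(\eta(C_{n,x}))]$. The map $(\eta,x)\mapsto\varphi(\eta(C_{n,x}))$ is measurable because $J_{n,x}$ is measurable and $x$ ranges over the countable set $\ZZ$. The integrability needed for Fubini follows from $|\varphi(y)|\le C(1+y^k)$ together with the finiteness of Poisson moments, uniformly in $x$. I expect no real obstacle here.
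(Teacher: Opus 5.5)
Your proposal is correct and follows essentially the same route as the paper: condition on $S$, use independence together with $\xi(n,x)\sim\mathrm{Poi}(\lambda)$ for every fixed $(n,x)$ (the paper writes this as $\sum_{x\in\ZZ}\mathbf 1_{\{S_n=x\}}\,\EE_\xi[\varphi(\xi(n,x))]$, which is exactly your $h_n(S_n)$), sum over $n$, and then average over $S$. Your Fubini, measurability and integrability remarks make the conditioning step slightly more explicit than the paper does, but the argument is the same.
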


\begin{proof}
Conditionally on $S$, for each $n=1,\dots,N$,
\[
  \EE_\xi[\varphi(\xi(n,S_n))\mid S]
  =\sum_{x\in\ZZ}\mathbf 1_{\{S_n=x\}}\,\EE_\xi[\varphi(\xi(n,x))]
  =\EE[\varphi(\mathrm{Poi}(\lambda))],
\]
since $\xi(n,x)\sim\mathrm{Poi}(\lambda)$ for every fixed $(n,x)$.
Summing over $n$ yields
\[
  \EE_\xi[Y_{N,\varphi}\mid S]
  =N\,\EE[\varphi(\mathrm{Poi}(\lambda))].
\]
Taking expectation with respect to $S$ gives the unconditional identity.\qed
\end{proof}

To expand single-site observables in Poisson chaoses, we use the orthogonal Charlier basis on
$\mathrm{Poi}(\lambda)$ together with an It\^o-type identity for indicator kernels.

Let \(C_n^{\mathrm{std}}(x;\lambda)\) denote the standard Charlier polynomial
as in \cite{MR2656096}, and define the corresponding monic
Charlier polynomial by
\[
  C_n(x;\lambda):=(-\lambda)^n C_n^{\mathrm{std}}(x;\lambda),
  \qquad n\ge 0.
\]
Then
\[
  C_0(x;\lambda)\equiv 1,
  \qquad
  C_1(x;\lambda)=x-\lambda.
\]
Moreover, by rescaling the standard formulas
\cite[Equation (9.14.1)--(9.14.3)]{MR2656096},
the monic Charlier polynomials satisfy the three-term recurrence
\begin{equation}\label{eq:Charlier-recursion}
  (x-\lambda)\,C_r(x;\lambda)
  = C_{r+1}(x;\lambda)+r\,C_r(x;\lambda)+r\lambda\,C_{r-1}(x;\lambda),
  \qquad r\ge 1.
\end{equation}
Let $\mathcal A\in\mathcal W$ satisfy $\mu(\mathcal A)=\lambda$ and set $N_\mathcal{A}:=\eta(\mathcal A)\sim\mathrm{Poi}(\lambda)$, then the orthogonality relation for the
standard Charlier polynomials, rewritten in terms of Poisson expectation,
yields
\begin{equation}\label{eq:Charlier-orth}
  \EE\!\left[C_r(N_\mathcal{A};\lambda)\right]=0,
  \qquad r\ge 1,
\end{equation}
and
\begin{equation}\label{eq:Charlier-orth2}
  \EE\!\left[C_r(N_\mathcal{A};\lambda)\,C_s(N_\mathcal{A};\lambda)\right]
  = \mathbf 1_{\{r=s\}}\, r!\,\lambda^r,
  \qquad r,s\ge 0.
\end{equation}

\begin{lemma}[Charlier--It\^o identity for indicator kernels]\label{lem:Charlier-Ito-indicator}
For every $r\ge1$,
\begin{equation}\label{eq:Charlier-Ito}
  I_r\big(\mathbf 1_\mathcal{A}^{\otimes r}\big)=C_r(N_\mathcal{A};\lambda).
\end{equation}
In particular,
\[
  I_1(\mathbf 1_\mathcal{A})=N_\mathcal{A}-\lambda=C_1(N_\mathcal{A};\lambda),
  \qquad
  I_2(\mathbf 1_\mathcal{A}^{\otimes2})=N_\mathcal{A}^2-(2\lambda+1)N_\mathcal{A}+\lambda^2=C_2(N_\mathcal{A};\lambda).
\]
\end{lemma}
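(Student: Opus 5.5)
The plan is to compute $I_r(\mathbf 1_\mathcal A^{\otimes r})$ directly from the tensor-product formula \eqref{n:tensor} and then compare the result with the Charlier polynomials. With $g=\mathbf 1_\mathcal A$ we have $\mu(g)=\lambda$. Moreover, $\eta^{(k)}(\mathbf 1_\mathcal A^{\otimes k})=\eta^{(k)}(\mathcal A^k)=(N_\mathcal A)_k:=N_\mathcal A(N_\mathcal A-1)\cdots(N_\mathcal A-k+1)$, the falling factorial. Substituting into \eqref{n:tensor} gives
\[
  I_r(\mathbf 1_\mathcal A^{\otimes r})=\sum_{k=0}^r\binom rk(-\lambda)^{r-k}(N_\mathcal A)_k .
\]
It then suffices to show that the polynomial $P_r(x):=\sum_{k=0}^r\binom rk(-\lambda)^{r-k}(x)_k$ coincides with the monic Charlier polynomial $C_r(x;\lambda)$.

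For the identification I would use the recurrence \eqref{eq:Charlier-recursion} together with induction on $r$, since that recurrence together with $C_0=1$, $C_1=x-\lambda$ determines all $C_r$. The base cases are immediate: $P_0=1$ and $P_1=x-\lambda$. For the inductive step I will verify that $P_r$ satisfies the same three-term recurrence,
\[
(x-\lambda)P_r=P_{r+1}+rP_r+r\lambda P_{r-1}.
\]
I plan to prove this using the falling-factorial identity $x\,(x)_k=(x)_{k+1}+k\,(x)_k$ and then collecting the coefficients of each $(x)_j$.

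On the left-hand side, the coefficient of $(x)_j$ is
\[
\binom r{j-1}(-\lambda)^{r-j+1}+j\binom rj(-\lambda)^{r-j}-\lambda\binom rj(-\lambda)^{r-j}.
\]
On the right-hand side it is
\[
\binom{r+1}j(-\lambda)^{r+1-j}+r\binom rj(-\lambda)^{r-j}+r\lambda\binom{r-1}{j}(-\lambda)^{r-1-j}.
\]
Pascal's rule $\binom{r+1}{j}=\binom rj+\binom r{j-1}$ cancels the $\binom r{j-1}$ terms and the $-\lambda\binom rj$ terms. What remains is the identity $(j-r)\binom rj=-r\binom{r-1}j$, which holds because $(r-j)\binom rj=r\binom{r-1}{j}$. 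This bookkeeping is the only real step.

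As a shortcut that avoids the bookkeeping, one could instead use the generating function $\sum_r \frac{t^r}{r!}P_r(x)=e^{-\lambda t}(1+t)^x$, which is the known generating function of the monic Charlier polynomials in \cite{MR2656096}. I expect the main obstacle to be only this normalization check: the rescaling $C_r=(-\lambda)^rC_r^{\mathrm{std}}$ has to match the convention of \cite{MR2656096}, and I would confirm it through the explicit case $r=2$. In that case $P_2=(x)_2-2\lambda x+\lambda^2=x^2-(2\lambda+1)x+\lambda^2$, which agrees with the displayed formula in the lemma.
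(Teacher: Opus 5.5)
Your proof is correct, but it reaches the three-term recurrence by a different route from the paper.

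The paper never writes $I_r(\mathbf 1_\mathcal A^{\otimes r})$ explicitly. It applies the product formula \eqref{mutiIn} with $q_1=1$, $q_2=r$, where the only contractions are $\mathbf 1_\mathcal A\star_0^0\mathbf 1_\mathcal A^{\otimes r}=\mathbf 1_\mathcal A^{\otimes(r+1)}$, $\mathbf 1_\mathcal A\star_1^0\mathbf 1_\mathcal A^{\otimes r}=\mathbf 1_\mathcal A^{\otimes r}$ and $\mathbf 1_\mathcal A\star_1^1\mathbf 1_\mathcal A^{\otimes r}=\lambda\,\mathbf 1_\mathcal A^{\otimes(r-1)}$. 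The recurrence $(N_\mathcal A-\lambda)\mathcal P_r=\mathcal P_{r+1}+r\mathcal P_r+r\lambda\mathcal P_{r-1}$ then appears directly at the level of random variables, with no combinatorics. Matching initial data and the uniqueness of the recursion finish the proof.

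You instead use only the pathwise formula \eqref{n:tensor} and $\eta^{(k)}(\mathcal A^k)=N_\mathcal A(N_\mathcal A-1)\cdots(N_\mathcal A-k+1)$ to get an explicit polynomial in $N_\mathcal A$. You then verify the recurrence in the falling-factorial basis by hand. Your bookkeeping checks out, including the Pascal cancellations and the final identity $(r-j)\binom rj=r\binom{r-1}{j}$.

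The paper's route is shorter but relies on the full Poisson product formula and its integrability hypotheses, which are harmless here. Yours is more elementary and yields the closed form $I_r(\mathbf 1_\mathcal A^{\otimes r})=\sum_{k=0}^r\binom rk(-\lambda)^{r-k}N_\mathcal A(N_\mathcal A-1)\cdots(N_\mathcal A-k+1)$ as a by-product.

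That closed form is exactly $(-\lambda)^r$ times the hypergeometric representation $\sum_{k}\binom rk\,x(x-1)\cdots(x-k+1)\,(-1/\lambda)^k$ of the standard Charlier polynomial in \cite{MR2656096}. So you could skip the recurrence check and match the two directly; your generating-function remark is the same observation in another form.
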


\begin{proof}
Set $\mathcal P_r:=I_r(\mathbf 1_\mathcal{A}^{\otimes r})$ for $r\ge0$ with the convention $\mathcal P_0:=I_0(1)=1$.
Applying the Poisson product formula \eqref{mutiIn} with $q_1=1$, $q_2=r$, $f_1=\mathbf 1_\mathcal{A}$ and
$f_2=\mathbf 1_\mathcal{A}^{\otimes r}$ yields, for all $r\ge1$,
\begin{equation}\label{eq:P-recursion}
  I_1(\mathbf 1_\mathcal{A})\,I_r(\mathbf 1_\mathcal{A}^{\otimes r})
  =I_{r+1}(\mathbf 1_\mathcal{A}^{\otimes(r+1)})+r\,I_r(\mathbf 1_\mathcal{A}^{\otimes r})+r\lambda\,I_{r-1}(\mathbf 1_\mathcal{A}^{\otimes(r-1)}).
\end{equation}
Since $I_1(\mathbf 1_\mathcal{A})=N_\mathcal{A}-\lambda$, this can be rewritten as
\[
  (N_\mathcal{A}-\lambda)\,\mathcal P_r
  =
  \mathcal P_{r+1}+r\,\mathcal P_r+r\lambda\,\mathcal P_{r-1},
\]
with $\mathcal P_0=1$ and $\mathcal P_1=N_\mathcal{A}-\lambda$.
This coincides with the defining recursion and initial conditions of the monic Charlier polynomials.
Hence
\[
  \mathcal P_r=C_r(N_\mathcal{A};\lambda)\qquad\text{for all }r\ge1,
\]
which proves \eqref{eq:Charlier-Ito}.\qed
\end{proof}

We now expand the centered single-site polynomial observable
\[
  \varphi(\xi(n,x))-\EE[\varphi(\xi(n,x))].
\]

\begin{lemma}[Single-site polynomial chaos expansion and coefficients]\label{lem:single-site-k-chaos}
There exist unique coefficients
$c_{\varphi,1},\dots,c_{\varphi,k}$ such that
\begin{equation}\label{eq:single-site-k-chaos}
  \varphi(N_\mathcal{A})-\EE[\varphi(N_\mathcal{A})]
  =\sum_{q=1}^k c_{\varphi,q}\,I_q\big(\mathbf 1_\mathcal{A}^{\otimes q}\big).
\end{equation}
Moreover, these coefficients are given by
\begin{equation}\label{eq:ckq-Charlier-projection}
  c_{\varphi,q}
  =\frac{\EE\!\big[\varphi(N_\mathcal{A})\,C_q(N_\mathcal{A};\lambda)\big]}{q!\,\lambda^q},
  \qquad 1\le q\le k.
\end{equation}
In particular,
\[
  c_{\varphi,k}=\beta_k\neq 0.
\]
\end{lemma}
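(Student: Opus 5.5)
The plan is to expand $\varphi$ in the monic Charlier basis, which is exactly the orthogonal polynomial basis for $\mathrm{Poi}(\lambda)$. Each monic Charlier polynomial $C_r(\cdot;\lambda)$ has degree exactly $r$ with leading coefficient $1$. This follows by induction from the recurrence \eqref{eq:Charlier-recursion}, starting from $C_0\equiv1$ and $C_1(x;\lambda)=x-\lambda$: if $C_r$ and $C_{r-1}$ are monic of degrees $r$ and $r-1$, then $C_{r+1}=(x-\lambda-r)C_r-r\lambda C_{r-1}$ is monic of degree $r+1$. Consequently $\{C_0,\dots,C_k\}$ is a basis of the space of real polynomials of degree at most $k$. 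Since the change of basis from monomials is unitriangular, we can write uniquely
\[
  \varphi(x)=\sum_{q=0}^k c_{\varphi,q}\,C_q(x;\lambda),
\]
and the top coefficient is the leading coefficient of $\varphi$, namely $c_{\varphi,k}=\beta_k$.

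Next, substitute $x=N_\mathcal{A}$ and take expectations. By \eqref{eq:Charlier-orth}, $\EE[C_q(N_\mathcal{A};\lambda)]=0$ for $q\ge1$, so $\EE[\varphi(N_\mathcal{A})]=c_{\varphi,0}$. Subtracting this and applying the Charlier--It\^o identity, Lemma~\ref{lem:Charlier-Ito-indicator}, to each term $C_q(N_\mathcal{A};\lambda)=I_q(\mathbf 1_\mathcal{A}^{\otimes q})$ yields \eqref{eq:single-site-k-chaos}. The equality holds pointwise in $\eta$, because both sides are the same polynomial in $N_\mathcal{A}$.

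For the projection formula \eqref{eq:ckq-Charlier-projection}, multiply the expansion of $\varphi(N_\mathcal{A})$ by $C_q(N_\mathcal{A};\lambda)$ and take expectations. The orthogonality relation \eqref{eq:Charlier-orth2} kills all terms except index $q$, leaving $\EE[\varphi(N_\mathcal{A})C_q(N_\mathcal{A};\lambda)]=c_{\varphi,q}\,q!\,\lambda^q$.

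For uniqueness, suppose two coefficient vectors give the same identity \eqref{eq:single-site-k-chaos}. Taking the difference gives $\sum_q d_q I_q(\mathbf 1_\mathcal{A}^{\otimes q})=0$. The isometry \eqref{e:I_n} then gives $\sum_q d_q^2\,q!\,\lambda^q=0$, since $\|\mathbf 1_\mathcal{A}^{\otimes q}\|^2=\mu(\mathcal A)^q=\lambda^q$. Hence all $d_q=0$. Alternatively, one can use the linear independence of the $C_q$ as functions on the support $\NN_0$ of $\mathrm{Poi}(\lambda)$.

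The only genuinely delicate point is the identification of the leading coefficient. This requires that the monic normalization really has leading coefficient $1$, which is exactly what the recurrence induction above checks. Everything else is linear algebra plus the two cited orthogonality facts and Lemma~\ref{lem:Charlier-Ito-indicator}.
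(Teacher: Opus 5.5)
Your proposal is correct and follows essentially the same route as the paper: you expand $\varphi$ in the monic Charlier basis, read off the coefficients via the orthogonality relation \eqref{eq:Charlier-orth2}, convert each $C_q(N_\mathcal{A};\lambda)$ into $I_q(\mathbf 1_\mathcal{A}^{\otimes q})$ by Lemma~\ref{lem:Charlier-Ito-indicator}, and identify $c_{\varphi,k}=\beta_k$ from monicity. Two steps are more explicit than in the paper: the unitriangular-basis argument (with monicity checked by induction on \eqref{eq:Charlier-recursion}) shows directly that the expansion stops at degree $k$, which the paper takes from the $L^2(\mathrm{Poi}(\lambda))$-completeness of the Charlier system, and the isometry argument proves uniqueness at the level of the chaos identity itself.
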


\begin{proof}
Since $\{C_q(\cdot;\lambda)\}_{q\ge0}$ form a complete orthogonal system in
$L^2(\mathrm{Poi}(\lambda))$ and $\varphi$ is a polynomial of degree $k$, there exist unique
coefficients $d_{\varphi,q}$ such that
\[
  \varphi(N_\mathcal{A})-\EE[\varphi(N_\mathcal{A})]
  =
  \sum_{q=1}^k d_{\varphi,q}\,C_q(N_\mathcal{A};\lambda).
\]
Taking the inner product with $C_q(N_\mathcal{A};\lambda)$ and using \eqref{eq:Charlier-orth2} gives
\[
  d_{\varphi,q}
  =
  \frac{\EE[\varphi(N_\mathcal{A})\,C_q(N_\mathcal{A};\lambda)]}{\EE[C_q(N_\mathcal{A};\lambda)^2]}
  =
  \frac{\EE[\varphi(N_\mathcal{A})\,C_q(N_\mathcal{A};\lambda)]}{q!\lambda^q}.
\]
Setting $c_{\varphi,q}:=d_{\varphi,q}$ yields \eqref{eq:ckq-Charlier-projection}. By
Lemma~\ref{lem:Charlier-Ito-indicator},
\[
  C_q(N_\mathcal{A};\lambda)=I_q(\mathbf 1_\mathcal{A}^{\otimes q}),
\]
so the above expansion translates into \eqref{eq:single-site-k-chaos}. Finally, since $\varphi$ has
degree $k$ with leading coefficient $\beta_k$ and $C_k(\cdot;\lambda)$ is monic of degree $k$, we have
$c_{\varphi,k}=\beta_k\neq0$.\qed
\end{proof}

We can now sum the single-site expansion along the sampled path to obtain a conditional
Poisson-chaos expansion for $Y_{N,\varphi}$. For fixed $S$ and $q=1,\dots,k$, define
\begin{equation}\label{eq:FqS-def}
  f_{q,S}(w_1,\dots,w_q):=\sum_{n=1}^N \prod_{j=1}^q \mathbf 1_{\mathcal{A}_n}(w_j),
  \qquad (w_1,\dots,w_q)\in\WW^q.
\end{equation}
Each $f_{q,S}$ is symmetric. Moreover,
\[
  \|f_{q,S}\|_{L^2(\mu^{\otimes q})}^2
  =\sum_{n,m=1}^N \mu(\mathcal{A}_n\cap \mathcal{A}_m)^q
  \le N^2\lambda^q<\infty,
\]
so $f_{q,S}\in L_s^2(\mu^{\otimes q})$.

Conditionally on $S$, applying Lemma~\ref{lem:single-site-k-chaos} with $\mathcal{A}=\mathcal{A}_n$ and summing over
$n=1,\dots,N$ yields
\[
  Y_{N,\varphi}-\EE_\xi[Y_{N,\varphi}\mid S]
  =\sum_{q=1}^k c_{\varphi,q}\,I_q(f_{q,S}).
\]
Since Lemma~\ref{lem:Yk-mean} shows that $\EE_\xi[Y_{N,\varphi}\mid S]=\EE[Y_{N,\varphi}]$, we have 
\begin{equation}\label{eq:Yk-chaos}
   Y_{N,\varphi}-\EE[Y_{N,\varphi}]
  =\sum_{q=1}^k c_{\varphi,q}\,I_q(f_{q,S}),
  \qquad \text{for }\PP_S\text{-a.e.\ realization of }S,
\end{equation}
as an identity in $L^2(\PP_\xi)$.

\subsection{Variance growth: $\sigma_{N,\varphi}^2\asymp N$}

To normalize the chaos expansion, we will show that $\sigma_{N,\varphi}^2$ grows linearly in $N$ in this subsection.

For $q\ge1$ and $t\ge1$ define the drift-averaged sequence
\begin{equation}\label{eq:aqk-def}
  a_t^{(q)}:=\EE_S\big[Q_t(S_t)^q\big].
\end{equation}

\begin{lemma}[Annealed variance]\label{lem:Yk-Var}
Under \eqref{eq:drift-assumption-Yk}, there exists $\mathfrak c_\varphi\in(0,\infty)$ such that
\[
  \sigma_{N,\varphi}^2=\operatorname{Var}(Y_{N,\varphi})=\mathfrak c_\varphi\,N+O(1),
  \qquad\text{hence}\qquad \sigma_{N,\varphi}^2\asymp N.
\]
More precisely,
\begin{equation}\label{eq:ck-constant}
  \mathfrak c_\varphi=\sum_{q=1}^k c_{\varphi,q}^2\,q!\,\lambda^q\Big(1+2\sum_{t\ge1}a_t^{(q)}\Big).
\end{equation}
\end{lemma}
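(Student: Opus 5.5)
We compute the annealed variance through the conditional chaos expansion \eqref{eq:Yk-chaos}, using the law of total variance. By Lemma~\ref{lem:Yk-mean} the conditional mean $\EE_\xi[Y_{N,\varphi}\mid S]$ is deterministic. Hence $\operatorname{Var}(Y_{N,\varphi})=\EE_S[\operatorname{Var}_\xi(Y_{N,\varphi}\mid S)]$, and there is no contribution from the fluctuation of the conditional mean.

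For fixed $S$, the orthogonality \eqref{e:I_n} of multiple integrals of different orders, together with \eqref{eq:Yk-chaos}, gives
\[
\operatorname{Var}_\xi(Y_{N,\varphi}\mid S)=\sum_{q=1}^k c_{\varphi,q}^2\,q!\,\|f_{q,S}\|^2_{L^2(\mu^{\otimes q})}
=\sum_{q=1}^k c_{\varphi,q}^2\,q!\sum_{n,m=1}^N \mu(\mathcal A_n\cap\mathcal A_m)^q .
\]
By the displayed identity $\mu(\mathcal A_n\cap\mathcal A_m)=\lambda Q_{|m-n|}(S_m-S_n)$, the diagonal terms $n=m$ contribute $N\lambda^q$, since $Q_0(0)=1$. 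Separating the diagonal and using the symmetry of the off-diagonal part gives
\[
\sum_{n,m=1}^N \mu(\mathcal A_n\cap\mathcal A_m)^q=N\lambda^q+2\lambda^q\sum_{1\le n<m\le N}Q_{m-n}(S_m-S_n)^q .
\]

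Next we take $\EE_S$. Because the increments of $S$ are i.i.d., $S_m-S_n$ has the same law as $S_{m-n}$. Therefore
\[
\EE_S\sum_{n<m}Q_{m-n}(S_m-S_n)^q=\sum_{t=1}^{N-1}(N-t)\,a_t^{(q)} .
\]
We then write
\[
\sum_{t=1}^{N-1}(N-t)a_t^{(q)}=N\sum_{t\ge1}a_t^{(q)}-N\sum_{t\ge N}a_t^{(q)}-\sum_{t=1}^{N-1}t\,a_t^{(q)} .
\]
By Lemma~\ref{lem:moments-decay-Yk}, both $a^{(q)}_t$ and $t\,a^{(q)}_t$ are summable with exponential decay. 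Hence the last term is $O(1)$. The middle term is also $O(1)$, since $N\sum_{t\ge N}a_t^{(q)}\le\sum_{t\ge N}t\,a_t^{(q)}$. Summing over $q\le k$ yields $\sigma_{N,\varphi}^2=\mathfrak c_\varphi N+O(1)$, with $\mathfrak c_\varphi$ as in \eqref{eq:ck-constant}.

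It remains to show that $\mathfrak c_\varphi\in(0,\infty)$. Finiteness follows from the summability above. For positivity, every term in \eqref{eq:ck-constant} is nonnegative, because $a_t^{(q)}\ge0$. The $q=k$ term is at least $\beta_k^2\,k!\,\lambda^k>0$, since $c_{\varphi,k}=\beta_k\neq0$ by Lemma~\ref{lem:single-site-k-chaos}. Consequently $\sigma^2_{N,\varphi}\asymp N$ for all large $N$. For small $N$ the variance is positive, because the $n=1$ summand already has positive variance, so the constants can be adjusted to cover all $N$.

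The only potentially delicate step is justifying that the conditional orthogonality survives annealing. This holds because the cross-order terms vanish for every fixed $S$ before we average, and integrability is trivial since $\|f_{q,S}\|^2\le N^2\lambda^q$.
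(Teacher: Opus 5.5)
Your proposal is correct and follows the paper's proof essentially step for step. It uses the law of total variance with the deterministic conditional mean from Lemma~\ref{lem:Yk-mean}, then chaos orthogonality to get $\sum_q c_{\varphi,q}^2\,q!\,\lambda^q\bigl(N+2\Sigma^{(q)}_S\bigr)$. Stationarity of increments yields $\sum_{t=1}^{N-1}(N-t)a_t^{(q)}$, the summability of $a_t^{(q)}$ and $t\,a_t^{(q)}$ from Lemma~\ref{lem:moments-decay-Yk} closes the estimate, and positivity comes from $c_{\varphi,k}=\beta_k\neq0$. Your explicit bound $N\sum_{t\ge N}a_t^{(q)}\le\sum_{t\ge N}t\,a_t^{(q)}$ simply spells out the tail term that the paper absorbs into its $o(1)$.
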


\begin{proof}
Since $\EE_\xi[Y_{N,\varphi}\mid S]=N\,\EE[\varphi(\mathrm{Poi}(\lambda))]$ is deterministic, the law of total
variance yields
\[
  \sigma_{N,\varphi}^2=\operatorname{Var}(Y_{N,\varphi})=\EE_S\big[\operatorname{Var}_\xi(Y_{N,\varphi}\mid S)\big].
\]
By \eqref{eq:Yk-chaos} and orthogonality of Poisson chaoses,
\begin{equation}\label{eq:VarYk-condS}
  \operatorname{Var}_\xi(Y_{N,\varphi}\mid S)
  =\sum_{q=1}^k c_{\varphi,q}^2\,q!\,\|f_{q,S}\|_{L^2(\mu^{\otimes q})}^2.
\end{equation}
We compute $\|f_{q,S}\|^2$ explicitly. Expanding \eqref{eq:FqS-def} and using product structure of
$\mu^{\otimes q}$,
\begin{align*}
  \|f_{q,S}\|_{L^2(\mu^{\otimes q})}^2
  &=\int_{\WW^q}\Big(\sum_{n=1}^N \prod_{j=1}^q \mathbf 1_{\mathcal{A}_n}(w_j)\Big)^2\,\mu^{\otimes q}(d\vec w) \\
  &=\sum_{n,m=1}^N \int_{\WW^q}\prod_{j=1}^q \mathbf 1_{\mathcal{A}_n\cap \mathcal{A}_m}(w_j)\,\mu^{\otimes q}(d\vec w)
   =\sum_{n,m=1}^N \mu(\mathcal{A}_n\cap \mathcal{A}_m)^q.
\end{align*}
Note $\mu(\mathcal{A}_n\cap \mathcal{A}_m)=\lambda Q_{|m-n|}(S_m-S_n)$, hence
\begin{equation}\label{F_q,s}
  \|f_{q,S}\|_{L^2(\mu^{\otimes q})}^2
  =\lambda^q\sum_{n,m=1}^N Q_{|m-n|}(S_m-S_n)^q
  =\lambda^q\Big(N+2\Sigma^{(q)}_S\Big),
\end{equation}
where
\[
  \Sigma^{(q)}_S:=\sum_{1\le n<m\le N}Q_{m-n}(S_m-S_n)^q.
\]
Taking $\EE_S$ and using stationarity of increments ($S_{n+t}-S_n\stackrel{d}=S_t$), we obtain
\[
  \EE[\Sigma^{(q)}_S]=\sum_{t=1}^{N-1}(N-t)\,a_t^{(q)}.
\]
Therefore,
\begin{equation}\label{eq:sigmak-expansion}
  \sigma_{N,\varphi}^2
  =\sum_{q=1}^k c_{\varphi,q}^2\,q!\,\lambda^q
    \Big(N+2\sum_{t=1}^{N-1}(N-t)a_t^{(q)}\Big).
\end{equation}

By Lemma~\ref{lem:moments-decay-Yk} with $m=q$, the sequence $a^{(q)}$ is summable and
$\sum_{t\ge1}t\,a_t^{(q)}<\infty$, hence
\[
  \sum_{t=1}^{N-1}(N-t)a_t^{(q)}
  =N\sum_{t\ge1}a_t^{(q)}-\sum_{t\ge1}t\,a_t^{(q)}+o(1)
  =N\sum_{t\ge1}a_t^{(q)}+O(1).
\]
Plugging this into \eqref{eq:sigmak-expansion} yields $\sigma_{N,\varphi}^2=\mathfrak c_\varphi N+O(1)$ with $\mathfrak c_\varphi$ given by
\eqref{eq:ck-constant}. Since $c_{\varphi,k}=\beta_k\neq 0$ and $a_t^{(q)}\ge0$ for all $q,t$, the $q=k$ term in
\eqref{eq:ck-constant} is strictly positive. Hence $\mathfrak c_\varphi>0$.
\qed\end{proof}

\subsection{Annealed Malliavin--Stein bound}

Our goal is to bound the Wasserstein distance between the law of $F_{N,\varphi}$ and the standard
Gaussian law under the joint measure $\PP=\PP_\xi\otimes\PP_S$. Unlike the fixed-region functionals
$W_N(A)$ and $D_N(A)$, the variable $F_{N,\varphi}$ depends on two independent sources of
randomness: the Poisson field $\eta$ and the sampling walk $S$. Therefore, under the annealed law,
$F_{N,\varphi}$ is not a Poisson functional of $\eta$ alone, so the standard Poisson
Malliavin--Stein bound \eqref{SB2} cannot be applied directly.
The key observation is that, for $\PP_S$-a.e.\ fixed realization of $S$, the expansion
\eqref{eq:Yk-chaos} is a finite Poisson-chaos expansion in $L^2(\PP_\xi)$. Hence the usual Poisson
Malliavin--Stein inequality applies conditionally on $S$. The only additional step is then to
average these conditional bounds over the sampling walk in order to recover an annealed estimate.
Throughout this subsection, the Malliavin operators act only in the Poisson direction, with $S$
regarded as fixed.

\begin{lemma}[Annealed-by-conditional distances bound]\label{lem:annealed-by-conditional-Yk}
For any $N\ge1$,
\[
  d_\mathcal H\!\big(\mathcal L(F_{N,\varphi}),\mathcal L(Z)\big)
  \;\le\;
  \EE_S\Big[d_\mathcal H\!\big(\mathcal L_\xi(F_{N,\varphi}\mid S),\mathcal L(Z)\big)\Big],
\]
where $Z\sim\mathcal N(0,1)$ is independent of $(\eta,S)$.
\end{lemma}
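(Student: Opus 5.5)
The plan is to condition on $S$ and apply the triangle inequality under the integral, using that $Z$ is independent of $(\eta,S)$. We fix $h\in\mathcal H$, which is a class of bounded-Lipschitz or indicator test functions, so that $h(F_{N,\varphi})$ and $h(Z)$ are integrable. For $d_W$ this holds because $F_{N,\varphi}\in L^2$ and $h$ has linear growth. Since $\eta$ and $S$ are independent under $\PP=\PP_\xi\otimes\PP_S$, Fubini's theorem (or the disintegration of the product measure) gives
\[
\EE[h(F_{N,\varphi})]=\EE_S\big[\EE_\xi[h(F_{N,\varphi})\mid S]\big].
\]
Because $Z$ is independent of $S$, we also have $\EE[h(Z)]=\EE_S[\EE[h(Z)]]$. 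Here $\EE_\xi[h(F_{N,\varphi})\mid S]$ is the integral of $h$ against the regular conditional law $\mathcal L_\xi(F_{N,\varphi}\mid S)$. This law exists and depends measurably on $S$, since $F_{N,\varphi}$ is a measurable function of $(\eta,S)$ on a product of standard Borel spaces.

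Subtracting the two identities, we get
\[
\big|\EE h(F_{N,\varphi})-\EE h(Z)\big|
\le \EE_S\Big[\big|\EE_\xi[h(F_{N,\varphi})\mid S]-\EE h(Z)\big|\Big]
\le \EE_S\Big[d_\mathcal H\big(\mathcal L_\xi(F_{N,\varphi}\mid S),\mathcal L(Z)\big)\Big].
\]
The second inequality bounds the integrand pointwise in $S$ by the supremum over $\mathcal H$. Taking the supremum over $h\in\mathcal H$ on the left-hand side then yields the claim. The normalization $\sigma_{N,\varphi}$ is the annealed standard deviation, a deterministic constant, so conditioning does not affect it.

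The only delicate point is measurability of $S\mapsto d_\mathcal H(\mathcal L_\xi(F_{N,\varphi}\mid S),\mathcal L(Z))$, since it is a supremum over an uncountable family. There are two ways around this. The first is to interpret the right-hand side as an outer expectation. The cleaner route, which I would take, is to observe that $S$ ranges over the finitely many nearest-neighbour paths $(S_1,\dots,S_N)$. The conditional law is therefore a function of a discrete variable, and the expectation is a finite sum, so no measurability issue arises.
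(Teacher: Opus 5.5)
Your proof is correct and follows the paper's argument: write $\EE h(F_{N,\varphi})-\EE h(Z)$ as an $\EE_S$-average of conditional differences, move the absolute value inside, bound the integrand pointwise by the conditional distance, and take the supremum over $h\in\mathcal H$. Your extra remarks are sound refinements that the paper leaves implicit: integrability of $h(F_{N,\varphi})$, the deterministic annealed normalization, and measurability via the finitely many values of $(S_1,\dots,S_N)$.
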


\begin{proof}
For any $h\in\mathcal H$, we have
\begin{align}
  \Big|\EE[h(F_{N,\varphi})]-\EE[h(Z)]\Big|
  = & \Big|\EE_S\big[\EE_\xi[h(F_{N,\varphi})\mid S]-\EE[h(Z)]\big]\Big|\\
  \le& \EE_S\Big[\big|\EE_\xi[h(F_{N,\varphi})\mid S]-\EE[h(Z)]\big|\Big]\\
  \le& \EE_S\Big[d_\mathcal H\!\big(\mathcal L_\xi(F_{N,\varphi}\mid S),\mathcal L(Z)\big)\Big].
\end{align}
Taking the supremum over all $h\in\mathcal H$ yields the claim.
\qed\end{proof}

For $\PP_S$-a.e.\ fixed realization of $S$, the expansion \eqref{eq:Yk-chaos} is a finite
Poisson-chaos expansion in $L^2(\PP_\xi)$. Hence, applying \eqref{De} and \eqref{L_inverse}
pathwise, we obtain for $\mu$-a.e.\ $w\in\WW$,
\begin{equation}\label{eq:DF-Yk}
  D_wF_{N,\varphi}
  =\frac{1}{\sigma_{N,\varphi}}\sum_{q=1}^k c_{\varphi,q}\,q\,I_{q-1}\big(f_{q,S}(w,\cdot)\big),
\end{equation}
and
\begin{equation}\label{eq:DLinv-Yk}
  -D_wL^{-1}F_{N,\varphi}
  =\frac{1}{\sigma_{N,\varphi}}\sum_{q=1}^k c_{\varphi,q}\,I_{q-1}\big(f_{q,S}(w,\cdot)\big).
\end{equation}
Applying the conditional Poisson Malliavin--Stein bound and then averaging over $S$ via
Lemma~\ref{lem:annealed-by-conditional-Yk} yields
\begin{equation}\label{eq:MS-Yk}
  d_W(F_{N,\varphi},Z)
  \le
  \sqrt{\frac2\pi}\,
  \EE\Big[\Big|1-\langle DF_{N,\varphi},-DL^{-1}F_{N,\varphi}\rangle_{L^2(\mu)}\Big|\Big]
  +
  \EE\int_{\WW} |D_wF_{N,\varphi}|^2\,|D_wL^{-1}F_{N,\varphi}|\,\mu(dw).
\end{equation}

Set
\begin{equation}\label{eq:Theta_{N,k}}
  \Theta_{N,\varphi}
  :=\langle DF_{N,\varphi},-DL^{-1}F_{N,\varphi}\rangle_{L^2(\mu)},
\end{equation}
and define
\[
 \mathrm T_1^{(\varphi)}:=\EE\big[|1-\Theta_{N,\varphi}|\big],
  \qquad
  \mathrm T_2^{(\varphi)}:=\EE\int_{\WW}|D_wF_{N,\varphi}|^2\,|D_wL^{-1}F_{N,\varphi}|\,\mu(dw).
\]
Then \eqref{eq:MS-Yk} becomes
\begin{equation}\label{eq:MS-case1}
  d_W(F_{N,\varphi},Z)
  \le
  \sqrt{\frac2\pi}\,\mathrm T_1^{(\varphi)}+\mathrm T_2^{(\varphi)}.
\end{equation}
In the remainder of this subsection, we first consider the basic observable $ \varphi(x)=x$,
which already captures the main route of the proof in a particularly transparent form. In this case
only the first Poisson chaos appears, whereas the general polynomial case follows the same strategy
with a more involved finite-chaos expansion.
%==============================================================================
\subsubsection{The case $\varphi(x)=x$}\label{subsubsec:Yk-case1}
%==============================================================================

In this subsection, we specialize the general annealed normal approximation argument to the linear functional
\[
  Y_{N,x}=\sum_{n=1}^N \xi(n,S_n).
\]
For $\varphi(x)=x$, we have $c_{x,1}=1$ and $f_{1,S}(w)=\sum_{n=1}^N \mathbf 1_{\mathcal{A}_n}(w)$.
To apply the annealed Malliavin--Stein bound, we first identify the correct variance scale, then write down the
corresponding Malliavin operators, and finally estimate the two terms appearing in the bound.

We start with the variance normalization. Lemma~\ref{lem:Yk-Var} yields
\begin{equation}\label{eq:sigmaN1-asymp}
  \sigma_{N,x}^2 = \mathfrak c_1 N + O(1),\qquad
  \mathfrak c_1=\lambda\Big(1+2\sum_{t\ge1}a_t^{(1)}\Big)\in(0,\infty),
  \qquad \sigma_{N,x}^2\asymp N.
\end{equation}
This shows that $\sigma_{N,x}^2$ grows linearly in $N$, which will be repeatedly used when converting moment bounds into
the final $N^{-1/2}$ rate.

Next we record the first-order chaos representation and the associated Malliavin operators. Since for $\varphi(x)=x$ we have
$Y_{N,x}-\EE[Y_{N,x}]=I_1(f_{1,S})$ by \eqref{eq:Yk-chaos}, it follows that
\[
  F_{N,x}=\frac{I_1(f_{1,S})}{\sigma_{N,x}}.
\]
Consequently, for $\mu$-a.e.\ $w\in\WW$, using $D_w I_1(g)=g(w)$ and $L^{-1}I_1(g)=-I_1(g)$,
\begin{equation}\label{eq:DF-case1}
  D_wF_{N,x}=\frac{1}{\sigma_{N,x}}\,f_{1,S}(w),
\end{equation}
and
\begin{equation}\label{eq:DLinv-case1}
  -D_wL^{-1}F_{N,x}=D_wF_{N,x}.
\end{equation}

From \eqref{eq:Theta_{N,k}}, we have
\[
  \Theta_{N,x}=\langle DF_{N,x},-DL^{-1}F_{N,x}\rangle_{L^2(\mu)}
          =\int_{\WW}|D_wF_{N,x}|^2\,\mu(dw).
\]
By \eqref{eq:DF-case1} and \eqref{F_q,s}, yields
\begin{equation}\label{eq:ThetaS-case1}
  \Theta_{N,x}
  =\frac{1}{\sigma_{N,x}^2}\sum_{n,m=1}^N\mu(\mathcal{A}_n\cap \mathcal{A}_m)
  =\frac{1}{\sigma_{N,x}^2}\Big(\lambda N+2\lambda\Sigma^{(1)}_S\Big).
\end{equation}
Note that $\EE[\Theta_{N,x}]=1$ since $\EE[\lambda N+2\lambda\Sigma^{(1)}_S]=\sigma_{N,x}^2$ by \eqref{eq:sigmak-expansion}.

We begin with bounding $\mathrm T_1^x$. To control $\mathrm T_1^x$, we need a variance bound for the correlation sum $\Sigma^{(1)}_S$.
The following estimate will also be used later in the general polynomial case.

\begin{lemma}\label{lem:VarSigma-casek}
Fix $q\in\{1,\dots,k\}$. Under the drift assumption \eqref{eq:drift-assumption-Yk}, there exists $C:=C(q)<\infty$ such that for all $N\ge1$,
\[
  \operatorname{Var}_S(\Sigma^{(q)}_S)\le C\,N,
\] where $\Sigma^{(q)}_S=\sum_{1\le n<m\le N}Q_{m-n}(S_m-S_n)^q$.
\end{lemma}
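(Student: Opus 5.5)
We need $\operatorname{Var}_S(\Sigma^{(q)}_S)\le CN$. Write $\Sigma^{(q)}_S=\sum_{1\le n<m\le N} X_{n,m}$ with $X_{n,m}:=Q_{m-n}(S_m-S_n)^q$. The quantity $X_{n,m}$ depends only on the increments $\Delta_{n+1},\dots,\Delta_m$, i.e.\ on the time block $(n,m]$, and $\EE_S[X_{n,m}]=a^{(q)}_{m-n}$. We expand
\[
\operatorname{Var}_S(\Sigma^{(q)}_S)=\sum_{n<m}\sum_{n'<m'}\operatorname{Cov}_S\big(X_{n,m},X_{n',m'}\big).
\]
Independence of increments gives $\operatorname{Cov}_S(X_{n,m},X_{n',m'})=0$ whenever the blocks $(n,m]$ and $(n',m']$ are disjoint. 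So only overlapping pairs of blocks contribute.

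For overlapping pairs we bound the covariance crudely by Cauchy--Schwarz:
\[
|\operatorname{Cov}_S(X_{n,m},X_{n',m'})|\le \EE_S[X_{n,m}^2]^{1/2}\,\EE_S[X_{n',m'}^2]^{1/2}=\big(a^{(2q)}_{m-n}\big)^{1/2}\big(a^{(2q)}_{m'-n'}\big)^{1/2}.
\]
Lemma~\ref{lem:moments-decay-Yk} with $m=2q$ gives $b_t:=(a^{(2q)}_t)^{1/2}\le C t^{-q/2}e^{-ct/2}$. In particular $\sum_t t\,b_t<\infty$.

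Next we count overlapping pairs. Fix the first block by its start $n$ and length $t=m-n$, and the second by its length $s=m'-n'$. For the blocks to overlap, the start $n'$ must lie in a window of at most $t+s$ positions. There are at most $N$ choices of $n$. Hence
\[
\operatorname{Var}_S(\Sigma^{(q)}_S)\le \sum_{t,s\ge1} N\,(t+s)\,b_t\,b_s\le 2N\Big(\sum_{t\ge1}t\,b_t\Big)\Big(\sum_{s\ge1}b_s\Big)\le C(q)\,N,
\]
in the spirit of the counting in Lemma~\ref{lem:conv-count-Yk}.

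The one point to check carefully is that the decay in Lemma~\ref{lem:moments-decay-Yk} survives both the square root and the extra factor $(t+s)$ from the counting. It does, because the decay is exponential. This is exactly where the drift enters. In the symmetric case one would only have $b_t\sim t^{-q/2}$, and the sum $\sum_t t\,b_t$ would diverge for small $q$.

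The other thing to verify is that covariances really vanish for disjoint blocks. Blocks meeting only at an endpoint, e.g.\ $m=n'$, involve disjoint sets of increments, namely $\{\Delta_{n+1},\dots,\Delta_m\}$ and $\{\Delta_{m+1},\dots,\Delta_{m'}\}$. So they are indeed independent and contribute nothing, and no extra care is needed there.
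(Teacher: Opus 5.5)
Your proof is correct and follows essentially the same route as the paper. Covariances of non-overlapping increment blocks vanish, overlapping pairs are bounded by Cauchy--Schwarz via $\EE_S[Q_t(S_t)^{2q}]$, for fixed lengths $t,s$ there are at most $N(t+s)$ overlapping pairs, and Lemma~\ref{lem:moments-decay-Yk} with $m=2q$ makes the resulting double sum finite. The only cosmetic difference is that the paper first groups the terms by lag into $U_t=\sum_n X^{(q)}_{n,t}$ and bounds $\operatorname{Cov}(U_t,U_s)$, whereas you sum the pairwise covariances directly.
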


\begin{proof}
Set $X^{(q)}_{n,t}:=Q_t(S_{n+t}-S_n)^q$ and $U_t:=\sum_{n=1}^{N-t}X^{(q)}_{n,t}$, so that
$\Sigma^{(q)}_S=\sum_{t=1}^{N-1}U_t$.
Let $\omega=(\Delta_i)_{i\ge1}$ be the increment sequence. $X^{(q)}_{n,t}$ is measurable with respect to the sigma-field generated by the increments indexed by $I(n,t):=\{n+1,\dots,n+t\}$, i.e.
\[
X^{(q)}_{n,t} \in \sigma(\Delta_i : i \in I(n,t)).
\]Hence, if $I(n,t)\cap I(m,s)=\emptyset$, then
$X^{(q)}_{n,t}$ and $X^{(q)}_{m,s}$ are independent, and $\operatorname{Cov}(X^{(q)}_{n,t},X^{(q)}_{m,s})=0$.

Fix $t,s\ge1$. If $I(n,t)\cap I(m,s)\neq\emptyset$, then necessarily
$m\in\{n-s+1,\dots,n+t-1\}$, so for each $n$ there are at most $t+s$ indices $m$ producing an
overlap. Consequently,
\begin{equation}\label{eq:overlap-count-Yk}
  \#\{(n,m): 1\le n\le N-t,\ 1\le m\le N-s,\ I(n,t)\cap I(m,s)\neq\emptyset\}
  \le N(t+s).
\end{equation}

Let $b_t^{(q)}:=\EE_S[X^{(q)}_{1,t}{}^2]=\EE_S[Q_t(S_t)^{2q}]$. By Cauchy--Schwarz,
\[
  |\operatorname{Cov}(X^{(q)}_{n,t},X^{(q)}_{m,s})|
  \le \sqrt{\EE[X^{(q)}_{n,t}{}^2]\EE[X^{(q)}_{m,s}{}^2]}
  =\sqrt{b_t^{(q)}b_s^{(q)}}.
\]
Therefore, using \eqref{eq:overlap-count-Yk},
\[
  |\operatorname{Cov}(U_t,U_s)|
  \le \sum_{\substack{1\le n\le N-t\\1\le m\le N-s\\ I(n,t)\cap I(m,s)\neq\emptyset}}
       |\operatorname{Cov}(X^{(q)}_{n,t},X^{(q)}_{m,s})|
  \le N(t+s)\sqrt{b_t^{(q)}b_s^{(q)}}.
\]
Finally,
\[
  \operatorname{Var}_S(\Sigma^{(q)}_S)
  =\operatorname{Var}_S\Big(\sum_{t=1}^{N-1}U_t\Big)
  \le \sum_{t,s=1}^{N-1}|\operatorname{Cov}(U_t,U_s)|
  \le N\sum_{t,s\ge1}(t+s)\sqrt{b_t^{(q)}b_s^{(q)}}.
\]
By Lemma~\ref{lem:moments-decay-Yk} with $m=2q$, we have $b_t^{(q)}\le C t^{-q}e^{-ct}$, hence
$\sqrt{b_t^{(q)}}\le C t^{-q/2}e^{-ct/2}$ and both $\sum_t\sqrt{b_t^{(q)}}$ and
$\sum_t t\sqrt{b_t^{(q)}}$ converge. Therefore the double sum converge, which yields
$\operatorname{Var}_S(\Sigma^{(q)}_S)\le C N$.
\qed\end{proof}

With Lemma~\ref{lem:VarSigma-casek} in hand, we return to $\Theta_{N,x}$. Using \eqref{eq:ThetaS-case1} and Cauchy--Schwarz's inequality,
\begin{equation}\label{eq:term1-case1}
  \EE\big[|1-\Theta_{N,x}|\big]
  \le \sqrt{\operatorname{Var}(\Theta_{N,x})}
  =\frac{2\lambda}{\sigma_{N,x}^2}\sqrt{\operatorname{Var}_S(\Sigma^{(1)}_S)}
  \le \frac{C}{\sigma_{N,x}^2}\sqrt{N}
  \le C\,N^{-1/2},
\end{equation}
where the last step uses $\sigma_{N,x}^2\asymp N$ from \eqref{eq:sigmaN1-asymp}.

We now turn to $\mathrm T_2^x$. By \eqref{eq:DF-case1} and Tonelli,
\begin{align}
  \int_{\WW}\EE\big[|D_wF_{N,x}|^3\big]\,\mu(dw)
  &=\frac{1}{\sigma_{N,x}^3}\int_{\WW}\EE\Big[\Big(\sum_{n=1}^N\mathbf 1_{\mathcal{A}_n}(w)\Big)^3\Big]\mu(dw)\notag\\
  &=\frac{1}{\sigma_{N,x}^3}\sum_{i,j,\ell=1}^N \EE\,\mu(\mathcal{A}_i\cap \mathcal{A}_j\cap \mathcal{A}_\ell). \label{eq:term2-case1-start}
\end{align}
We estimate the triple intersection sum by splitting according to the coincidence pattern of $(i,j,\ell)$.
\begin{enumerate}[label=(\roman*)]
   \item\textbf{All equal:} This contributes $\frac{N\lambda}{\sigma_{N,x}^3}$ since $\mu(\mathcal{A}_n)=\lambda$.

\item\textbf{Exactly two equal:} Up to a fixed permutation factor absorbed into the constant,
\[
  \frac{C}{\sigma_{N,x}^3}\sum_{1\le n<m\le N}\EE\,\mu(\mathcal{A}_n\cap \mathcal{A}_m)
  =\frac{C\lambda}{\sigma_{N,x}^3}\EE\Big[\sum_{1\le n<m\le N}Q_{m-n}(S_m-S_n)\Big]
  =\frac{C\lambda}{\sigma_{N,x}^3}\EE[\Sigma^{(1)}_S].
\]
This is $O(N/\sigma_{N,x}^3)$, since $\EE[\Sigma^{(1)}_S]=\sum_{t=1}^{N-1}(N-t)a_t^{(1)}=O(N)$ by the summability of $a_t^{(1)}$.

\item\textbf{All distinct indices:} For $n<m<\ell$,
\[
  \mu(\mathcal{A}_n\cap \mathcal{A}_m\cap \mathcal{A}_\ell)
  =\lambda\,Q_{m-n}(S_m-S_n)\,Q_{\ell-m}(S_\ell-S_m).
\]
And by independence of disjoint increment blocks of $S$, it follows
\[
  \EE\big[Q_{m-n}(S_m-S_n)\,Q_{\ell-m}(S_\ell-S_m)\big]
  =a_{m-n}^{(1)}\,a_{\ell-m}^{(1)}.
\]
Therefore,
\[
  \sum_{1\le n<m<\ell\le N}\EE\,\mu(\mathcal{A}_n\cap \mathcal{A}_m\cap \mathcal{A}_\ell)
  =\lambda\sum_{t_1,t_2\ge1}(N-t_1-t_2)_+\,a_{t_1}^{(1)}a_{t_2}^{(1)}
  \le \lambda N\Big(\sum_{t\ge1}a_t^{(1)}\Big)^2
  =O(N),
\]
where we used Lemma~\ref{lem:conv-count-Yk} with $M=2$.
\end{enumerate}

Substituting (i)--(iii) into \eqref{eq:term2-case1-start} and noting $\sigma_{N,x}^2\asymp N$, we have
\begin{equation}\label{eq:term2-case1}
  \int_{\WW}\EE\big[|D_wF_{N,x}|^3\big]\,\mu(dw)
  \le C\,\frac{N}{\sigma_{N,x}^3}
  \le C\,N^{-1/2}.
\end{equation}

Finally, substituting \eqref{eq:term1-case1} and \eqref{eq:term2-case1}
into \eqref{eq:MS-case1}, we obtain
\[
  d_W(F_{N,x},Z)\le C\,N^{-1/2}.
\]
This proves an annealed CLT with Wasserstein rate $N^{-1/2}$ for $Y_{N,x}$ under drifted sampling.

%==============================================================================
%==============================================================================
\subsubsection{The case $k\ge2$}\label{subsubsec:Yk-case2}

The proof for a general polynomial observable follows the same overall scheme, but the finite chaos
expansion now involves several orders simultaneously, which leads to additional contraction terms.

We estimate $\mathrm T_1^{(\varphi)}$ and $\mathrm T_2^{(\varphi)}$ separately. The first term is controlled via a
chaos decomposition of $\Theta_{N,\varphi}$ and a total-variance argument, whereas the second is reduced to
third moments of lower-order multiple integrals and then bounded through contraction estimates.

\noindent\textbf{Estimate of $\mathrm T_1^{(\varphi)}$}

Conditionally on \(S\), the random variable \(F_{N,\varphi}\) is a centered Poisson functional of
\(\eta\). Hence, by the Poisson integration-by-parts identity \eqref{eq:integration-by-parts},
\[
  \EE_\xi[\Theta_{N,\varphi}\mid S]
  =\EE_\xi[F_{N,\varphi}^2\mid S]
  =\operatorname{Var}_\xi(F_{N,\varphi}\mid S)
  =\frac{V_S}{\sigma_{N,\varphi}^2},
\]
where
\[
  V_S:=\operatorname{Var}_\xi(Y_{N,\varphi}\mid S).
\]
Taking expectation with respect to $S$ and using
$\EE_S[V_S]=\operatorname{Var}(Y_{N,\varphi})=\sigma_{N,\varphi}^2$, we obtain
\begin{equation}\label{eq:ETheta=1-Yk}
  \EE[\Theta_{N,\varphi}]=1.
\end{equation}
Hence by Cauchy--Schwarz
\begin{equation}\label{eq:term1-cs-Yk}
  \EE\big[|1-\Theta_{N,\varphi}|\big]\le \sqrt{\operatorname{Var}(\Theta_{N,\varphi})}.
\end{equation}
Thus it suffices to show $\operatorname{Var}(\Theta_{N,\varphi})=O(N^{-1})$.

To access $\operatorname{Var}(\Theta_{N,\varphi})$, we first decompose $\Theta_{N,\varphi}$ into Poisson chaos conditionally on $S$.
Substituting \eqref{eq:DF-Yk}--\eqref{eq:DLinv-Yk} into \eqref{eq:Theta_{N,k}} gives
\begin{equation}\label{eq:Theta-expand-Yk}
  \Theta_{N,\varphi}
  =\frac{1}{\sigma_{N,\varphi}^2}\sum_{q_1,q_2=1}^k c_{\varphi,q_1}c_{\varphi,q_2}\,q_1
     \int_{\WW} I_{q_1-1}\!\big(f_{q_1,S}(w,\cdot)\big)\,
               I_{q_2-1}\!\big(f_{q_2,S}(w,\cdot)\big)\,\mu(dw).
\end{equation}
For each fixed $w$, we apply the product formula \eqref{mutiIn} to the product of the two multiple
integrals of orders $q_1-1$ and $q_2-1$. This produces a finite sum of multiple integrals whose
kernels are contractions of $f_{q_1,S}(w,\cdot)$ and $f_{q_2,S}(w,\cdot)$. Accordingly, for
$r\in\{0,\dots,\min(q_1-1,q_2-1)\}$ and $\ell\in\{0,\dots,r\}$ we define
\begin{equation}\label{eq:Hrsl-def-Yk}
  H_{q_1,q_2,r,\ell,S}
  :=
  \int_{\WW}\Big(
    f_{q_1,S}(w,\cdot)\,\widetilde{\star}_r^\ell\,f_{q_2,S}(w,\cdot)
  \Big)\,\mu(dw).
\end{equation}
Let
\[
  \alpha_{q_1,q_2,r,\ell}
  \;:=\;
  c_{\varphi,q_1}\,c_{\varphi,q_2}\,q_1\;
  r!\binom{q_1-1}{r}\binom{q_2-1}{r}\binom{r}{\ell},
\]
and for each $ \mathfrak{m}\in\{0,1,\dots,2k-2\}$ introduce the index set
\[
  \mathcal I_\mathfrak{m}
  \;:=\;
  \Big\{(q_1,q_2,r,\ell):\ 
  1\le q_1,q_2\le k,\ 
  0\le r\le \min(q_1-1,q_2-1),\
  0\le \ell\le r,\
  q_1+q_2-2-r-\ell=\mathfrak{m}
  \Big\}.
\]
Because each $f_{q,S}(w,\cdot)$ is a finite sum of indicator tensors, the product formula produces a
finite linear combination of multiple integrals. Thus all relevant kernels belong pathwise to the
appropriate $L^2$ spaces, and exchanging the finite sums, the $\mu(dw)$-integration, and the multiple
Wiener--It\^o integrals is immediate, and we
obtain the finite chaos decomposition
\begin{equation}\label{eq:Theta-chaos-Yk}
  \Theta_{N,\varphi}
  =
  \frac{V_S}{\sigma_{N,\varphi}^2}
  +\frac{1}{\sigma_{N,\varphi}^2}\sum_{\mathfrak{m}=1}^{2k-2} I_\mathfrak{m}(h_{\mathfrak{m},S}),
\end{equation}
where the zero-th chaos term corresponds precisely to the tuples
$(q_1,q_2,r,\ell)=(q,q,q-1,q-1)$, for which
\[
H_{q,q,q-1,q-1,S}=\|f_{q,S}\|_{L^2(\mu^{\otimes q})}^2,
\]
hence the zero-th chaos contribution equals
\[
\sum_{q=1}^k c_{\varphi,q}^2 q!\,\|f_{q,S}\|^2
=V_S,
\] for each $\mathfrak{m}=1,\dots,2k-2$, the kernel
$h_{\mathfrak{m},S}\in L^2(\mu^{\otimes \mathfrak{m}})$ is given explicitly by
\[
  h_{\mathfrak{m},S}
  \;:=\;
  \sum_{(q_1,q_2,r,\ell)\in\mathcal I_\mathfrak{m}}
  \alpha_{q_1,q_2,r,\ell}\,
  H_{q_1,q_2,r,\ell,S}.
\]
In particular, since the number of terms depends only on $\varphi$, there exists $C:=C(\varphi)<\infty$ such that for
every realization of $S$,
\begin{equation}\label{eq:VarTheta-cond-bound-Yk}
  \operatorname{Var}_\xi(\Theta_{N,\varphi}\mid S)
\le
\frac{C}{\sigma_{N,\varphi}^4}
\sum_{\mathfrak{m}=1}^{2k-2}\;
\sum_{(q_1,q_2,r,\ell)\in\mathcal I_\mathfrak{m}}
\|H_{q_1,q_2,r,\ell,S}\|^2.
\end{equation}
Taking $\EE_\xi[\cdot\mid S]$ in \eqref{eq:Theta-chaos-Yk} gives the conditional mean identity
\begin{equation}\label{eq:ETheta-cond-Yk}
  \EE_\xi[\Theta_{N,\varphi}\mid S]=\frac{V_S}{\sigma_{N,\varphi}^2}.
\end{equation}
Furthermore, we decompose
\begin{equation}\label{eq:VarTheta-decomp-Y2-in-Yk}
  \operatorname{Var}(\Theta_{N,\varphi})
  =\EE\big[\operatorname{Var}_\xi(\Theta_{N,\varphi}\mid S)\big] + \operatorname{Var}\!\Big(\frac{V_S}{\sigma_{N,\varphi}^2}\Big).
\end{equation}

To control $\operatorname{Var}(\Theta_{N,\varphi})$ via total variance, we also need a linear bound on the fluctuations of
$V_S$ under $\PP_S$. 

\begin{lemma}[Linear variance bound for $V_S$]\label{lem:VarVS-Yk}
Under \eqref{eq:drift-assumption-Yk}, there exists $C:=C(\varphi)<\infty$ such that for all $N\ge1$,
\[
  \operatorname{Var}_S(V_S)\le C\,N.
\]
\end{lemma}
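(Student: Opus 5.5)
The plan is to reduce the statement to Lemma~\ref{lem:VarSigma-casek}. By \eqref{eq:VarYk-condS} and \eqref{F_q,s}, for every realization of $S$ we have the explicit representation
\[
  V_S=\sum_{q=1}^k c_{\varphi,q}^2\,q!\,\lambda^q\bigl(N+2\Sigma^{(q)}_S\bigr).
\]
The term $N$ is deterministic, and the coefficients $c_{\varphi,q}^2 q!\lambda^q$ depend only on $\varphi$ and $\lambda$. Hence $V_S-\EE_S[V_S]$ is a fixed finite linear combination of the centered correlation sums $\Sigma^{(q)}_S-\EE_S[\Sigma^{(q)}_S]$, $q=1,\dots,k$.

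Next I would bound the variance of this finite sum using the elementary inequality $\operatorname{Var}(\sum_{q=1}^k X_q)\le k\sum_{q=1}^k\operatorname{Var}(X_q)$, which is Cauchy--Schwarz applied to the finite sum. This gives
\[
  \operatorname{Var}_S(V_S)\le 4k\sum_{q=1}^k \bigl(c_{\varphi,q}^2\,q!\,\lambda^q\bigr)^2\operatorname{Var}_S\bigl(\Sigma^{(q)}_S\bigr).
\]
Lemma~\ref{lem:VarSigma-casek} applies to each $q\in\{1,\dots,k\}$ and gives $\operatorname{Var}_S(\Sigma^{(q)}_S)\le C(q)N$. Combining, we obtain $\operatorname{Var}_S(V_S)\le C(\varphi)N$ with $C(\varphi)=4k\sum_q (c_{\varphi,q}^2 q!\lambda^q)^2 C(q)$, which is the claim.

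No step here is a real obstacle, because the substantive input was already established in Lemma~\ref{lem:VarSigma-casek}. That input consists of the block-independence of $S$-increments, the overlap counting bound $N(t+s)$, and the summability of $\sqrt{b_t^{(q)}}$ and $t\sqrt{b_t^{(q)}}$ from Lemma~\ref{lem:moments-decay-Yk} with $m=2q$. The only point needing care is to check that the identity for $V_S$ holds for every path $S$, not just on average, so that the variance computation under $\PP_S$ is legitimate. This holds because \eqref{F_q,s} is a pathwise computation of $\|f_{q,S}\|^2$ via $\mu(\mathcal A_n\cap\mathcal A_m)=\lambda Q_{|m-n|}(S_m-S_n)$.
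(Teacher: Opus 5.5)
Your proposal is correct and follows the paper's proof. Both write $V_S$ pathwise as an affine combination of the correlation sums $\Sigma^{(q)}_S$ via \eqref{eq:VarYk-condS} and \eqref{F_q,s}, bound $\operatorname{Var}_S(V_S)$ by a constant times $\sum_{q=1}^k\operatorname{Var}_S(\Sigma^{(q)}_S)$, and conclude with Lemma~\ref{lem:VarSigma-casek}; your constant $4k\sum_q(c_{\varphi,q}^2\,q!\,\lambda^q)^2C(q)$ simply makes the paper's $C$ explicit.
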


\begin{proof}
Recall that
\begin{equation}\label{eq:VS-explicit-Yk}
  V_S
  =\sum_{q=1}^k c_{\varphi,q}^2\,q!\,\lambda^q\big(N+2\Sigma^{(q)}_S\big),
  \qquad
  \Sigma^{(q)}_S=\sum_{1\le n<m\le N}Q_{m-n}(S_m-S_n)^q.
\end{equation} 
Therefore,
\[
  \operatorname{Var}_S(V_S)\le C\sum_{q=1}^k \operatorname{Var}_S(\Sigma^{(q)}_S)\le C N
\]
by Lemma~\ref{lem:VarSigma-casek}.\qed
\end{proof}

We now provide the $L^2$-estimate for the contraction kernels introduced in \eqref{eq:Hrsl-def-Yk},
which is the key analytic input for bounding $\operatorname{Var}_\xi(\Theta_{N,\varphi}\mid S)$.
\begin{lemma}\label{lem:Hrs-L2-Yk}
Under index set $\mathcal I_\mathfrak{m}$, there exists $C:=C(\varphi)<\infty$ such that for all $N\ge1$,
\[
  \EE_S\Big[\big\|H_{q_1,q_2,r,\ell,S}\big\|_{L^2(\mu^{\otimes \mathfrak{m}})}^2\Big]
  \le C\,N.
\]
\end{lemma}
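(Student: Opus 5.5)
The plan is to compute $H_{q_1,q_2,r,\ell,S}$ explicitly as a finite sum of indicator tensors. I will then expand $\|H\|^2$ as a fourfold sum over time indices. Each term of that sum is a product of intersection measures $\mu(\mathcal A_a\cap\mathcal A_b)=\lambda Q_{|a-b|}(S_b-S_a)$, and I will bound the expectation of such a product by a weight decaying exponentially along a spanning tree on the four indices. Summing such weights gives $O(N)$.

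\emph{Step 1 (explicit kernel).} By \eqref{eq:FqS-def}, $f_{q,S}(w,\cdot)=\sum_n \mathbf 1_{\mathcal A_n}(w)\,\mathbf 1_{\mathcal A_n}^{\otimes(q-1)}$. Identify $r$ variables and integrate $\ell$ of them. Then integrate over $w$, which gives one additional factor $\mu(\mathcal A_n\cap\mathcal A_m)$. This yields, before symmetrization,
\[
H_{q_1,q_2,r,\ell,S}=\sum_{n,m=1}^N \mu(\mathcal A_n\cap\mathcal A_m)^{\ell+1}\;
\mathbf 1_{\mathcal A_n\cap\mathcal A_m}^{\otimes(r-\ell)}\otimes\mathbf 1_{\mathcal A_n}^{\otimes(q_1-1-r)}\otimes\mathbf 1_{\mathcal A_m}^{\otimes(q_2-1-r)}=:\sum_{n,m}\mu(\mathcal A_n\cap\mathcal A_m)^{\ell+1} g_{n,m}.
\]
Symmetrization does not increase the $L^2$ norm, so I may work with the unsymmetrized kernel.

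\emph{Step 2 (norm expansion).} Expanding the square,
\[
\|H\|^2\le\sum_{n,m,n',m'}\mu(\mathcal A_n\cap\mathcal A_m)^{\ell+1}\mu(\mathcal A_{n'}\cap\mathcal A_{m'})^{\ell+1}\,\langle g_{n,m},g_{n',m'}\rangle,
\]
and $\langle g_{n,m},g_{n',m'}\rangle$ is a product of exactly $\mathfrak m$ factors. Each factor has the form $\mu(\mathcal A_a\cap\mathcal A_b\cap\cdots)$ with at least one index from $\{n,m\}$ and one from $\{n',m'\}$. Since $\mathfrak m\ge1$, there is at least one such cross factor. I bound every factor by $\lambda$, except the following three: one copy of $\mu(\mathcal A_n\cap\mathcal A_m)$, one copy of $\mu(\mathcal A_{n'}\cap\mathcal A_{m'})$, and one cross factor, which I bound by $\lambda Q_{|a-b|}(S_b-S_a)$ for a suitable pair $a\in\{n,m\}$, $b\in\{n',m'\}$. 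This leaves a product of three heat-kernel values indexed by the edges of a spanning tree $T$ on the four (possibly coinciding) indices.

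\emph{Step 3 (expectation over $S$).} The three increments $S_b-S_a$ along the edges of $T$ are generally not independent, because their time intervals may overlap. I will avoid that issue by applying Hölder's inequality and then Lemma~\ref{lem:moments-decay-Yk} with $m=3$:
\[
\EE_S\Big[\prod_{e\in T}Q_{t_e}(\cdot)\Big]\le\prod_{e\in T}\big(\EE_S[Q_{t_e}(S_{t_e})^3]\big)^{1/3}\le C\prod_{e\in T}e^{-c t_e}.
\]
Here $t_e$ is the time difference along edge $e$, and I used the stationarity of increments. Edges with $t_e=0$ contribute the constant $1$, which is harmless.

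\emph{Step 4 (summation).} Fix one index, which gives $N$ choices. Each remaining index is attached through a tree edge with summable weight $e^{-ct_e}$, summed over $t_e\in\ZZ$. This gives $\EE_S\|H\|^2\le C N$, where $C$ depends only on $k$ and $\lambda$ through the finitely many index patterns.

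I expect the main obstacle to be the bookkeeping in Step 2. One has to verify, for every tuple in $\mathcal I_{\mathfrak m}$ with $\mathfrak m\ge1$, that the inner product really contains a cross factor. Subcases include $r=\ell$, and $q_1-1-r=0$ while $q_2-1-r>0$, in which the tensor components come from different index pairs. If $\mathfrak m=0$ there would be no cross factor and the bound would degrade to $N^2$; this case is exactly the $V_S$ term, which was treated separately.
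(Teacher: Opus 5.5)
Your proposal is correct. It shares the paper's skeleton: the explicit unsymmetrized kernel, the expansion of $\|\bar H\|^2$ into a quadruple sum of products of intersection masses, H\"older combined with Lemma~\ref{lem:moments-decay-Yk}, and a final summation to $O(N)$. The two arguments differ only in how that last step is organized.

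\textbf{The paper's route.} The paper keeps every factor. For four distinct times it orders them $t_1<t_2<t_3<t_4$ and uses the Markov property to write the fourfold intersection as $\lambda Q_aQ_bQ_c$ in the gaps $a,b,c$. It then applies H\"older with a large exponent $J(k)$, asserts a bound $u^{(1)}_a u^{(2)}_b u^{(3)}_c$, and concludes with Lemma~\ref{lem:conv-count-Yk}. Coincident indices are handled in a separate step.

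\textbf{Your route.} You bound all but three factors by $\lambda$ and keep a spanning tree of kernels. You then need only H\"older with exponent $3$, and summing along the tree treats coincident indices uniformly via $Q_0(0)=1$.

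\textbf{What your version buys.} It makes explicit a point the paper uses silently. The paper's gap bound $u^{(1)}_a u^{(2)}_b u^{(3)}_c$ requires every gap between consecutive times to be crossed by some retained factor. That is exactly the existence of a cross factor, i.e.\ $\mathfrak m\ge1$. For $\mathfrak m=0$ the kernel is the scalar $\|f_{q,S}\|^2$ and its mean square is of order $N^2$. So the lemma must be read for $\mathfrak m\ge1$, as you note. The paper's extra bookkeeping retains more decay than an $O(N)$ bound needs.

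\textbf{Two small points for the write-up.}
\begin{enumerate}
\item Fix the cross pair $(a,b)$ as a function of $(q_1,q_2,r,\ell)$ alone, so that the tree is the same for every term of the sum.
\item When you invoke stationarity on an edge with $b<a$, cite the symmetry $Q_t(-x)=Q_t(x)$.
\end{enumerate}
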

\begin{proof}
Recall that, for fixed $w\in\WW$,
\begin{equation}\label{eq:FqS-w-explicit-rewrite}
  f_{q,S}(w;u_1,\dots,u_{q-1})
  =\sum_{n=1}^N \mathbf 1_{\mathcal{A}_n}(w)\prod_{j=1}^{q-1}\mathbf 1_{\mathcal{A}_n}(u_j),
\end{equation}
which is symmetric in $(u_1,\dots,u_{q-1})$.

We first compute the relevant contractions for indicator tensors. For $n,m\in\{1,\dots,N\}$,
a direct inspection of the definition of $\star_r^\ell$ yields
\begin{equation}\label{eq:indicator-contraction-rewrite}
  \mathbf 1_{\mathcal{A}_n}^{\otimes(q_1-1)}\star_r^\ell \mathbf 1_{\mathcal{A}_m}^{\otimes(q_2-1)}
  =
  \mu(\mathcal{A}_n\cap \mathcal{A}_m)^\ell\,
  \mathbf 1_{\mathcal{A}_n\cap \mathcal{A}_m}^{\otimes(r-\ell)}
  \otimes \mathbf 1_{\mathcal{A}_n}^{\otimes(q_1-1-r)}
  \otimes \mathbf 1_{\mathcal{A}_m}^{\otimes(q_2-1-r)} .
\end{equation}
Inserting \eqref{eq:FqS-w-explicit-rewrite} into the definition of
$H_{q_1,q_2,r,\ell,S}$, using bilinearity of $\star_r^\ell$, and then integrating with respect to
$\mu(dw)$, we obtain the unsymmetrized kernel
\[
\bar H_{q_1,q_2,r,\ell,S}
:=\int_{\WW}\Big(f_{q_1,S}(w,\cdot)\star_r^\ell f_{q_2,S}(w,\cdot)\Big)\,\mu(dw)
\]
in the form
\begin{equation}\label{eq:Hbar-explicit-rewrite}
  \bar H_{q_1,q_2,r,\ell,S}
  =\sum_{n,m=1}^N \mu(\mathcal{A}_n\cap \mathcal{A}_m)^{\ell+1}\,
  \mathbf 1_{\mathcal{A}_n\cap \mathcal{A}_m}^{\otimes(r-\ell)}
  \otimes \mathbf 1_{\mathcal{A}_n}^{\otimes(q_1-1-r)}
  \otimes \mathbf 1_{\mathcal{A}_m}^{\otimes(q_2-1-r)} .
\end{equation}
Since $H_{q_1,q_2,r,\ell,S}$ is the canonical symmetrization of $\bar H_{q_1,q_2,r,\ell,S}$, and
symmetrization is an $L^2$-contraction, it is enough to prove that
\[
  \EE\big[\|\bar H_{q_1,q_2,r,\ell,S}\|_{L^2(\mu^{\otimes \mathfrak{m}})}^2\big]\le C N .
\]

Expanding the square of \eqref{eq:Hbar-explicit-rewrite} and using the product structure of
$\mu^{\otimes \mathfrak{m}}$ gives
\begin{align}
  \|\bar H_{q_1,q_2,r,\ell,S}\|_{L^2(\mu^{\otimes \mathfrak{m}})}^2
  &=\sum_{n,m,n',m'=1}^N \mathcal T(n,m,n',m'), \label{eq:Tnm-def-rewrite}
\end{align}
where
\begin{align*}
  \mathcal T(n,m,n',m')
  :=&\;
  \mu(\mathcal{A}_n\cap \mathcal{A}_m)^{\ell+1}\mu(\mathcal{A}_{n'}\cap \mathcal{A}_{m'})^{\ell+1} \\
  &\times
  \mu(\mathcal{A}_n\cap \mathcal{A}_m\cap \mathcal{A}_{n'}\cap \mathcal{A}_{m'})^{\,r-\ell}\,
  \mu(\mathcal{A}_n\cap \mathcal{A}_{n'})^{\,q_1-1-r}\,
  \mu(\mathcal{A}_m\cap \mathcal{A}_{m'})^{\,q_2-1-r}.
\end{align*}

We now estimate $\EE[\mathcal T(n,m,n',m')]$. The argument is slightly different depending on
whether the four indices $n,m,n',m'$ are all distinct or not.

\medskip
\noindent\emph{Step 1: the case of four distinct times.}
Assume first that the set $\{n,m,n',m'\}$ has cardinality four, and write its increasing
rearrangement as
\[
  t_1<t_2<t_3<t_4.
\]
Set
\[
  a:=t_2-t_1,\qquad b:=t_3-t_2,\qquad c:=t_4-t_3,
\]
so that $a,b,c\ge1$ and $t_1+a+b+c=t_4\le N$.

Moreover, by the Markov property of the particle walk under the intensity measure $\mu$,
\begin{equation}\label{eq:4fold-Markov-rewrite}
  \mu(\mathcal{A}_{t_1}\cap \mathcal{A}_{t_2}\cap \mathcal{A}_{t_3}\cap \mathcal{A}_{t_4})
  =\lambda\,Q_a(S_{t_2}-S_{t_1})\,
           Q_b(S_{t_3}-S_{t_2})\,
           Q_c(S_{t_4}-S_{t_3}).
\end{equation}
Hence every factor appearing in $\mathcal T(n,m,n',m')$ can be written as either
\[
  Q_a(S_{t_2}-S_{t_1}),\quad
  Q_b(S_{t_3}-S_{t_2}),\quad
  Q_c(S_{t_4}-S_{t_3}),\quad
  Q_{a+b}(S_{t_3}-S_{t_1}),\quad
  Q_{b+c}(S_{t_4}-S_{t_2}),\quad
  Q_{a+b+c}(S_{t_4}-S_{t_1}),
\]
possibly raised to integer powers. Since $q_1,q_2\le k$, all exponents in
\eqref{eq:Tnm-def-rewrite} are bounded by a constant depending only on $k$.
Therefore there exist integers
\[
   J:=J(k)<\infty,
\]
depending only on $k$, such that after regrouping equal factors and expanding powers, we may write
\[
  \mathcal T(n,m,n',m')
  \le C \prod_{j=1}^{J'} G_j,
  \qquad J'\le J,
\]
where each $G_j$ is one of the six nonnegative random variables displayed above. If $J'<J$, we pad
the product with $J-J'$ additional factors equal to $1$. Applying H\"older's inequality with all
exponents equal to $J$, we obtain
\[
  \EE[\mathcal T(n,m,n',m')]
  \le C \prod_{j=1}^{J} \|G_j\|_{L^J}.
\]
Now, if $G_j$ corresponds to a gap of length $u\in\{a,b,c,a+b,b+c,a+b+c\}$, then by stationarity of
the increments of $S$,
\[
  \|G_j\|_{L^J}
  =\EE\big[Q_u(S_u)^J\big]^{1/J}
  =:\rho_u.
\]
Hence
\begin{equation}\label{eq:Tnm-rho-bound-rewrite}
  \EE[\mathcal T(n,m,n',m')]
  \le C \prod_{j=1}^{J}\rho_{u_j},
\end{equation}
where each $u_j$ belongs to $\{a,b,c,a+b,b+c,a+b+c\}$.

By Lemma~\ref{lem:moments-decay-Yk} applied with $m=J$, there exist constants $c_0,c_1>0$,
depending only on $k$, such that
\begin{equation}\label{eq:rho-decay-rewrite}
  \rho_u\le c_1\,u^{-1/2}e^{-c_0u},
  \qquad u\ge1.
\end{equation}
In particular,
\[
  \rho_{a+b}\le C\,a^{-1/2}e^{-c_0a}\,e^{-c_0b},
  \qquad
  \rho_{b+c}\le C\,b^{-1/2}e^{-c_0b}\,e^{-c_0c},
\]
and similarly
\[
  \rho_{a+b+c}\le C\,a^{-1/2}e^{-c_0a}\,e^{-c_0b}\,e^{-c_0c}.
\]
It follows from \eqref{eq:Tnm-rho-bound-rewrite} that there exist summable sequences
$u^{(1)},u^{(2)},u^{(3)}$ on $\NN$ (depending only on $k$) such that
\begin{equation}\label{eq:Tnm-gap-bound-rewrite}
  \EE[\mathcal T(n,m,n',m')]
  \le C\,u^{(1)}_a\,u^{(2)}_b\,u^{(3)}_c .
\end{equation}

Summing \eqref{eq:Tnm-gap-bound-rewrite} over all quadruples with four distinct indices yields,
up to a combinatorial factor at most $4!$ that we absorb into $C$,
\[
  \sum_{\substack{n,m,n',m'=1\\ |\{n,m,n',m'\}|=4}}^N
  \EE[\mathcal T(n,m,n',m')]
  \le
  C\sum_{a,b,c\ge1}(N-a-b-c)_+\,u^{(1)}_a u^{(2)}_b u^{(3)}_c .
\]
By Lemma~\ref{lem:conv-count-Yk} with $M=3$, the right-hand side is bounded by $C N$.

\medskip
\noindent\emph{Step 2: degenerate configurations.}
Assume now that the set $\{n,m,n',m'\}$ has cardinality at most three. Then one or more of the time
gaps vanish, and the corresponding transition kernels reduce to $Q_0(0)=1$. Consequently, the same
argument as above yields bounds involving at most two positive gaps (there are three distinct
times), or at most one positive gap (there are only two distinct times), or none gap (all equal). Thus these contributions
are bounded by expressions of the form
\[
  C\sum_{a,b\ge1}(N-a-b)_+\,\widetilde u^{(1)}_a\widetilde u^{(2)}_b
  \qquad\text{or}\qquad
  C\sum_{a\ge1}(N-a)_+\,\widehat u_a
  \qquad\text{or}\qquad
  CN,
\]
for suitable summable sequences $\widetilde u^{(1)},\widetilde u^{(2)},\widehat u$ depending only on
$k$. Applying Lemma~\ref{lem:conv-count-Yk} with $M=2$ or $M=1$ shows that all degenerate
configurations also contribute at most $CN$.

Combining the distinct-time and degenerate cases, we obtain
\[
  \sum_{n,m,n',m'=1}^N \EE[\mathcal T(n,m,n',m')]
  \le C N.
\]
Together with \eqref{eq:Tnm-def-rewrite}, this proves
\[
  \EE\big[\|\bar H_{q_1,q_2,r,\ell,S}\|_{L^2(\mu^{\otimes d})}^2\big]\le C N.
\]
Since $\|H_{q_1,q_2,r,\ell,S}\|_2\le \|\bar H_{q_1,q_2,r,\ell,S}\|_2$, the claimed estimate follows.\qed
\end{proof}
We can now close the estimate for $\mathrm T_1^{(\varphi)}$.
Using \eqref{eq:VarTheta-cond-bound-Yk} and Lemma~\ref{lem:Hrs-L2-Yk},
\[
  \EE\big[\operatorname{Var}_\xi(\Theta_{N,\varphi}\mid S)\big]\le \frac{C}{\sigma_{N,\varphi}^4}\,N.
\]
By \eqref{eq:ETheta-cond-Yk} and Lemma~\ref{lem:VarVS-Yk},
\[
  \operatorname{Var}_S\big(\EE_\xi[\Theta_{N,\varphi}\mid S]\big)
  =\operatorname{Var}_S\Big(\frac{V_S}{\sigma_{N,\varphi}^2}\Big)
  =\frac{1}{\sigma_{N,\varphi}^4}\operatorname{Var}_S(V_S)
  \le \frac{C}{\sigma_{N,\varphi}^4}\,N.
\]
Therefore, by total variance,
\[
  \operatorname{Var}(\Theta_{N,\varphi})\le \frac{C}{\sigma_{N,\varphi}^4}\,N.
\]
Since $\sigma_{N,\varphi}^2\asymp N$ (Lemma~\ref{lem:Yk-Var}), we have $\sigma_{N,\varphi}^4\asymp N^2$ and thus
$\operatorname{Var}(\Theta_{N,\varphi})\le C/N$. Plugging into \eqref{eq:term1-cs-Yk} gives
\begin{equation}\label{eq:term1-final-Yk}
  \mathrm T_1^{(\varphi)}\le C\,N^{-1/2}.
\end{equation}

\noindent\textbf{Estimate of $\mathrm T_2^{(\varphi)}$.}

We now bound $\mathrm T_2^{(\varphi)}$ in \eqref{eq:MS-Yk}. Using the elementary inequality
$|\sum_{j=1}^m x_j|^3\le m^2\sum_{j=1}^m |x_j|^3$ together with the finiteness of the sums in
\eqref{eq:DF-Yk}--\eqref{eq:DLinv-Yk}, there exists $C:=C(\varphi)<\infty$ such that for all $w$,
\begin{align}
  |D_wF_{N,\varphi}|^2\,|D_wL^{-1}F_{N,\varphi}|
  &\le \frac{C}{\sigma_{N,\varphi}^3}
       \sum_{q=1}^k \Big|I_{q-1}\big(f_{q,S}(w,\cdot)\big)\Big|^3.
  \label{eq:term2-pointwise-Yk}
\end{align}
Integrating over $w$ and taking expectation yields
\begin{equation}\label{eq:term2-reduction-Yk}
  \EE\int_{\WW} |D_wF_{N,\varphi}|^2\,|D_wL^{-1}F_{N,\varphi}|\,\mu(dw)
  \le \frac{C}{\sigma_{N,\varphi}^3}\sum_{q=1}^k T_{q,N},
\end{equation}
where
\[
  T_{q,N}:=\EE\int_{\WW}\Big|I_{q-1}\big(f_{q,S}(w,\cdot)\big)\Big|^3\,\mu(dw).
\]
Thus it suffices to show that $T_{q,N}=O(N)$ for each fixed $q\le k$. We will do so by combining a
moment bound for multiple integrals.

\begin{lemma}[Fourth moment bound via contractions]\label{lem:Iq-fourthmoment-contraction}
Fix an integer $q\ge1$. Let $h\in L_s^2(\mu^{\otimes q})$ be symmetric and assume that, for every
$0\le r\le q$ and $0\le \ell\le r$, the contraction kernel
$h\widetilde\star_r^\ell h$ belongs to $L^2(\mu^{\otimes(2q-r-\ell)})$. Then there exists
$C:=C(q)<\infty$ (depending only on $q$) such that
\begin{equation}\label{eq:Iq-fourthmoment-contraction}
  \EE_\xi\big[I_q(h)^4\big]
  \;\le\;
  C\sum_{r=0}^{q}\sum_{\ell=0}^{r} (2q-r-\ell)!\;
  \big\|h\widetilde{\star}_r^\ell h\big\|_{L^2(\mu^{\otimes(2q-r-\ell)})}^2.
\end{equation}
\end{lemma}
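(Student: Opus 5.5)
The plan is to write $I_q(h)^4=\big(I_q(h)^2\big)^2$ and expand $I_q(h)^2$ into chaoses with the product formula \eqref{mutiIn} (taking $q_1=q_2=q$ and $f_1=f_2=h$):
\[
  I_q(h)^2=\sum_{r=0}^{q} r!\binom{q}{r}^2\sum_{\ell=0}^{r}\binom{r}{\ell}\,I_{2q-r-\ell}\big(h\,\widetilde\star_r^\ell\,h\big).
\]
The assumption that every $h\widetilde\star_r^\ell h$ lies in $L^2(\mu^{\otimes(2q-r-\ell)})$ has two consequences. Each term on the right is a well-defined element of $L^2(\PP_\eta)$, and $I_q(h)^2\in L^2(\PP_\eta)$, so $\EE[I_q(h)^4]<\infty$. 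For this to hold, the product formula has to be valid in $L^2$ under exactly these integrability conditions. I would cite \cite[Proposition 6.5.1]{peccati2011wiener} together with the standard condition there. Alternatively, I would first prove the bound for simple (finite-sum, bounded, finite-support) kernels and then pass to the limit, although in our application the kernels are already finite sums of indicator tensors.

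The next step is to regroup the double sum by chaos order $p=2q-r-\ell\in\{0,\dots,2q\}$. This gives $I_q(h)^2=\sum_{p=0}^{2q}I_p(g_p)$, where $g_p=\sum_{(r,\ell):\,2q-r-\ell=p} \kappa_{r,\ell}\,h\widetilde\star_r^\ell h$ and $\kappa_{r,\ell}=r!\binom{q}{r}^2\binom{r}{\ell}$. The isometry and orthogonality \eqref{e:I_n} then give
\[
  \EE\big[I_q(h)^4\big]=\sum_{p=0}^{2q}p!\,\|g_p\|^2 .
\]
On the zeroth chaos, $I_0$ is just the constant $g_0=q!\|h\|^2$, which equals $\kappa_{q,q}\,h\star_q^q h$. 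This term fits the same format and is the case $(r,\ell)=(q,q)$ in the claimed sum.

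Finally, for fixed $p$ the number of pairs $(r,\ell)$ with $2q-r-\ell=p$ is at most $q+1$. By Cauchy--Schwarz on this finite sum,
\[
  \|g_p\|^2\le (q+1)\sum_{2q-r-\ell=p}\kappa_{r,\ell}^2\,\|h\widetilde\star_r^\ell h\|^2 .
\]
Multiplying by $p!$ and summing over $p$ gives the claimed bound with $C(q)=(q+1)\max_{r,\ell}\kappa_{r,\ell}^2$, which depends only on $q$.

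The only delicate point is justifying the product formula and the resulting $L^2$ chaos expansion of $I_q(h)^2$ under the stated integrability hypotheses (in general Poisson settings one needs $h\star_r^\ell h\in L^2$, and the symmetrized versions suffice for the isometry). The algebra itself is routine. In the intended application, $h=f_{q,S}(w,\cdot)$ is a finite linear combination of indicator tensors of sets of finite $\mu$-measure, so all contractions are trivially in $L^2$ and the identity holds pathwise.
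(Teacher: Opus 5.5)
Your proposal is correct and follows the paper's proof essentially step for step: you expand $I_q(h)^2$ by the product formula, regroup the kernels by chaos order, use orthogonality and the isometry to get $\EE[I_q(h)^4]=\sum_p p!\,\|g_p\|^2$, and bound each $\|g_p\|^2$ by Cauchy--Schwarz over the finitely many $(r,\ell)$ with $2q-r-\ell=p$. You also flag the need to justify the product formula under the stated integrability hypotheses, which the paper uses without comment; this is harmless in the application, where $h$ is a finite sum of indicator tensors.
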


\begin{proof}
By the product formula \eqref{mutiIn} with $q_1=q_2=q$ and $f_1=f_2=h$,
\[
  I_q(h)^2
  =\sum_{r=0}^{q} r!\binom{q}{r}^2\sum_{\ell=0}^{r}\binom{r}{\ell}\,
   I_{m(r,\ell)}\big(h\widetilde\star_r^\ell h\big),
  \qquad m(r,\ell):=2q-r-\ell.
\]
Since different pairs $(r,\ell)$ may yield the same chaos order $m$, hence the
corresponding multiple integrals need not be orthogonal. We therefore group by chaos order.
For each $m\in\{0,1,\dots,2q\}$ set
\[
  G_m
  :=
  \sum_{\substack{0\le r\le q,\;0\le \ell\le r:\\ m(r,\ell)=m}}
  \alpha_{r,\ell}\,\big(h\widetilde\star_r^\ell h\big),
  \qquad
  \alpha_{r,\ell}:=r!\binom{q}{r}^2\binom{r}{\ell}.
\]
Then $I_q(h)^2=\sum_{m=0}^{2q} I_m(G_m)$. By orthogonality of different chaos orders,
\[
  \EE_\xi\big[I_q(h)^4\big]
  =\sum_{m=0}^{2q}\EE_\xi\big[I_m(G_m)^2\big]
  =\sum_{m=0}^{2q} m!\,\|G_m\|_{L^2(\mu^{\otimes m})}^2.
\]
For each fixed $m$, the sum defining $G_m$ contains only finitely many terms depending on $q$.
Hence, using $\|\sum_{i=1}^M u_i\|_2^2\le M\sum_{i=1}^M\|u_i\|_2^2$ and absorbing the (finite)
combinatorial factors into $C$, we obtain
\[
  m!\,\|G_m\|_2^2
  \le
  C\sum_{\substack{r,\ell:\\ m(r,\ell)=m}} m!\,\|h\widetilde\star_r^\ell h\|_2^2.
\]
Summing over $m$ yields \eqref{eq:Iq-fourthmoment-contraction}.\qed
\end{proof}

\begin{corollary}[Third moment bound via contractions]\label{cor:Iq-thirdmoment-contraction}
Under the assumptions of Lemma~\ref{lem:Iq-fourthmoment-contraction}, there exists $C:=C(q)<\infty$
(depending only on $q$) such that
\begin{equation}\label{eq:Iq-thirdmoment-contraction}
  \EE_\xi\big[|I_q(h)|^3\big]
  \le
  C\,\|h\|_{L^2(\mu^{\otimes q})}\,
  \Bigg(\sum_{r=0}^{q}\sum_{\ell=0}^{r}
  \big\|h\widetilde{\star}_r^\ell h\big\|_{L^2(\mu^{\otimes(2q-r-\ell)})}^2\Bigg)^{1/2}.
\end{equation}
\end{corollary}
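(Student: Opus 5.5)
The bound follows from Cauchy--Schwarz combined with Lemma~\ref{lem:Iq-fourthmoment-contraction}. We write
\[
  \EE_\xi\big[|I_q(h)|^3\big]=\EE_\xi\big[|I_q(h)|\cdot I_q(h)^2\big]
  \le \Big(\EE_\xi\big[I_q(h)^2\big]\Big)^{1/2}\Big(\EE_\xi\big[I_q(h)^4\big]\Big)^{1/2}.
\]
By the isometry \eqref{e:I_n}, the first factor equals $\sqrt{q!}\,\|h\|_{L^2(\mu^{\otimes q})}$. This is finite because $h\in L^2_s(\mu^{\otimes q})$, and $I_q(h)^4$ is integrable because its fourth moment is controlled in Lemma~\ref{lem:Iq-fourthmoment-contraction}. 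Since $\sqrt{q!}$ depends only on $q$, it can be absorbed into the constant.

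For the second factor we invoke \eqref{eq:Iq-fourthmoment-contraction} directly:
\[
  \EE_\xi\big[I_q(h)^4\big]\le C\sum_{r=0}^{q}\sum_{\ell=0}^{r}(2q-r-\ell)!\,\big\|h\widetilde\star_r^\ell h\big\|_{L^2(\mu^{\otimes(2q-r-\ell)})}^2 .
\]
Each weight satisfies $(2q-r-\ell)!\le(2q)!$, so every weight is bounded by a constant depending only on $q$. Pulling out this bound and taking square roots gives
\[
  \Big(\EE_\xi\big[I_q(h)^4\big]\Big)^{1/2}\le C(q)\Bigg(\sum_{r=0}^{q}\sum_{\ell=0}^{r}\big\|h\widetilde\star_r^\ell h\big\|_{L^2(\mu^{\otimes(2q-r-\ell)})}^2\Bigg)^{1/2}.
\]

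Multiplying the two factors yields \eqref{eq:Iq-thirdmoment-contraction}, with a constant depending only on $q$.

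The only bookkeeping point concerns the term $(r,\ell)=(q,q)$, which is a zeroth-chaos constant equal to $\|h\|^2_{L^2(\mu^{\otimes q})}$. It is already included in the contraction sum, so no separate treatment is needed. Similarly, in the degenerate case $q=0$ the statement is trivial, since $I_0(h)$ is the constant $h$. In the intended application $I_{q-1}(f_{q,S}(w,\cdot))$ with $q=1$, the left-hand side is $|h|^3$, and the right-hand side contains $\|h\|\cdot|h|^2$ through the $(0,0)$ contraction, so the inequality holds there as well.
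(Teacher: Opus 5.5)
Your proposal is correct and is essentially the paper's proof: Cauchy--Schwarz splits $\EE_\xi|I_q(h)|^3$ into the second and fourth moments, the isometry \eqref{e:I_n} gives $\sqrt{q!}\,\|h\|_{L^2(\mu^{\otimes q})}$ for the first factor, and Lemma~\ref{lem:Iq-fourthmoment-contraction} handles the second, with the weights $(2q-r-\ell)!\le(2q)!$ absorbed into $C(q)$ (the paper leaves this absorption implicit). Your closing remarks on the $(q,q)$ contraction and the $q=0$ case are harmless extras; the paper treats the case $q=1$ of $T_{q,N}$ separately and never invokes the corollary there.
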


\begin{proof}
By Cauchy--Schwarz,
\[
  \EE_\xi\big[|I_q(h)|^3\big]
  \le \EE_\xi\big[I_q(h)^2\big]^{1/2}\,\EE_\xi\big[I_q(h)^4\big]^{1/2}
  =\sqrt{q!}\,\|h\|_2\,\EE_\xi\big[I_q(h)^4\big]^{1/2}.
\]
Inserting \eqref{eq:Iq-fourthmoment-contraction} and absorbing constants into $C$.\qed
\end{proof}

\begin{lemma}\label{lem:TqN-ON-fixed}
Fix $q\in\{1,\dots,k\}$ and write $q':=q-1$. Under \eqref{eq:drift-assumption-Yk}, there exists
$C:=C(\varphi)<\infty$ such that for all $N\ge1$,
\[
  T_{q,N}
  :=\EE\int_{\WW}\Big|I_{q'}\big(f_{q,S}(w,\cdot)\big)\Big|^3\,\mu(dw)
  \le C\,N.
\]
\end{lemma}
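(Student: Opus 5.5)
The plan is to handle $q=1$ directly and, for $q\ge2$, combine Corollary~\ref{cor:Iq-thirdmoment-contraction} with the gap-counting scheme of Lemma~\ref{lem:Hrs-L2-Yk}.

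\textbf{The case $q=1$.} Here $I_0(f_{1,S}(w))=f_{1,S}(w)=\sum_n\mathbf 1_{\mathcal A_n}(w)$ is deterministic given $S$. The bound $T_{1,N}\le CN$ is exactly the triple-intersection computation (i)--(iii) already done for $\varphi(x)=x$.

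\textbf{Reduction for $q\ge2$.} Fix $w$ and $S$, and set $h_w:=f_{q,S}(w,\cdot)=\sum_n\mathbf 1_{\mathcal A_n}(w)\mathbf 1_{\mathcal A_n}^{\otimes q'}$. Corollary~\ref{cor:Iq-thirdmoment-contraction} gives
\[
\EE_\xi|I_{q'}(h_w)|^3\le C\|h_w\|\,\Big(\sum_{r,\ell}\|h_w\widetilde\star_r^\ell h_w\|^2\Big)^{1/2}.
\]
Integrating in $w$ and applying Cauchy--Schwarz twice (first in $\mu(dw)$, then in $\EE_S$) yields
\[
T_{q,N}\le C\Big(\EE_S\!\int\|h_w\|^2\mu(dw)\Big)^{1/2}\Big(\EE_S\!\int\sum_{r,\ell}\|h_w\star_r^\ell h_w\|^2\mu(dw)\Big)^{1/2}.
\]
Symmetrization is dropped since it is an $L^2$ contraction. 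The first factor equals $\EE_S\|f_{q,S}\|^2=\lambda^{q'}\cdot O(N)$ by \eqref{F_q,s} and Lemma~\ref{lem:moments-decay-Yk}. It therefore suffices to show the second factor is $O(N)$.

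\textbf{The contraction term.} By \eqref{eq:indicator-contraction-rewrite},
\[
h_w\star_r^\ell h_w=\sum_{n,m}\mathbf 1_{\mathcal A_n\cap\mathcal A_m}(w)\,\mu(\mathcal A_n\cap\mathcal A_m)^\ell\,\mathbf 1_{\mathcal A_n\cap\mathcal A_m}^{\otimes(r-\ell)}\otimes\mathbf 1_{\mathcal A_n}^{\otimes(q'-r)}\otimes\mathbf 1_{\mathcal A_m}^{\otimes(q'-r)}.
\]
Squaring and integrating in $w$ gives a sum over $(n,m,n',m')$ of products of $\mu$-measures of intersections of the cylinders $\mathcal A_n,\mathcal A_m,\mathcal A_{n'},\mathcal A_{m'}$. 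The $w$-integral supplies a factor $\mu(\mathcal A_n\cap\mathcal A_m\cap\mathcal A_{n'}\cap\mathcal A_{m'})$. This is the same structure as $\mathcal T(n,m,n',m')$ in Lemma~\ref{lem:Hrs-L2-Yk}, with at least one full fourfold intersection factor present.

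By the Markov property, every factor is a product of $Q$-kernels over the gaps $a,b,c$ of the ordered times, including combined gaps $a+b$, $b+c$, $a+b+c$. I would then repeat that lemma's argument step for step:
\begin{itemize}
\item bound each factor by $1$ or by kernel factors, and apply H\"older with a $k$-dependent exponent $J$;
\item use Lemma~\ref{lem:moments-decay-Yk} to get $\rho_u\le Cu^{-1/2}e^{-c_0u}$;
\item split combined gaps as $\rho_{a+b}\le C\rho'_a e^{-c_0b}$, and so on;
\item conclude with Lemma~\ref{lem:conv-count-Yk} for $M=3,2,1$ according to the number of distinct times, giving $O(N)$.
\end{itemize}

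\textbf{Main obstacle.} The delicate point is making sure every gap gets an exponentially decaying factor, so that only the single free starting time contributes a factor $N$. For instance, in the term $r=\ell=0$, the first-chaos pieces involve only $\mu(\mathcal A_n\cap\mathcal A_{n'})$ and $\mu(\mathcal A_m\cap\mathcal A_{m'})$.

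The fourfold intersection $\mu(\mathcal A_n\cap\mathcal A_m\cap\mathcal A_{n'}\cap\mathcal A_{m'})$ coming from the $w$-integral rescues this. It equals $\lambda Q_aQ_bQ_c$, so it links all four times by consecutive gaps in every configuration. I therefore expect it to be enough, and the degenerate coincidences reduce the number of gaps exactly as in Step~2 of Lemma~\ref{lem:Hrs-L2-Yk}. Combining the two factors gives $T_{q,N}\le C\sqrt{N}\sqrt{N}=CN$.
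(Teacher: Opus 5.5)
Your proposal is correct and follows the paper's proof essentially step for step. The $q=1$ case reduces to the triple-intersection count. For $q\ge2$ you use the third-moment corollary, Cauchy--Schwarz first in $\mu(dw)$ and then in $\PP_S$, the explicit indicator-contraction formula, and the H\"older-and-gap argument of Lemma~\ref{lem:Hrs-L2-Yk} closed by Lemma~\ref{lem:conv-count-Yk}. You also make explicit something the paper only asserts ``verbatim'': the fourfold intersection always carries exponent $r-\ell+1\ge1$, which is what guarantees every gap is covered. The only slip is that the first factor equals $\lambda^{q}\bigl(N+2\,\EE_S\Sigma^{(q)}_S\bigr)$ rather than $\lambda^{q'}\cdot O(N)$, which is immaterial.
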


\begin{proof}
We treat $q=1$ and $q\ge2$ separately.

\medskip\noindent
\textbf{Case $q=1$.}
Then $q'=0$ and $I_{q'}=I_0$ is the identity. Since $f_{1,S}(w)=\sum_{n=1}^N\mathbf 1_{\mathcal{A}_n}(w)$, the
bound $T_{1,N}=O(N)$ follows as in the low-order case by expanding $f_{1,S}^3$ and using the
summability $\sum_{t\ge1}a_t^{(1)}<\infty$ from Lemma~\ref{lem:moments-decay-Yk}.

\medskip\noindent
\textbf{Case $q\ge2$.}
Fix $S$ and $w\in\WW$ and set
\begin{equation}\label{eq:hw-def}
  h_w:=f_{q,S}(w,\cdot)\in L_s^2(\mu^{\otimes q'}),\qquad
  h_w(u_1,\dots,u_{q'})=\sum_{n=1}^N \mathbf 1_{\mathcal{A}_n}(w)\prod_{j=1}^{q'} \mathbf 1_{\mathcal{A}_n}(u_j).
\end{equation}
Since $h_w$ is a finite sum of indicator tensors of sets with finite $\mu$-mass, all its
contractions $h_w\widetilde\star_r^\ell h_w$ belong to the required $L^2$ spaces. Here and below, for the scalar contraction corresponding to $(r,\ell)=(q',q')$, we use the convention
$L^2(\mu^{\otimes0})=\mathbb R$.

Applying Corollary~\ref{cor:Iq-thirdmoment-contraction} with $q=q'$ to $h=h_w$ (conditionally on $S$),
we obtain
\begin{equation}\label{eq:cond-thirdmoment-hw}
  \EE_\xi\big[|I_q'(h_w)|^3\mid S\big]
  \le
  C_{q'}\,\|h_w\|_{L^2(\mu^{\otimes q'})}\,
  \Bigg(\sum_{r=0}^{q'}\sum_{\ell=0}^{r}\big\|h_w\widetilde\star_r^\ell h_w\big\|_2^2\Bigg)^{1/2}.
\end{equation}
Integrating \eqref{eq:cond-thirdmoment-hw} with respect to $\mu(dw)$ and using Cauchy--Schwarz in
$\mu(dw)$ yields
\begin{align}
  \int_{\WW}\EE_\xi\big[|I_q'(h_w)|^3\mid S\big]\,\mu(dw)
  &\le C_{q'}\Big(\int_{\WW}\|h_w\|_2^2\,\mu(dw)\Big)^{1/2}
     \Big(\int_{\WW}\sum_{r,\ell}\|h_w\widetilde\star_r^\ell h_w\|_2^2\,\mu(dw)\Big)^{1/2}.
  \label{eq:CS-mu-TqN}
\end{align}
Taking $\EE_S$ and using Cauchy--Schwarz in $\PP_S$ gives
\begin{equation}\label{eq:TqN-reduction}
  T_{q,N}
  \le C_{q'}
  \Big(\EE\int_{\WW}\|h_w\|_2^2\,\mu(dw)\Big)^{1/2}
  \Big(\EE\int_{\WW}\sum_{r,\ell}\|h_w\widetilde\star_r^\ell h_w\|_2^2\,\mu(dw)\Big)^{1/2}.
\end{equation}

We first evaluate the factor $\EE\int \|h_w\|_2^2\,\mu(dw)$. A direct computation using
\eqref{eq:hw-def} yields
\[
  \int_{\WW}\|h_w\|_2^2\,\mu(dw)
  =\sum_{n,m=1}^N \mu(\mathcal{A}_n\cap \mathcal{A}_m)^{q'+1}
  =\sum_{n,m=1}^N \mu(\mathcal{A}_n\cap \mathcal{A}_m)^{q}.
\]
Since $\mu(\mathcal{A}_n\cap \mathcal{A}_m)=\lambda Q_{|m-n|}(S_m-S_n)$, taking expectation and using stationary
increments we obtain
\[
  \EE\int_{\WW}\|h_w\|_2^2\,\mu(dw)
  =\lambda^q\Big(N+2\sum_{t=1}^{N-1}(N-t)\,a_t^{(q)}\Big)=O(N),
\]
because $\sum_{t\ge1}a_t^{(q)}<\infty$ by Lemma~\ref{lem:moments-decay-Yk}.

It remains to bound $\EE\int \|h_w\widetilde\star_r^\ell h_w\|_2^2\,\mu(dw)$ uniformly over
$0\le \ell\le r\le q'$. Since symmetrization is an average over permutations, we have
$\|h_w\widetilde\star_r^\ell h_w\|_2\le \|h_w\star_r^\ell h_w\|_2$, so it suffices to consider the
unsymmetrized contraction. Using \eqref{eq:hw-def}, bilinearity of $\star_r^\ell$, and the explicit
contraction of indicator tensors (as in Lemma~\ref{lem:Hrs-L2-Yk}), one obtains
\[
  h_w\star_r^\ell h_w
  =\sum_{n,m=1}^N \mathbf 1_{\mathcal{A}_n\cap \mathcal{A}_m}(w)\,\mu(\mathcal{A}_n\cap \mathcal{A}_m)^\ell\,
   \mathbf 1_{\mathcal{A}_n\cap \mathcal{A}_m}^{\otimes(r-\ell)}\otimes \mathbf 1_{\mathcal{A}_n}^{\otimes(q'-r)}
   \otimes \mathbf 1_{\mathcal{A}_m}^{\otimes(q'-r)}.
\]
Expanding the $L^2$-norm squared, integrating in $w$, and using the product structure of
$\mu^{\otimes }$ gives
\begin{align}\label{eq:hw-contraction-L2-integrated}
  \int_{\WW}\|h_w\star_r^\ell h_w\|_2^2\,\mu(dw)
  &=
  \sum_{n,m,n',m'=1}^N
  \mu(\mathcal{A}_n\cap \mathcal{A}_m)^\ell\,\mu(\mathcal{A}_{n'}\cap \mathcal{A}_{m'})^\ell \\
  &\quad\times
  \mu(\mathcal{A}_n\cap \mathcal{A}_m\cap \mathcal{A}_{n'}\cap \mathcal{A}_{m'})^{\,r-\ell+1}\,
  \mu(\mathcal{A}_n\cap \mathcal{A}_{n'})^{\,q'-r}\,
  \mu(\mathcal{A}_m\cap \mathcal{A}_{m'})^{\,q'-r}.
  \notag
\end{align}
The resulting quadruple sum is of exactly the same form as in the proof of Lemma~\ref{lem:Hrs-L2-Yk}; only the exponents of the intersection masses change from $(\ell+1,\ell+1,r-\ell,q_1-1-r,q_2-1-r)$ to $(\ell,\ell,r-\ell+1,q'-r,q'-r)$, and all these exponents remain bounded by $k$. Therefore the same H\"older-and-gap argument applies verbatim.

Consequently,
\[
  \EE\int_{\WW}\|h_w\star_r^\ell h_w\|_2^2\,\mu(dw)\le C\,N,
\]
uniformly over $(r,\ell)$, and hence also
$\EE\int_{\WW}\|h_w\widetilde\star_r^\ell h_w\|_2^2\,\mu(dw)\le C\,N$.

Combining these two bounds in \eqref{eq:TqN-reduction} yields $T_{q,N}\le C N$.\qed
\end{proof}

Combining \eqref{eq:term2-reduction-Yk} with Lemma~\ref{lem:TqN-ON-fixed}, we get
\begin{equation}\label{eq:term2-final-Yk}
 \mathrm T_2^{(\varphi)}
  \le \frac{C}{\sigma_{N,\varphi}^3}\sum_{q=1}^k T_{q,N}
  \le \frac{C N}{\sigma_{N,\varphi}^3}.
\end{equation}
Since $\sigma_{N,\varphi}^2\asymp N$ (Lemma~\ref{lem:Yk-Var}), we have $\sigma_{N,\varphi}^3\asymp N^{3/2}$ and thus
\[
  \mathrm T_2^{(\varphi)}
  \le C\,N^{-1/2}.
\]
Finally, combining the bounds for $\mathrm T_1^{(\varphi)}$ and $\mathrm T_2^{(\varphi)}$, we obtain
\[
  \mathrm T_1^{(\varphi)}\le C N^{-1/2},
  \qquad
  \mathrm T_2^{(\varphi)}\le C N^{-1/2}.
\]
Hence, by \eqref{eq:MS-Yk},
\[
  d_W(F_{N,\varphi},Z)
  \le
  \sqrt{\frac2\pi}\,\mathrm T_1^{(\varphi)}+\mathrm T_2^{(\varphi)}
  \le C N^{-1/2}.
\]
This completes the proof for the case $k\ge2$.

\appendix
\section{The symmetric sampling case $\mathfrak p=\tfrac12$}
\label{app:symmetric-sampling}
%%%%%%%%%%%%%%%%%%%%%%%%%%%%%%%%%%%%%%%%%%%%%%%%%%%%%%%%%%%%%%%%%%%%%%%%%%%%%%%
\renewcommand{\theequation}{A.\arabic{equation}}

A natural question is whether the approach developed in Section~\ref{sec:Yk} continues to apply when the
sampling walk is symmetric, that is, when $\mathfrak p=\tfrac12$.

The purpose of this appendix is to clarify this point. The structural ingredients of our proof in the
drifted case remain available under symmetric sampling: the conditional Poisson-chaos expansion, and the annealed Malliavin--Stein framework all continue to hold
without essential change. The difference arises at the level of the quantitative estimates. In the drifted regime, the ballistic behavior of the sampling walk yields summable correlation coefficients,
which is precisely what allows the first Malliavin--Stein term to be shown to decay to zero. In the
symmetric regime, by contrast, these correlations decay only polynomially, and the same argument no
longer yields a decaying bound for that term.

This should not be interpreted as indicating that a central limit theorem fails under symmetric
sampling. Rather, it shows only that the particular quantitative route used in Section~\ref{sec:Yk} does not,
in its present form, close in this regime. Nevertheless, the symmetric case exhibits a nontrivial and
interesting variance structure, governed by the first non-vanishing Poisson--Charlier coefficient of
the observable, and it is therefore worthwhile to record the corresponding calculations explicitly.
We do so below. The question of whether one can nevertheless establish an annealed central limit
theorem, or obtain quenched Wasserstein bounds, under symmetric sampling is left for future work.

In this appendix we discuss the symmetric sampling regime $\mathfrak p=\tfrac12$.
To distinguish it from the drifted sampling walk $S$ used in Section~\ref{sec:Yk}, we denote by
\[
  \widehat S=(\widehat S_n)_{n\ge0}
\]
an independent simple symmetric random walk (SSRW) on $\ZZ$.
Since $\widehat S$ has the same law as the particle walk, its transition kernel coincides with the
particle kernel, and we continue to write
\[
  Q_n(x)=\PP(\widehat S_n=x),\qquad x\in\ZZ,\ n\in\NN_0,
\]
for this common SSRW kernel.

For a fixed polynomial observable
\[
  \varphi:\NN_0\to\RR,
\]
we define the symmetric-sampling path functional
\[
  \widehat Y_{N,\varphi}
  :=\sum_{n=1}^N \varphi\big(\xi(n,\widehat S_n)\big),
  \qquad
  \widehat\sigma_{N,\varphi}^2:=\operatorname{Var}\!\big(\widehat Y_{N,\varphi}\big).
\]

The purpose of this appendix is twofold:
\begin{enumerate}
\item to record the variance regimes under symmetric sampling for a general fixed polynomial
      observable $\varphi$;
\item to explain why the crude conditional-variance step used in the drifted Malliavin--Stein proof
      no longer closes in the rank-one symmetric case.
\end{enumerate}

%==============================================================================
\subsection{A parity-corrected local limit theorem and $\ell^s$-norms of the SSRW kernel}
\label{app:lp-kernel}
%==============================================================================

Recall Lemma~\ref{lem:heat-kernel-rigorous}, we have:
\begin{equation}\label{eq:LCLT-periodic-app}
  \sup_{x\in\ZZ}
  \Bigg|
    \frac{\sqrt n}{2}\,Q_n(x)
    -\mathbf 1_{\{x\equiv n\!\!\!\!\pmod 2\}}
     \frac{1}{\sqrt{2\pi}}
     \exp\!\Big(-\frac{x^2}{2n}\Big)
  \Bigg|
  \xrightarrow[n\to\infty]{}0.
\end{equation}
Equivalently,
\begin{equation}\label{eq:LCLT-periodic-app-equivalent}
  Q_n(x)
  =
  \mathbf 1_{\{x\equiv n\!\!\!\!\pmod 2\}}
  \frac{2}{\sqrt{2\pi n}}
  \exp\!\Big(-\frac{x^2}{2n}\Big)
  +o(n^{-1/2}),
\end{equation}
uniformly in $x\in\ZZ$.

We now deduce the asymptotics of the $\ell^s$-norms of $Q_n$.

\begin{lemma}[$\ell^s$-asymptotics for the SSRW kernel]\label{lem:lp-Qn-app}
Fix an integer $s\ge2$. Then, as $n\to\infty$,
\begin{equation}\label{eq:lp-asymp-app}
  \sum_{x\in\ZZ} Q_n(x)^s
  = \kappa_s\,n^{-(s-1)/2}+o\!\big(n^{-(s-1)/2}\big),
  \qquad
  \kappa_s:=\frac{2^{(s-1)/2}}{\pi^{(s-1)/2}\sqrt{s}}.
\end{equation}
In particular,
\[
  \sum_{x\in\ZZ}Q_n(x)^2\sim \frac{1}{\sqrt{\pi n}},
  \qquad
  \sum_{x\in\ZZ}Q_n(x)^3\sim \frac{2}{\pi\sqrt{3}}\,\frac{1}{n}.
\]
\end{lemma}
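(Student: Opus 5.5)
The plan is to insert the parity-corrected local limit theorem \eqref{eq:LCLT-periodic-app-equivalent} into $\sum_x Q_n(x)^s$ and compare the result with a Gaussian Riemann sum. Write
\[
Q_n(x)=\mathbf 1_{\{x\equiv n\ (\mathrm{mod}\ 2)\}}\,g_n(x)+\varepsilon_n(x),\qquad g_n(x):=\frac{2}{\sqrt{2\pi n}}e^{-x^2/(2n)},\qquad \sup_x|\varepsilon_n(x)|=o(n^{-1/2}).
\]
Throughout, note that $Q_n(x)=0$ whenever $x\not\equiv n \pmod 2$, so the sum runs only over $x$ of the correct parity.

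\textbf{Main term.} We compute $M_n:=\sum_{x\equiv n}g_n(x)^s$. Writing $x=2j+h$ with $h\in\{0,1\}$, this is a Riemann sum with mesh $2$:
\[
M_n=\Big(\frac{2}{\sqrt{2\pi n}}\Big)^s\sum_{x\equiv n}e^{-sx^2/(2n)}.
\]
The Gaussian sum equals $\tfrac12\int_{\RR}e^{-sy^2/(2n)}\,dy+O(1)=\tfrac12\sqrt{2\pi n/s}+O(1)$. The $O(1)$ error comes from monotonicity on each half-line, since the total variation of the integrand is bounded. Hence
\[
M_n=2^{s-1}(2\pi n)^{-(s-1)/2}s^{-1/2}\,(1+o(1))=\frac{2^{(s-1)/2}}{\pi^{(s-1)/2}\sqrt s}\,n^{-(s-1)/2}(1+o(1)),
\]
which is exactly $\kappa_s$. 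As a sanity check, $s=2$ gives $1/\sqrt{\pi n}$ and $s=3$ gives $2/(\pi\sqrt3\,n)$, both matching the statement.

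\textbf{Error term — the main obstacle.} The uniform $o(n^{-1/2})$ error cannot simply be summed over all $x$, because there are infinitely many sites. I would split at $|x|\le K\sqrt n$ for a large fixed $K$.

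\emph{Inside} $|x|\le K\sqrt n$, use the elementary inequality $|a^s-b^s|\le s|a-b|\max(a,b)^{s-1}$ together with $Q_n,g_n\le Cn^{-1/2}$ from Lemma~\ref{lem:heat-kernel-rigorous}. Each site then contributes $o(n^{-1/2})\cdot n^{-(s-1)/2}$. Over $O(K\sqrt n)$ sites the total is $K\cdot o(n^{-(s-1)/2})$.

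\emph{Outside} $|x|>K\sqrt n$, bound both sums by the Gaussian tail. Use $Q_n(x)\le Cn^{-1/2}e^{-x^2/(2n)}$ from Lemma~\ref{lem:heat-kernel-rigorous} (or Hoeffding if one prefers to avoid the constant $2n$). Then
\[
\sum_{|x|>K\sqrt n}Q_n(x)^s\le C^s n^{-s/2}\sum_{|x|>K\sqrt n}e^{-x^2/(2n)}\le C n^{-(s-1)/2}e^{-K^2/4},
\]
and the same bound holds for $g_n^s$.

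\textbf{Conclusion.} Dividing by $n^{-(s-1)/2}$, we get $\limsup_n\big|n^{(s-1)/2}\sum_xQ_n(x)^s-\kappa_s\big|\le Ce^{-K^2/4}$ for every $K$. Letting $K\to\infty$ yields \eqref{eq:lp-asymp-app}, and the two special cases follow by substituting $s=2,3$.
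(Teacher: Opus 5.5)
Your proof is correct and follows essentially the same route as the paper: the parity-corrected LCLT, a split at $|x|\le K\sqrt n$, a mean-value comparison in the central region, Gaussian-tail control outside it, a mesh-$2$ Riemann sum for the main term, and finally $K\to\infty$. The differences are minor. Monotonicity gives you an $O(1)$ Riemann-sum error instead of the paper's $o(\sqrt n)$. For the tail you use the pointwise bound $Q_n(x)\le Cn^{-1/2}e^{-x^2/(2n)}$, whereas the paper uses $\bigl(\sup_x Q_n(x)\bigr)^{s-1}\,\PP(|\widehat S_n|>K\sqrt n)$ together with Hoeffding. Your parenthetical Hoeffding alternative is the more robust choice: the pointwise Gaussian bound is true for SSRW, but it does not follow from the LCLT's uniform additive error alone and needs a Stirling-type binomial estimate.
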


\begin{proof}
Fix $s\ge2$. Let
\[
  g_n(x):=
  \mathbf 1_{\{x\equiv n\!\!\!\!\pmod 2\}}
  \frac{2}{\sqrt{2\pi n}}
  \exp\!\Big(-\frac{x^2}{2n}\Big).
\]
By \eqref{eq:LCLT-periodic-app-equivalent}, there exists a deterministic sequence
$\varepsilon_n\downarrow0$ such that
\[
  |Q_n(x)-g_n(x)|\le \varepsilon_n n^{-1/2}
  \qquad\text{for all }x\in\ZZ.
\]

Fix $M>0$ and split the sum into the central region $|x|\le M\sqrt n$ and the tail
$|x|>M\sqrt n$.

On the central region, uniformly in $|x|\le M\sqrt n$, both $Q_n(x)$ and $g_n(x)$ are of order
$n^{-1/2}$. Hence, by the mean-value theorem,
\[
  |Q_n(x)^s-g_n(x)^s|
  \le C_s\big(Q_n(x)^{s-1}+g_n(x)^{s-1}\big)\,|Q_n(x)-g_n(x)|
  =o(n^{-s/2})
\]
uniformly for $|x|\le M\sqrt n$. Since the number of lattice points in this region is $O(\sqrt n)$,
we obtain
\begin{equation}\label{eq:lp-central-app}
  \sum_{\substack{|x|\le M\sqrt n\\ x\equiv n\!\!\!\!\pmod 2}}Q_n(x)^s
  =
  \sum_{\substack{|x|\le M\sqrt n\\ x\equiv n\!\!\!\!\pmod 2}}g_n(x)^s
  +o\!\big(n^{-(s-1)/2}\big).
\end{equation}

For the tail, the standard heat-kernel bound gives
\[
  \sup_{x\in\ZZ}Q_n(x)\lesssim n^{-1/2}.
\]
Moreover, by Hoeffding's inequality for the SSRW, there exists $c>0$ such that
\[
  \PP(|\widehat S_n|\ge u)\le 2e^{-cu^2/n},\qquad u\ge0.
\]
Therefore
\begin{align}
  \sum_{|x|>M\sqrt n}Q_n(x)^s
  &\le \big(\sup_x Q_n(x)\big)^{s-1}\sum_{|x|>M\sqrt n}Q_n(x)\notag\\
  &\le C\,n^{-(s-1)/2}\,\PP(|\widehat S_n|>M\sqrt n)\notag\\
  &\le C\,n^{-(s-1)/2}e^{-cM^2}.
  \label{eq:lp-tail-app}
\end{align}
An analogous estimate holds for $g_n$:
\begin{equation}\label{eq:lp-tail-gn-app}
  \sum_{|x|>M\sqrt n}g_n(x)^s
  \le C\,n^{-(s-1)/2}e^{-cM^2}.
\end{equation}

It remains to evaluate the main term. Since summation over one parity class has mesh size $2$, we have
\begin{align*}
  \sum_{\substack{x\in\ZZ\\ x\equiv n\!\!\!\!\pmod 2}}g_n(x)^s
  &=
  \Big(\frac{2}{\sqrt{2\pi n}}\Big)^s
  \sum_{\substack{x\in\ZZ\\ x\equiv n\!\!\!\!\pmod 2}}
  \exp\!\Big(-\frac{s x^2}{2n}\Big) \\
  &=
  \Big(\frac{2}{\sqrt{2\pi n}}\Big)^s
  \left(
    \frac12\int_{\RR}\exp\!\Big(-\frac{s u^2}{2n}\Big)\,du
    +o(\sqrt n)
  \right) \\
  &=
  \Big(\frac{2}{\sqrt{2\pi n}}\Big)^s
  \left(
    \frac12\sqrt{\frac{2\pi n}{s}}+o(\sqrt n)
  \right) \\
  &=
  \kappa_s\,n^{-(s-1)/2}+o\!\big(n^{-(s-1)/2}\big).
\end{align*}
Combining this with \eqref{eq:lp-central-app}, \eqref{eq:lp-tail-app}, and
\eqref{eq:lp-tail-gn-app}, and then letting $M\to\infty$, proves \eqref{eq:lp-asymp-app}.\qed
\end{proof}

For $\ell\ge1$ we will repeatedly use the shorthand
\begin{equation}\label{eq:ahat-ell-symm-def}
  \widehat a_t^{(\ell)}
  :=\EE\big[Q_t(\widehat S_t)^\ell\big]
  =\sum_{x\in\ZZ}Q_t(x)^{\ell+1}.
\end{equation}
By Lemma~\ref{lem:lp-Qn-app},
\begin{equation}\label{eq:ahat-ell-symm-asymp}
  \widehat a_t^{(\ell)}
  \sim \kappa_{\ell+1}\,t^{-\ell/2},
  \qquad t\to\infty.
\end{equation}

%==============================================================================
\subsection{Variance regimes for general polynomial observables}
\label{app:poly-symmetric}
%==============================================================================

Let
\[
  N_\lambda\sim\mathrm{Poi}(\lambda),
\]
and write the Poisson--Charlier expansion of the centered single-site observable as
\begin{equation}\label{eq:charlier-expansion-phi-app}
  \varphi(N_\lambda)-\EE[\varphi(N_\lambda)]
  =\sum_{\ell=1}^k c_{\varphi,\ell}\,C_\ell(N_\lambda;\lambda).
\end{equation}
Define the \emph{Poisson--Charlier rank} of $\varphi$ by
\begin{equation}\label{eq:rank-def-app}
  r_\varphi:=\min\{\ell\in\{1,\dots,k\}: c_{\varphi,\ell}\neq0\}.
\end{equation}

The variance formula from Lemma~\ref{lem:Yk-Var} specializes under symmetric sampling as follows.

\begin{lemma}[Variance representation under symmetric sampling]\label{lem:variance-representation-app}
For
\[
  \widehat Y_{N,\varphi}:=\sum_{n=1}^N \varphi(\xi(n,\widehat S_n)),
\]
we have
\begin{equation}\label{eq:variance-representation-app}
  \operatorname{Var}\!\big(\widehat Y_{N,\varphi}\big)
  =
  \sum_{\ell=1}^k c_{\varphi,\ell}^2\,\ell!\,\lambda^\ell
  \Bigg(
    N+2\sum_{t=1}^{N-1}(N-t)\,\widehat a_t^{(\ell)}
  \Bigg),
\end{equation}
where $\widehat a_t^{(\ell)}$ is given by \eqref{eq:ahat-ell-symm-def}.
\end{lemma}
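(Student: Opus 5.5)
The plan is to repeat the derivation of Lemma~\ref{lem:Yk-Var} verbatim, replacing the drifted walk $S$ by $\widehat S$. None of the structural steps there used the drift. First, Lemma~\ref{lem:Yk-mean} still holds, because it only uses that $\xi(n,x)\sim\mathrm{Poi}(\lambda)$ for each fixed $(n,x)$. Hence $\EE_\xi[\widehat Y_{N,\varphi}\mid \widehat S]=N\,\EE[\varphi(N_\lambda)]$ is deterministic. The law of total variance then gives
\[
\operatorname{Var}(\widehat Y_{N,\varphi})=\EE_{\widehat S}\big[\operatorname{Var}_\xi(\widehat Y_{N,\varphi}\mid \widehat S)\big].
\]
Second, fix a realization of $\widehat S$ and set $\mathcal A_n:=C_{n,\widehat S_n}$. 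Since $\mu(\mathcal A_n)=\lambda$, Lemma~\ref{lem:single-site-k-chaos} applies to each site. Summing over $n$ gives the conditional chaos expansion \eqref{eq:Yk-chaos}, with kernels $f_{q,\widehat S}$ defined as in \eqref{eq:FqS-def}, and with the same coefficients $c_{\varphi,q}$ as in \eqref{eq:charlier-expansion-phi-app}. The Charlier expansion of $\varphi(N_\lambda)$ does not depend on the sampling walk.

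Next, orthogonality of chaoses \eqref{e:I_n} gives the analogue of \eqref{eq:VarYk-condS}. The computation leading to \eqref{F_q,s} is pure Markov-property bookkeeping for the particle walks under $\mu$, so it again yields
\[
\|f_{q,\widehat S}\|^2=\lambda^q\sum_{n,m=1}^N Q_{|m-n|}(\widehat S_m-\widehat S_n)^q=\lambda^q\big(N+2\widehat\Sigma^{(q)}\big).
\]
Taking $\EE_{\widehat S}$ and using stationarity of increments, $\widehat S_{n+t}-\widehat S_n\stackrel{d}{=}\widehat S_t$, we get
\[
\EE[\widehat\Sigma^{(q)}]=\sum_{t=1}^{N-1}(N-t)\,\EE[Q_t(\widehat S_t)^q].
\]
Because $\widehat S$ has kernel $Q_t$, this expectation equals $\sum_x Q_t(x)\,Q_t(x)^q=\sum_x Q_t(x)^{q+1}=\widehat a_t^{(q)}$, which is \eqref{eq:ahat-ell-symm-def}. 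Substituting gives \eqref{eq:variance-representation-app}, with $\ell$ in place of $q$.

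There is no real obstacle here, since every sum is finite for fixed $N$. The only points needing care are the following.
\begin{itemize}
\item The summability arguments of Lemma~\ref{lem:moments-decay-Yk} are not invoked. This statement is an exact finite-$N$ identity, so no asymptotics are needed.
\item The conditional expansion requires each $f_{q,\widehat S}\in L^2$. This holds by the same crude bound $\|f_{q,\widehat S}\|^2\le N^2\lambda^q$.
\item The identification of $\widehat a_t^{(\ell)}$ relies on $\widehat S$ being independent of the particle field and sharing its transition kernel.
\end{itemize}
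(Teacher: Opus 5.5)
Your proposal is correct and follows the paper's proof. The paper invokes the exact finite-$N$ identity \eqref{eq:sigmak-expansion} from the proof of Lemma~\ref{lem:Yk-Var}, which uses no drift, and then identifies $\widehat a_t^{(\ell)}=\EE[Q_t(\widehat S_t)^\ell]=\sum_x Q_t(x)^{\ell+1}$ because $\widehat S_t$ has law $Q_t$; you additionally spell out why each step before the asymptotic summation is drift-independent.
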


\begin{proof}
This is exactly the variance formula proved in Lemma~\ref{lem:Yk-Var}, with
\[
  \widehat a_t^{(\ell)}=\EE[Q_t(\widehat S_t)^\ell].
\]
Under $\mathfrak p=\tfrac12$, the law of $\widehat S_t$ is $Q_t(\cdot)$, hence
\[
  \widehat a_t^{(\ell)}
  =\sum_{x\in\ZZ}Q_t(x)\,Q_t(x)^\ell
  =\sum_{x\in\ZZ}Q_t(x)^{\ell+1}.
\]
This proves \eqref{eq:variance-representation-app}.\qed
\end{proof}

We now derive the three variance regimes. Recall the elementary asymptotics
\begin{align}
  \sum_{t=1}^{N-1}(N-t)t^{-1/2}&=\frac43\,N^{3/2}+O(N), \label{eq:summation-half-app}\\
  \sum_{t=1}^{N-1}(N-t)t^{-1}&=N\log N+O(N), \label{eq:summation-one-app}
\end{align}
and, for every $\alpha>1$,
\begin{equation}\label{eq:summation-alpha-app}
  \sum_{t=1}^{N-1}(N-t)t^{-\alpha}\asymp N.
\end{equation}

\begin{lemma}[Variance regimes by Charlier rank]\label{lem:variance-regimes-app}
Let $r_\varphi$ be as in \eqref{eq:rank-def-app}.
\begin{enumerate}
\item If $r_\varphi=1$, then
\begin{equation}\label{eq:rank1-variance-app}
  \operatorname{Var}\!\big(\widehat Y_{N,\varphi}\big)
  \sim
  \frac{8\,c_{\varphi,1}^2\,\lambda}{3\sqrt{\pi}}\,N^{3/2}.
\end{equation}

\item If $r_\varphi=2$, then
\begin{equation}\label{eq:rank2-variance-app}
  \operatorname{Var}\!\big(\widehat Y_{N,\varphi}\big)
  \sim
  \frac{8\,c_{\varphi,2}^2\,\lambda^2}{\pi\sqrt{3}}\,N\log N.
\end{equation}

\item If $r_\varphi\ge3$, then
\begin{equation}\label{eq:rank3plus-variance-app}
  \operatorname{Var}\!\big(\widehat Y_{N,\varphi}\big)\asymp N.
\end{equation}
\end{enumerate}
\end{lemma}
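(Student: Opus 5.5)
The plan is to work directly from the exact formula \eqref{eq:variance-representation-app} in Lemma~\ref{lem:variance-representation-app}. It writes the variance as a finite sum over $\ell\in\{r_\varphi,\dots,k\}$; terms with $\ell<r_\varphi$ vanish because $c_{\varphi,\ell}=0$. So the task reduces to finding, for each $\ell$, the growth of
\[
B_N^{(\ell)}:=\sum_{t=1}^{N-1}(N-t)\,\widehat a_t^{(\ell)},
\]
and showing that the term $\ell=r_\varphi$ dominates. By \eqref{eq:ahat-ell-symm-asymp}, $\widehat a_t^{(\ell)}\sim\kappa_{\ell+1}t^{-\ell/2}$. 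I would insert this into the elementary sums \eqref{eq:summation-half-app}--\eqref{eq:summation-alpha-app} using a standard $\varepsilon$-argument. Write $\widehat a_t^{(\ell)}=\kappa_{\ell+1}t^{-\ell/2}(1+\epsilon_t)$ with $\epsilon_t\to0$, and split the sum at $t=T_\varepsilon$. The finitely many small $t$ contribute $O(N)$. The remaining $t$ contribute within a factor $(1\pm\varepsilon)$ of the model sum, because all weights are nonnegative.

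This gives the following asymptotics:
\[
B_N^{(1)}\sim\tfrac43\kappa_2N^{3/2},\qquad B_N^{(2)}\sim\kappa_3N\log N,\qquad B_N^{(\ell)}\asymp N\ \ (\ell\ge3).
\]
The $O(N)$ error is negligible against $N^{3/2}$ and $N\log N$. For the constants, $\kappa_2=1/\sqrt{\pi}$ and $\kappa_3=2/(\pi\sqrt3)$ by Lemma~\ref{lem:lp-Qn-app}. In case (1) the $\ell=1$ term is then $c_{\varphi,1}^2\lambda\cdot2\cdot\frac43\pi^{-1/2}N^{3/2}=\frac{8c_{\varphi,1}^2\lambda}{3\sqrt\pi}N^{3/2}$. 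All terms with $\ell\ge2$ are $O(N\log N)=o(N^{3/2})$, and the diagonal $N$ terms are also lower order, which gives \eqref{eq:rank1-variance-app}. In case (2) the $\ell=2$ term is $c_{\varphi,2}^2\cdot2\lambda^2\cdot2\kappa_3N\log N=\frac{8c_{\varphi,2}^2\lambda^2}{\pi\sqrt3}N\log N$, while the terms with $\ell\ge3$ and the diagonal are $O(N)$; this gives \eqref{eq:rank2-variance-app}.

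For case (3) every summand is nonnegative. The lower bound comes from the diagonal contribution $c_{\varphi,r_\varphi}^2 r_\varphi!\lambda^{r_\varphi}N>0$. The upper bound uses $\widehat a_t^{(\ell)}\le Ct^{-3/2}$ for all $\ell\ge3$, which is summable, so each term is at most $CN$ by \eqref{eq:summation-alpha-app} (or Lemma~\ref{lem:conv-count-Yk}).

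The only mildly delicate step is the passage from the pointwise asymptotics of $\widehat a_t^{(\ell)}$ to the weighted sums, and in particular the borderline $\ell=2$ case, where the logarithm comes from the tail of the sum. There I would check that the split at $T_\varepsilon$ keeps the small-$t$ contribution at $O(N)=o(N\log N)$. I would also check that $\sum_{t\le N}(N-t)t^{-1}=N\log N+O(N)$ still holds with relative error $\varepsilon$, which follows because the tail from $T_\varepsilon$ to $N$ carries all of the $N\log N$ mass.
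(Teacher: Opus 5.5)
Your proposal is correct and follows essentially the same route as the paper. Like the paper, you insert $\widehat a_t^{(\ell)}\sim\kappa_{\ell+1}t^{-\ell/2}$ into the exact variance representation, let the $\ell=r_\varphi$ term dominate through the elementary weighted sums, and use the diagonal contribution for the lower bound when $r_\varphi\ge3$. Your constants match the paper's, and your explicit $\varepsilon$-splitting at $T_\varepsilon$ just makes rigorous the passage from pointwise to weighted-sum asymptotics that the paper's "insert" step leaves implicit.
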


\begin{proof}
Insert \eqref{eq:ahat-ell-symm-asymp} into \eqref{eq:variance-representation-app}.

If $r_\varphi=1$, then
\[
  \widehat a_t^{(1)}\sim \kappa_2 t^{-1/2},
  \qquad \kappa_2=\frac1{\sqrt{\pi}}.
\]
Hence, by \eqref{eq:summation-half-app},
\[
  2c_{\varphi,1}^2\lambda\sum_{t=1}^{N-1}(N-t)\widehat a_t^{(1)}
  \sim
  2c_{\varphi,1}^2\lambda\cdot \frac1{\sqrt{\pi}}\cdot \frac43 N^{3/2}
  =
  \frac{8c_{\varphi,1}^2\lambda}{3\sqrt{\pi}}N^{3/2}.
\]
All terms with $\ell\ge2$ are $O(N\log N)=o(N^{3/2})$, proving
\eqref{eq:rank1-variance-app}.

If $r_\varphi=2$, then $c_{\varphi,1}=0$ and
\[
  \widehat a_t^{(2)}\sim \kappa_3 t^{-1},
  \qquad
  \kappa_3=\frac{2}{\pi\sqrt{3}}.
\]
Therefore, by \eqref{eq:summation-one-app},
\[
  2\,c_{\varphi,2}^2\,2!\,\lambda^2\sum_{t=1}^{N-1}(N-t)\widehat a_t^{(2)}
  \sim
  4c_{\varphi,2}^2\lambda^2\cdot \frac{2}{\pi\sqrt{3}}\,N\log N
  =
  \frac{8c_{\varphi,2}^2\lambda^2}{\pi\sqrt{3}}\,N\log N.
\]
All terms with $\ell\ge3$ are $O(N)=o(N\log N)$, proving \eqref{eq:rank2-variance-app}.

Finally, if $r_\varphi\ge3$, then for every $\ell\ge r_\varphi$ one has
$\widehat a_t^{(\ell)}\asymp t^{-\ell/2}$ with $\ell/2>1$. Hence every summand in
\eqref{eq:variance-representation-app} is of order $N$ by \eqref{eq:summation-alpha-app}.
For the lower bound, the diagonal contribution already gives
\[
  \operatorname{Var}\!\big(\widehat Y_{N,\varphi}\big)
  \ge c_{\varphi,r_\varphi}^2\,r_\varphi!\,\lambda^{r_\varphi}\,N.
\]
Thus \eqref{eq:rank3plus-variance-app} follows.\qed
\end{proof}

\begin{remark}\label{rem:variance-regimes-app}
The symmetric regime is therefore qualitatively different from the drifted regime:
\begin{itemize}
\item rank-one observables have superlinear variance of order $N^{3/2}$;
\item rank-two observables have variance of order $N\log N$;
\item only from rank three onward does one recover linear variance growth.
\end{itemize}
In particular, the variance behavior under symmetric sampling depends on the first non-zero
Poisson--Charlier coefficient, not merely on the degree of the polynomial.
\end{remark}

%==============================================================================
\subsection{Why the conditional-variance step from the drifted proof no longer closes}
\label{app:obstruction-rank1}
%==============================================================================

We now explain why the conditional-variance step used in the proof of
Theorem~\ref{thm:Yk-main} no longer yields a decaying bound in the symmetric rank-one regime.

For $\ell\ge1$, define the path functional
\begin{equation}\label{eq:Sigma-ell-def-app}
  \widehat\Sigma_N^{(\ell)}
  :=\sum_{1\le n<m\le N}Q_{m-n}(\widehat S_m-\widehat S_n)^\ell.
\end{equation}
Then the conditional variance from \eqref{eq:VarYk-condS} may be written as
\begin{equation}\label{eq:Vhat-general-app}
  \widehat V_N
  :=\operatorname{Var}_\xi\!\big(\widehat Y_{N,\varphi}\mid \widehat S\big)
  =
  \sum_{\ell=1}^k c_{\varphi,\ell}^2\,\ell!\,\lambda^\ell
  \big(N+2\widehat\Sigma_N^{(\ell)}\big).
\end{equation}

The same overlap-count argument as in Lemma~\ref{lem:VarSigma-casek} yields the following polynomial bound.

\begin{lemma}[Fluctuation bound under symmetric sampling]\label{lem:VarSigma-symmetric-app}
For every fixed $\ell\ge1$, there exists $C_\ell<\infty$ such that
\begin{equation}\label{eq:VarSigma-symmetric-app}
  \operatorname{Var}\!\big(\widehat\Sigma_N^{(\ell)}\big)\le C_\ell\,N^3,
  \qquad N\ge1.
\end{equation}
\end{lemma}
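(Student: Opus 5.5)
We will mimic the overlap-count argument of Lemma~\ref{lem:VarSigma-casek}, replacing the exponential decay of the drifted case by the crude uniform bound $0\le Q_t(x)\le 1$ (or $Q_t(x)\le Ct^{-1/2}$ from Lemma~\ref{lem:heat-kernel-rigorous}). The bound $N^3$ is very generous, so no delicate summation will be needed.

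Write $X^{(\ell)}_{n,t}:=Q_t(\widehat S_{n+t}-\widehat S_n)^\ell$ and $U_t:=\sum_{n=1}^{N-t}X^{(\ell)}_{n,t}$, so that $\widehat\Sigma_N^{(\ell)}=\sum_{t=1}^{N-1}U_t$. As in the drifted case, $X^{(\ell)}_{n,t}$ is measurable with respect to the increments indexed by $I(n,t)=\{n+1,\dots,n+t\}$. Hence covariances vanish for disjoint blocks, and for fixed $t,s$ the number of overlapping pairs $(n,m)$ is at most $N(t+s)$, exactly as in \eqref{eq:overlap-count-Yk}. With $\widehat b_t^{(\ell)}:=\EE[Q_t(\widehat S_t)^{2\ell}]$, Cauchy--Schwarz gives
\[
  \operatorname{Var}\big(\widehat\Sigma_N^{(\ell)}\big)\le N\sum_{t,s=1}^{N-1}(t+s)\sqrt{\widehat b_t^{(\ell)}\widehat b_s^{(\ell)}}.
\]

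Now we only use $\widehat b_t^{(\ell)}\le 1$, since $Q_t\le1$. This gives the bound $N\sum_{t,s\le N}(t+s)\le 2N\cdot N\cdot N^2/2\cdot 2$, which is of order $N^4$, too crude. So we refine with $\widehat b_t^{(\ell)}=\sum_x Q_t(x)^{2\ell+1}\le Ct^{-\ell}$ by \eqref{eq:ahat-ell-symm-asymp} (or simply $\sup_xQ_t(x)^{2\ell}\le Ct^{-\ell}$). Then $\sqrt{\widehat b_t^{(\ell)}}\le Ct^{-\ell/2}\le Ct^{-1/2}$ for every $\ell\ge1$. The double sum is then bounded by
\[
  2\sum_{t=1}^{N}t^{1/2}\sum_{s=1}^N s^{-1/2}\lesssim N^{3/2}\cdot N^{1/2}=N^2,
\]
and therefore $\operatorname{Var}(\widehat\Sigma_N^{(\ell)})\le C_\ell N^3$.

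The only point requiring care is the uniform-in-$x$ heat-kernel bound for small $t$, which is absorbed into the constant as in Lemma~\ref{lem:heat-kernel-rigorous}. There is no real obstacle. The main thing to note is that, in contrast to Lemma~\ref{lem:VarSigma-casek}, the sums $\sum_t t\sqrt{\widehat b_t}$ are no longer finite, so the factor $N$ from the overlap count is multiplied by a growing double sum. This growth is what yields $N^3$, which is sharp for $\ell=1$ at the level of this argument.
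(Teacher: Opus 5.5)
Your proposal is correct and is essentially the paper's proof: the same block decomposition with overlap count $N(t+s)$, Cauchy--Schwarz with $\widehat b_t^{(\ell)}=\sum_x Q_t(x)^{2\ell+1}\lesssim t^{-\ell}\le t^{-1}$, and the estimate $\sum_{t,s\le N}(t+s)(ts)^{-1/2}\lesssim N^{3/2}\cdot N^{1/2}=N^2$, which gives $N^3$. Your intermediate arithmetic for the discarded bound $\widehat b_t^{(\ell)}\le 1$ is garbled, but it plays no role in the argument.
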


\begin{proof}
Set
\[
  X_{n,r}^{(\ell)}:=Q_r(\widehat S_{n+r}-\widehat S_n)^\ell,
  \qquad
  U_r^{(\ell)}:=\sum_{n=1}^{N-r}X_{n,r}^{(\ell)},
\]
so that $\widehat\Sigma_N^{(\ell)}=\sum_{r=1}^{N-1}U_r^{(\ell)}$.
As in Lemma~\ref{lem:VarSigma-casek}, $X_{n,r}^{(\ell)}$ is measurable with respect to the increment block
\[
  I(n,r):=\{n+1,\dots,n+r\},
\]
hence $\operatorname{Cov}(X_{n,r}^{(\ell)},X_{m,s}^{(\ell)})=0$ whenever
$I(n,r)\cap I(m,s)=\emptyset$.
For fixed $r,s$, the number of overlapping pairs $(n,m)$ is at most $N(r+s)$.

Let
\[
  b_r^{(\ell)}:=\EE\big[(X_{1,r}^{(\ell)})^2\big]
  =\EE\big[Q_r(\widehat S_r)^{2\ell}\big]
  =\sum_{x\in\ZZ}Q_r(x)^{2\ell+1}.
\]
Since $2\ell+1\ge3$, Lemma~\ref{lem:lp-Qn-app} implies $b_r^{(\ell)}\asymp r^{-\ell}$, and in
particular $b_r^{(\ell)}\lesssim r^{-1}$.
Therefore, by Cauchy--Schwarz,
\[
  |\operatorname{Cov}(X_{n,r}^{(\ell)},X_{m,s}^{(\ell)})|
  \le \sqrt{b_r^{(\ell)}b_s^{(\ell)}}
  \lesssim (rs)^{-1/2}.
\]
Summing over overlapping pairs gives
\[
  |\operatorname{Cov}(U_r^{(\ell)},U_s^{(\ell)})|
  \le C N(r+s)(rs)^{-1/2}.
\]
Hence
\[
  \operatorname{Var}(\widehat\Sigma_N^{(\ell)})
  \le \sum_{r,s=1}^{N-1}|\operatorname{Cov}(U_r^{(\ell)},U_s^{(\ell)})|
  \le C N\sum_{r,s=1}^{N-1}\frac{r+s}{\sqrt{rs}}
  \le C_\ell N^3,
\]
because
\[
  \sum_{r=1}^{N}r^{1/2}\asymp N^{3/2},
  \qquad
  \sum_{r=1}^{N}r^{-1/2}\asymp N^{1/2}.
\]
This proves \eqref{eq:VarSigma-symmetric-app}.\qed
\end{proof}

We now state the obstruction in the form relevant for the total-variance step of the first
Malliavin--Stein term.

\begin{lemma}[Rank-one obstruction for the total-variance step]\label{lem:rank1-obstruction-app}
Assume that $r_\varphi=1$, equivalently $c_{\varphi,1}\neq0$. Then
\[
  \operatorname{Var}\!\big(\widehat Y_{N,\varphi}\big)\asymp N^{3/2},
\]
and the same crude conditional-variance estimate as in the drifted proof yields only a non-decaying
bound:
\begin{equation}\label{eq:nondecay-term1-app}
  \operatorname{Var}\!\Big(\frac{\widehat V_N}{\widehat\sigma_{N,\varphi}^2}\Big)\le C_\varphi,
  \qquad
  \widehat\sigma_{N,\varphi}^2:=\operatorname{Var}\!\big(\widehat Y_{N,\varphi}\big).
\end{equation}
In particular, this estimate alone does not yield a vanishing contribution from the
conditional-variance term appearing in the total-variance control of the first
Malliavin--Stein quantity.
\end{lemma}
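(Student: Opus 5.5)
The proof has two parts: the variance asymptotics $\widehat\sigma_{N,\varphi}^2\asymp N^{3/2}$, and a bound on the fluctuations of $\widehat V_N$.

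For the first claim we invoke Lemma~\ref{lem:variance-regimes-app}(1) directly. When $r_\varphi=1$ it gives
\[
\widehat\sigma_{N,\varphi}^2\sim \frac{8c_{\varphi,1}^2\lambda}{3\sqrt\pi}N^{3/2}.
\]
Since $c_{\varphi,1}\neq0$, this yields $\widehat\sigma_{N,\varphi}^2\asymp N^{3/2}$. In particular $\widehat\sigma_{N,\varphi}^4\asymp N^{3}$ for all $N$ large, and small $N$ can be absorbed into constants because $\widehat\sigma_{N,\varphi}^2\ge c_{\varphi,1}^2\lambda N>0$ from the diagonal term.

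For \eqref{eq:nondecay-term1-app} we use the explicit representation \eqref{eq:Vhat-general-app}. The terms $c_{\varphi,\ell}^2\ell!\lambda^\ell N$ are deterministic, so only the path functionals $\widehat\Sigma_N^{(\ell)}$ fluctuate. By the elementary inequality $\operatorname{Var}(\sum_{\ell=1}^k X_\ell)\le k\sum_{\ell=1}^k\operatorname{Var}(X_\ell)$,
\[
\operatorname{Var}(\widehat V_N)\le 4k\sum_{\ell=1}^k c_{\varphi,\ell}^4(\ell!)^2\lambda^{2\ell}\operatorname{Var}\big(\widehat\Sigma_N^{(\ell)}\big).
\]
Lemma~\ref{lem:VarSigma-symmetric-app} bounds each variance on the right by $C_\ell N^3$, so $\operatorname{Var}(\widehat V_N)\le C_\varphi N^3$. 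Dividing by $\widehat\sigma_{N,\varphi}^4\asymp N^3$ gives $\operatorname{Var}(\widehat V_N/\widehat\sigma_{N,\varphi}^2)\le C_\varphi$ uniformly in $N$.

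This is precisely the total-variance step of the drifted proof, where Lemma~\ref{lem:VarVS-Yk} supplied $\operatorname{Var}_S(V_S)\le CN$ and hence a bound $O(N/\sigma_{N,\varphi}^4)=O(N^{-1})$. Here the available bound is only $O(N^3/N^3)=O(1)$, which does not decay. Since $\EE|1-\Theta|\le\sqrt{\operatorname{Var}(\Theta)}$ and the conditional-mean part of $\operatorname{Var}(\Theta)$ is $\operatorname{Var}(\widehat V_N)/\widehat\sigma_{N,\varphi}^4$, the argument only gives $\mathrm T_1=O(1)$. That is the sense of the obstruction claimed.

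No step is a genuine obstacle, since the statement only asserts that the crude estimate fails to decay. The one point needing care is that the $N^3$ in Lemma~\ref{lem:VarSigma-symmetric-app} comes from the rank-one kernel decay: the bound $\sqrt{b_r b_s}\lesssim(rs)^{-1/2}$ gives $\sum_{r,s\le N}(r+s)(rs)^{-1/2}\asymp N^2$. This power exactly matches $\widehat\sigma_{N,\varphi}^4$, which is why the ratio is bounded but not small.

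If one wanted to show that the non-decay is not merely an artefact of the estimate, one could try to lower-bound $\operatorname{Var}(\widehat\Sigma_N^{(1)})$ by $cN^3$. The idea would be to compare $\widehat\Sigma_N^{(1)}$ to a Brownian local-time functional via Donsker's theorem and the local CLT. This is not needed for the lemma as stated, and I would not attempt it here.
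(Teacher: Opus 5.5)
Your proposal is correct and follows the paper's proof essentially step for step. The paper also takes $\widehat\sigma_{N,\varphi}^4\asymp N^3$ from Lemma~\ref{lem:variance-regimes-app}(1), then bounds $\operatorname{Var}(\widehat V_N)\le C_\varphi\sum_{\ell}\operatorname{Var}(\widehat\Sigma_N^{(\ell)})\le C_\varphi N^3$ via \eqref{eq:Vhat-general-app} and Lemma~\ref{lem:VarSigma-symmetric-app}, and divides; your explicit constant in the variance-of-a-sum inequality and your handling of small $N$ through the diagonal lower bound $\widehat\sigma_{N,\varphi}^2\ge c_{\varphi,1}^2\lambda N$ are minor refinements of that same argument.
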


\begin{proof}
By Lemma~\ref{lem:variance-regimes-app}, if $r_\varphi=1$ then
\[
  \widehat\sigma_{N,\varphi}^2\asymp N^{3/2},
  \qquad
  \widehat\sigma_{N,\varphi}^4\asymp N^3.
\]
On the other hand, by \eqref{eq:Vhat-general-app} and Lemma~\ref{lem:VarSigma-symmetric-app},
\[
  \operatorname{Var}(\widehat V_N)
  \le C_\varphi\sum_{\ell=1}^k \operatorname{Var}(\widehat\Sigma_N^{(\ell)})
  \le C_\varphi N^3.
\]
Therefore
\[
  \operatorname{Var}\!\Big(\frac{\widehat V_N}{\widehat\sigma_{N,\varphi}^2}\Big)
  =
  \frac{\operatorname{Var}(\widehat V_N)}{\widehat\sigma_{N,\varphi}^4}
  \le C_\varphi.
\]
This is exactly the size produced by repeating the crude conditional-variance part of the
total-variance estimate used for the first Malliavin--Stein term in the drifted case. Since the
right-hand side does not vanish as $N\to\infty$, that estimate alone no longer closes.\qed
\end{proof}

\begin{remark}\label{rem:not-no-clt-app}
Lemma~\ref{lem:rank1-obstruction-app} does not prove that a central limit theorem
fails under symmetric sampling. It only shows that the conditional-variance step used in Section~\ref{sec:Yk} no longer yields a decaying estimate in the rank-one symmetric regime.
\end{remark}

\begin{remark}\label{rem:higher-rank-obstruction-app}
The bound \eqref{eq:VarSigma-symmetric-app} is in fact too weak to close the same
total-variance step for higher ranks as well; the rank-one case is highlighted here because it is
the most natural and already exhibits superlinear variance growth.
\end{remark}

%==============================================================================
\subsection{Examples}
\label{app:examples-symmetric}
%==============================================================================

We conclude by recording several special cases of
Lemma~\ref{lem:variance-regimes-app}.

\begin{example}[The linear observable]\label{ex:linear-symmetric-app}
Let $\varphi(x)=x$. Then $c_{\varphi,1}=1$, so $r_\varphi=1$. Hence
\begin{equation}\label{eq:Y1-var-symmetric-app}
  \operatorname{Var}\!\big(\widehat Y_{N,x}\big)
  \sim \frac{8\lambda}{3\sqrt{\pi}}\,N^{3/2}.
\end{equation}
In fact, in this special case one can sharpen the remainder by using the exact identity
\[
  \sum_{x\in\ZZ}Q_t(x)^2 = Q_{2t}(0),
\]
together with Stirling's formula for the central binomial coefficients.
\end{example}

\begin{example}[The quadratic monomial]\label{ex:quadratic-symmetric-app}
Let $\varphi(x)=x^2$. Since
\[
  x^2-\EE[\mathrm{Poi}(\lambda)^2]=(2\lambda+1)C_1(x;\lambda)+C_2(x;\lambda),
\]
we have
\[
  c_{\varphi,1}=2\lambda+1,\qquad c_{\varphi,2}=1,
\]
and therefore $r_\varphi=1$. Lemma~\ref{lem:variance-regimes-app} yields
\begin{equation}\label{eq:Y2-var-symmetric-app}
  \operatorname{Var}\!\big(\widehat Y_{N,x^2}\big)
  \sim
  \frac{8(2\lambda+1)^2\lambda}{3\sqrt{\pi}}\,N^{3/2}.
\end{equation}
\end{example}

\begin{example}[A rank-two observable]\label{ex:rank2-symmetric-app}
Let
\[
  \varphi(x):=C_2(x;\lambda)=x^2-(2\lambda+1)x+\lambda^2.
\]
Then $c_{\varphi,1}=0$ and $c_{\varphi,2}=1$, so $r_\varphi=2$. Hence
\[
  \operatorname{Var}\!\Big(\sum_{n=1}^N C_2(\xi(n,\widehat S_n);\lambda)\Big)
  \sim
  \frac{8\lambda^2}{\pi\sqrt{3}}\,N\log N.
\]
This shows that the symmetric regime exhibits genuinely different variance classes even within the
family of degree-two polynomial observables.
\end{example}

\begin{remark}\label{rem:rank3plus-example-app}
If one takes instead $\varphi(x)=C_3(x;\lambda)$, then $r_\varphi=3$, and
Lemma~\ref{lem:variance-regimes-app} yields linear variance growth:
\[
  \operatorname{Var}\!\Big(\sum_{n=1}^N C_3(\xi(n,\widehat S_n);\lambda)\Big)\asymp N.
\]
Thus the symmetric regime does not lead to a single universal normalization across all polynomial
observables.
\end{remark}

{\bf Acknowlegement} Rang was partially supported by the National Natural Science Foundation of China (Nos.12131019 and 11971361). Su was partly
supported by the National Natural Science Foundation of China (Nos.12271475 and U23A2064).

\normalem

\bibliographystyle{plain}
\bibliography{main}

\end{document}